\theoremstyle{plain}
\newtheorem{thm}{Theorem}
\newtheorem{proposition}{Proposition}
\newtheorem{lemma}{Lemma}
\newtheorem{definition}{Definition}
\providecommand{\E}{\mathbb{E}}
\providecommand{\cB}{\mathcal{B}}
\providecommand{\cA}{\mathcal{A}}
\providecommand{\R}{\mathbb{R}}
\providecommand{\scrP}{\mathscr{P}}
\providecommand{\calP}{\mathcal{P}}
\newcommand{\bcalP}{\widehat{\mathcal{P}}}
\providecommand{\supp}{\mathrm{supp}\;}
\providecommand{\Lip}{\mathrm{Lip}}
\providecommand{\wass}{\mathsf{W}}
\providecommand{\norm}[1]{\lVert#1\rVert}
\providecommand{\homega}{\widehat{\omega}}
\providecommand{\hpi}{\widehat{\pi}}
\providecommand{\hOmega}{\widehat{\Omega}}
\providecommand{\profile}{\mathsf{F}}
\providecommand{\limsup}{\lim\sup}
\providecommand{\ind}{\mathbb{I}}
\newcommand{\TV}{\mathsf{TV}}
\DeclareMathOperator*{\argmin}{arg\,min}
\definecolor{longhorn}{rgb}{0.8, 0.33, 0.0}
\begin{document}

\title{Robust Hypothesis Testing with Wasserstein Uncertainty Sets}
\date{}

\author[1]{ Liyan Xie}
\author[2]{ Rui Gao}
\author[1]{ Yao Xie}

\affil[1]{\footnotesize School of Industrial and Systems Engineering, Georgia Institute of Technology}
\affil[2]{\footnotesize Department of Information, Risk and Operations Management, University of Texas at Austin}

\maketitle
\date{}

\begin{abstract}
We consider a data-driven robust hypothesis test where the optimal test will minimize the worst-case performance regarding distributions that are close to the empirical distributions with respect to the Wasserstein distance. This leads to a new non-parametric hypothesis testing framework based on distributionally robust optimization, which is more robust when there are limited samples for one or both hypotheses. Such a scenario often arises from applications such as health care, online change-point detection, and anomaly detection. We study the computational and statistical properties of the proposed test by presenting a tractable convex reformulation of the original infinite-dimensional variational problem exploiting Wasserstein's properties and characterizing the radii selection for the uncertainty sets. We also demonstrate the good performance of our method on synthetic and real data.
\end{abstract}




\section{Introduction}

Hypothesis testing is a fundamental problem in statistics and an essential building block for machine learning problems such as classification and anomaly detection. The goal of hypothesis testing is to find a decision rule to discriminate between two hypotheses given new data while achieving a small probability of errors. However, the exact optimal test is difficult to obtain when the underlying distributions are unknown. This issue is particularly challenging when the number of samples is limited, and we cannot obtain accurate estimations of the distributions. The limited sample scenario (for one or both hypotheses) commonly arises in many real-world applications such as medical imaging diagnosis \cite{aresta2019bach}, online change-point detection \cite{poor2008quickest}, and online anomaly detection \cite{chandola2010anomaly}.

\subsection{Why distribution-free minimax test}

For hypothesis testing, the well-known Neyman-Pearson Lemma \cite{neyman1933ix} establishes that the likelihood ratio gives the optimal test for two simple hypotheses. This requires to specify {\it a priori} two true distribution functions $P_1$ and $P_2$ for the two hypotheses, which, however, are usually unknown in practice. When the assumed distributions deviate from true distributions, the likelihood ratio test may experience a significant performance loss.

Typically there are ``training'' samples available for both hypotheses. A commonly used approach is the generalized likelihood ratio test (GLRT), which assumes parametric forms for the distributions and estimates parameters using data and plug into the likelihood ratio statistic. Another popular method is the density ratio estimation \cite{DensityRatio2012}. However, in many scenarios, the training samples for one or both hypotheses can be small. For instance, we tend to have a small sample size for patients in healthcare applications. In limited-sample scenarios, it can be challenging to estimate parameters for GLRT (especially in the high dimensional case) or to estimate density ratios accurately. Without reliable estimation of the underlying distributions, various forms of robust hypothesis testing \cite{huber1965robust, huber1973minimax,Levy2009,GulZoubir2017} have been developed by considering different ``uncertainty sets''. Huber's seminar work \cite{huber1965robust} sets the uncertainty set as the $\epsilon$-contamination sets that contain distributions close to a nominal distribution defined by total-variation distance. In \cite{huber1973minimax}, the optimal tests are characterized under majorization conditions, which, however, are intractable in general. Thus, there remains a computational challenge to find the optimal test, especially when the data is \textit{multi-dimensional}. This has become a significant obstacle in applying robust hypothesis tests in practice.

We consider a setting where the sample size is small. When there are limited samples, the empirical distribution may have ``holes'' in the sample space: places where we do not have samples yet, but there is a non-negligible probability for the data to occur, as illustrated in Figure~\ref{fig:LFD_illustration}. Thus, we may not want to restrict the true distribution to be on the same support of the empirical distribution. However, many commonly used distance divergences for probability distributions, such as Kullback-Leibler divergence,  are defined for distributions with common support. Thus, in our setting, it can be restrictive if we were to construct uncertainty sets using the Kullback-Leibler divergence (e.g., \cite{Levy2009} and \cite{GulZoubir2017}). Similarly, total-variational norm-induced uncertainty sets will have this issue since they encourage distributions with the same support as the nominal distribution. This motivates us to consider an uncertainty set formed by the Wasserstein distance. It measures the distance between distributions using optimal transport metric, which is more suitable for distributions without common support.  

\begin{figure}[!ht]
\begin{center}
\includegraphics[width=1\textwidth]{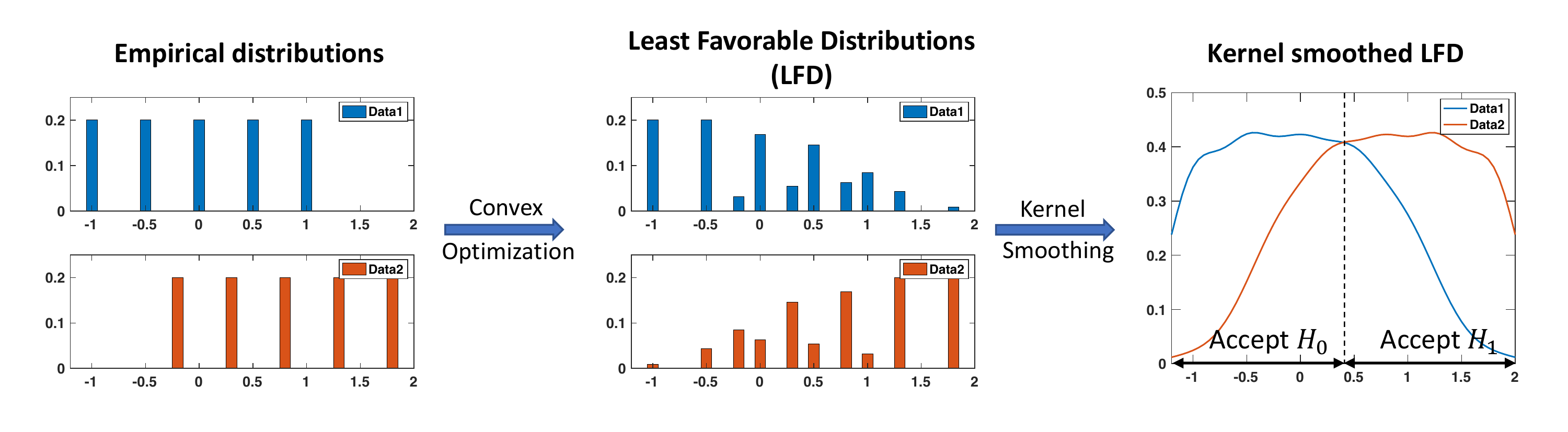}
\captionof{figure}{\small Left: Empirical distributions of two sets of training samples (5 samples each), generated from $\mathcal N(0,1)$ and $\mathcal N(2,1.2)$, respectively. Middle: Least Favorable Distributions (LFD) solve from Lemma~\ref{lemma:wass vs wass} with radius equal to $0.1$. Right: Kernel smoothed versions of LFD (with kernel bandwidth $h =0.3$). }
\label{fig:LFD_illustration}
\end{center}
\end{figure}

\subsection{Contributions}

In this paper, we present a new non-parametric minimax hypothesis test assuming the distributions under each hypothesis belong to two disjoint ``uncertainty sets'' constructed using the Wasserstein distance. Specifically, the uncertainty sets contain all distributions close to the empirical distributions formed by the training samples in Wasserstein distance. This approach is more robust in small-sample-size regimes when we cannot estimate the true data-generating distributions accurately.

A notable feature of our approach is the computational tractability and explicit characterization of the optimal test. The optimal test is based on a pair of least favorable distributions (LFD) from the uncertainty sets, which is a reminiscence of Huber's robust test. However, here the optimal test form is different, and our LFDs are computationally tractable in general.  An outstanding challenge in finding the minimax test is that we face an infinite-dimensional optimization problem (finding the saddle point for optimal test and LFDs), which is hard to solve in general. To tackle the challenge, we make a connection to recent advances in distributionally robust optimization. In particular, we decouple the original minimax problem into two sub-problems using strong duality, which enable us first to find the optimal test for a given pair of distribution $P_1$ and $P_2$, and then find the LFDs $P^\ast_1$ and $P^\ast_2$ by solving a finite-dimensional convex optimization problem. We further characterize the robust optimal test and extend the test to the ``batch'' setting containing multiple test samples.
 
We also characterize the radii choice of the uncertainty sets, which is an important question that affects the optimal test's generalization property. We prove a theoretical upper bound for the sufficient radii based on the so-called profile function that is defined as the minimum Wasserstein distance between the empirical distributions and distributions that yields the same test as the oracle one. Compared with the commonly used approach in distributionally robust optimization -- the uncertainty set must contain the true distribution, our results shows a matching order that can be attained in worst-case. On the other hand, our results show the advantage of providing the explicit constant term that depends on the densities of the underlying true data-generating distributions.  

Finally, we show our method's good performance using simulated and real data, and demonstrate its applicability for sequential human activity detection.

\subsection{Related work}

Robust hypothesis testing has been developed under the minimax framework by considering various forms of ``uncertainty sets''. Seminal work by Huber \cite{huber1965robust} considers the $\epsilon$-contamination sets that contain distributions close to a nominal distribution defined by total-variation distance. Huber and Strassen later generalized the results in \cite{huber1973minimax} based on the observation that the $\epsilon$-contamination sets can be described using the so-called alternating capacities. It is claimed that under this capacity assumption, there is a representative pair (namely the LFDs) such that the Neyman-Pearson test between this pair is minimax optimal. 
Although Huber provides an explicit characterization of the robust hypothesis test in the form of a truncated likelihood ratio, the ``capacities'' condition is required to obtain the optimality result; the LFDs are difficult to obtain in general. Our result is consistent with \cite{huber1965robust} in that our robust test also depends on the least favorable distributions, but we find the LFDs from data by solving a tractable optimization problem.  

More recently, \cite{Levy2009} and \cite{GulZoubir2017} consider uncertainty sets induced by Kullback-Leibler (KL) divergence in the one-dimensional setting without specifying parametric forms; the optimal test is obtained using the strong duality of problem induced by the KL divergence. Aiming to develop a computationally efficient procedure, \cite{nemirovski2015,guigues2016change} consider a convex optimization framework for hypothesis testing, assuming parametric forms for the distributions and the parameters under the null and the alternative hypothesis belong to convex sets. We consider a new way to construct uncertainty sets using Wasserstein metrics and empirical distributions to achieve distributional robustness. Using Wasserstein metric to achieve robustness is a popular technique and has been applied to many areas, including computer vision \cite{rubner2000earth,levina2001earth,rabin2011wasserstein}, generative adversarial networks \cite{arjovsky2017wasserstein,gulrajani2017improved}, and two-sample test \cite{ramdas2017wasserstein}.
    
Our work is also closely related to the Wasserstein distributionally robust optimization (DRO) \cite{esfahani2015data,blanchet2019quantifying,gao2016distributionally,sinha2017certifiable,shafieezadeh2015distributionally}. However, existing DRO problems typically involve only one class of empirical samples, but our problem involves two classes. Hence we cannot rely on existing strong duality results in DRO \cite{blanchet2019quantifying,esfahani2015data,gao2016distributionally} to obtain our results. Besides, we provide new insights regarding our solution's structural properties that are different from those that occurred in other DRO problems. 
Similarly, the line of work in DRO which aims to characterize the size of uncertainty set focuses on a single uncertainty set, including asymptotic results in the finite-dimensional parametric case \cite{blanchet2019robust} and infinite-dimensional case  \cite{si2020quantifying}, as well as non-asymptotic bound \cite{esfahani2015data,shafieezadeh2019regularization,gao2020finite}.
We adopt a similar principle as in \cite{blanchet2019robust,si2020quantifying} but develop different analysis for the case of two uncertainty sets.

\subsection{Organization}

The remainder of the paper is organized as follows. Section~\ref{sec:setup} sets up the problem. Section~\ref{sec:convex detector} presents the optimal test. Section~\ref{sec:radius} characterizes the selection of the radii of the uncertainty sets.  Section~\ref{sec:experiment} demonstrates our robust tests' good performance using both synthetic and real data. Finally, Section~\ref{sec:conc} concludes the paper with some discussions. We delegate all proofs to the appendix. 
 
\section{Wasserstein Minimax Test}\label{sec:setup}
    
Let $\Omega \subset \R^d$ be the sample space, where $d$ is the data dimension. Denote $\scrP(\Omega)$ as the set of Borel probability measures on $\Omega$. 
Given $P_1,P_2\in\scrP(\Omega)$, the \emph{simple hypothesis test} decides whether a given test sample $\omega$ is from $P_1$ or $P_2$. 
In many practical situations, $P_1,P_2$ are not exactly known, but instead we have access to $n_1$ and $n_2$ i.i.d. training samples following distributions $P_1$ and $P_2$, respectively. Denote the two sets of training samples as $\hOmega_k=\{\homega_k^1,\ldots,\homega_k^{n_k}\}$, $k=1,2$, and define empirical distributions constructed using training data sets as
\[Q_k = \frac{1}{n_k}\sum_{i=1}^{n_k} \delta_{\homega_k^i}, \ k=1,2.
\]  Here $\delta_\omega$ denotes the Dirac point mass concentrated on $\omega \in \Omega$.

To capture the distributional uncertainty, we consider \emph{composite hypothesis test} of the form:
\[
\begin{split}
H_0 & ~: \quad \omega \sim P_1, \quad P_1 \in \calP_1; \\
H_1 & ~: \quad \omega \sim P_2, \quad P_2 \in \calP_2,
\end{split}
\]
where $\calP_1,\calP_2$ are collections of relevant probability distributions. In particular, we will consider them to be Wasserstein uncertainty sets.
Below we describe our problem setup. 
  



\subsection{Randomized test}

We consider the set of all {\it randomized tests} defined as follows \cite{Arkadi2019Statistical}.
\begin{definition}[Randomized test]
Given hypotheses $H_0,H_1$, a randomized test is any Borel measurable function $\pi: \Omega \rightarrow [0,1]$ which, for any observation $\omega \in \Omega$, accepts the hypothesis $H_0$ with probability $\pi(\omega)$ and $H_1$ with probability $1-\pi(\omega)$.
\end{definition}
In the randomized test, the decision to accept a hypothesis can be a random selection based on the function $\pi(\omega)$. Thus, the usual deterministic test (e.g., considered in \cite{gao2018robust}) is a special case by setting $\pi(\omega)\in\{0,1\}$ and the randomized test is more general. 

For a simple hypothesis test with hypotheses $P_1$ and $P_2$, we define the {\it risk} of a randomized test $\pi$ as the summation of Type-I and Type-II errors:
\begin{equation}\label{eq:risk}
\Phi(\pi;P_1,P_2):= \mathbb E_{P_1}[1-\pi(\omega)] + \mathbb E_{P_2}[\pi(\omega)].
\end{equation}
Here we consider equal weights on the Type-I and Type-II errors; other weighted combinations can be addressed similarly.

\subsection{Wasserstein minimax formulation} 

The minimax hypothesis test finds the optimal test that minimizes the {\it worst-case risk} over all possible distributions in the composite hypotheses: 
\[\label{problem:HT_detector}
\inf_{\pi} \sup_{P_1\in\calP_1,P_2\in\calP_2}  \Phi(\pi;P_1,P_2).
\]
The resulting worst-case solution $P_1^*,P_2^*$ are called the \emph{least favorable distributions} (LFDs) in the classical robust hypothesis test literature \cite{huber1965robust,huber1973minimax}.

In this paper, we consider uncertainty sets based on the \emph{Wasserstein metric}, defined as:
\[
\wass(P,Q) := \min_{\gamma\in\Gamma(P,Q)} \left\{\E_{(\omega,\omega')\sim\gamma} \left[c(\omega,\omega')\right]  \right\},
\]  
where $c(\cdot,\cdot): \Omega \times \Omega \rightarrow \mathbb R_+$ is a metric on $\Omega$, and $\Gamma(P,Q)$ is the collection of all Borel probability measures on $\Omega\times\Omega$ with marginal distributions $P$ and $Q$. 
Define the Wasserstein uncertainty sets $\calP_1,\calP_2$ as Wasserstein balls centering at two empirical distributions:
\begin{equation}\label{problem:wass vs wass}
\calP_k := \{P_k\in\scrP(\Omega): \ \wass(P_k,Q_k) \leq \theta_k\}, \quad k = 1,2,
\end{equation}
where $\theta_1,\theta_2>0$ specify the radii of the uncertainty sets. 

\subsection{Comparison with Huber's censored likelihood ratio test} 

Huber's seminal work
\cite{huber1965robust} considered a deterministic minimax test with uncertainty sets referred to as $\epsilon$-contamination sets: 
\[
\calP_k = \{ (1-\epsilon_k)p_k + \epsilon_k f_k, \ f_k \in \scrP(\Omega)\},
\] 
where $\epsilon_k\in(0,1)$, $p_k$ is the nominal density function, and $f_k$ is the density that can be viewed as the perturbation, $k= 1,2$. Huber proved that the optimal test in this setting is a censored version of the likelihood ratio test, with censoring thresholds $c',c''$, and the LFDs are given by:
\[
\begin{aligned}
& q_1(x) = \begin{cases} (1-\epsilon_1)p_1(x) & p_2(x)/p_1(x) < c'' \\
\frac1{c''}(1-\epsilon_1)p_2(x) & p_2(x)/p_1(x) \geq c'' 
\end{cases}; \\  
& q_2(x) = \begin{cases} (1-\epsilon_2)p_2(x) & p_2(x)/p_1(x) > c' \\
c'(1-\epsilon_2)p_1(x) & p_2(x)/p_1(x) \leq c' 
\end{cases}.
\end{aligned}
\]
Huber assumed the exact knowledge of the nominal distributions $p_1$ and $p_2$. This is different from our setting, where we only have limited samples from each hypothesis. A simple observation is that if we set $p_k$ to be the empirical distribution, then the ratio $p_2(x)/p_1(x)$ will be $\infty$ on $\hOmega_2\setminus \hOmega_1$ and $0$ on $\hOmega_1\setminus \hOmega_2$. In such a case, the LFDs proposed by Huber are degenerate 
\[
q_1(x) = \begin{cases} (1-\epsilon_1)/n_1 & x \in \hOmega_1 \\
\epsilon_1/n_2 & x \in \hOmega_2  
\end{cases}; \quad
q_2(x) = \begin{cases} (1-\epsilon_2)/n_2 & x \in \hOmega_2 \\
\epsilon_2/n_1 & x \in \hOmega_1
\end{cases},
\] 
which do not lead to any meaningful test.

\section{Tractable Convex Reformulation and Optimal Test}\label{sec:convex detector}

The saddle point problem  \eqref{problem:HT_detector} for the Wasserstein minimax test is an infinite-dimensional variational problem, which in the original form does not amend to any tractable solution. In this section, we derive a finite-dimensional convex reformulation for finding the optimal test.

We will show the following strong duality result, which means we can exchange the order of infimum and supremum in our problem:
\begin{equation}\label{eq:duality}    
\inf_{\pi} \sup_{P_1\in\calP_1,P_2\in\calP_2}  \Phi(\pi;P_1,P_2) = \sup_{P_1\in\calP_1,P_2\in\calP_2} \inf_{\pi} \Phi(\pi;P_1,P_2).
\end{equation}
This is essential in leading to closed-form expression for the optimal test and convex reformulation in solving the LFDs. 
Our proof strategy is as follows. 
First, in Section \ref{sec:optimal_detector}, we derive a closed-form expression of the optimal test for the simple hypothesis problem $ \inf_{\pi} \Phi(\pi;P_1,P_2)$.
Next in Section \ref{sec:reformulation}, we develop a convex reformulation of the $\sup\inf$ problem on the right-hand side of \eqref{eq:duality}, whose optimal solution gives the LFDs that are supported on the empirical data points. A byproduct of our analysis specifies the optimal test on empirical data points.
Finally, in Section \ref{sec:optimal_test}, we construct the optimal minimax test for the original formulation (left-hand side of \eqref{eq:duality}).
At the core of our analysis is proving that the optimal test can be found by extending the optimal test on the empirical data points to the entire space.

Note that here we cannot directly rely on existing tools such as Sion's minimax theorem \cite{sion1958general}, because (i) the space of all randomized tests is not endowed with a linear topological structure and, (ii) Wasserstein ball is not compact in the space $\scrP(\Omega)$ since $\Omega$ may not be compact. 
      



\subsection{Optimal test for simple hypothesis test}\label{sec:optimal_detector}

Let us start by considering the simple hypothesis test for given $P_1,P_2\in\scrP(\Omega)$, the inner minimization in the right-hand side of (\ref{eq:duality}):
\begin{equation}\label{eq:inf_phi}
\inf_{\pi}\; \Phi(\pi;P_1,P_2).
\end{equation}
Define the total variation distance between two distributions $P_1$ and $P_2$ as
$\TV(P_1,P_2):= (1/2) \int_\Omega |dP_1(\omega) - dP_2(\omega)|.$ 
The following Lemma gives a closed-form expression for the optimal test, which resembles a randomized version of the Neyman-Pearson Lemma. The proof is provided in Appendix \ref{proof:inf_phi}.
%
\begin{lemma}\label{lemma:inf_phi}
Let $p_1(\omega):=\frac{dP_1}{d(P_1+P_2)}(\omega)$.
The test 
\[
  \pi(\omega) = \begin{cases} 1, & \textnormal{if } p_1(\omega) > 1/2, \\
  0, & \textnormal{if }p_1(\omega) < 1/2,\\
  \textnormal{any real number in }[0,1], & \textnormal{otherwise},
  \end{cases}
\]
is optimal for \eqref{eq:inf_phi} with the risk:
\begin{equation}\label{eq:opt_test}
  \psi(P_1,P_2) :=\int_{\Omega}\min\{p_1(\omega),1-p_1(\omega)\}\, d(P_1+P_2)(\omega) = 1-\TV(P_1, P_2).
\end{equation}
\end{lemma}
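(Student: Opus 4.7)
The plan is to reduce the minimization of $\Phi(\pi; P_1, P_2)$ over all Borel measurable $\pi : \Omega \to [0,1]$ to a pointwise minimization of a linear functional in $\pi(\omega)$ against a common dominating measure. The natural dominating measure here is $\mu := P_1 + P_2$, which both $P_1$ and $P_2$ are absolutely continuous with respect to. By Radon--Nikodym, $dP_1 = p_1\, d\mu$ and $dP_2 = (1 - p_1)\, d\mu$, where $p_1 = dP_1/d(P_1+P_2)$ takes values in $[0,1]$, which is exactly the quantity defined in the lemma statement.

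First, I would rewrite the risk as a single integral with respect to $\mu$:
\[
\Phi(\pi; P_1, P_2) = \int_\Omega p_1(\omega)\, d\mu(\omega) + \int_\Omega \pi(\omega)\,\bigl(1 - 2 p_1(\omega)\bigr)\, d\mu(\omega).
\]
Since $\pi$ may be chosen independently at each $\omega$ subject only to $\pi(\omega) \in [0,1]$ and Borel measurability, the infimum is attained by the pointwise rule that sets $\pi(\omega) = 1$ when the coefficient $1 - 2 p_1(\omega)$ is negative, $\pi(\omega) = 0$ when it is positive, and arbitrary on the tie set $\{p_1 = 1/2\}$. This is exactly the test in the lemma. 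A short measurability remark is needed to confirm that the indicator $\mathbb{I}\{p_1 > 1/2\}$ is Borel measurable, which follows because $p_1$ is a Borel measurable density.

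Substituting this optimal $\pi$ back yields the pointwise minimum $\min\{p_1(\omega), 1 - p_1(\omega)\}$, giving the first expression for $\psi(P_1, P_2)$ in \eqref{eq:opt_test}. To finish, I would verify the identity with total variation by using $\min\{a, 1-a\} = \tfrac{1}{2} - \tfrac{1}{2}|2a - 1|$ with $a = p_1(\omega)$, and noting $\int d\mu = 2$ together with
\[
\TV(P_1, P_2) = \tfrac{1}{2}\int_\Omega |dP_1 - dP_2| = \tfrac{1}{2}\int_\Omega |2 p_1(\omega) - 1|\, d\mu(\omega).
\]
Combining these gives $\int \min\{p_1, 1 - p_1\}\, d\mu = 1 - \TV(P_1, P_2)$.

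There is no genuine obstacle here: the lemma is essentially a re-expression of the classical Neyman--Pearson/Bayes-risk identity in the randomized, equal-weight setting, and the only subtlety worth flagging is the need to work with the common dominating measure $P_1 + P_2$ rather than Lebesgue measure, because $P_1$ and $P_2$ need not be mutually absolutely continuous (indeed, in the later application they are empirical measures with possibly disjoint supports).
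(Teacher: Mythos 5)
Your proof is correct and follows essentially the same approach as the paper: rewrite the risk as an integral against the dominating measure $P_1+P_2$, minimize pointwise in $\pi(\omega)\in[0,1]$, and substitute back to obtain $\int\min\{p_1,1-p_1\}\,d(P_1+P_2)$. The paper invokes a formal interchangeability lemma (Lemma~\ref{lem:change}) to justify swapping $\inf$ and $\int$, whereas you argue this directly; you also explicitly verify the total-variation identity at the end, which the paper leaves implicit.
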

Lemma \ref{lemma:inf_phi} shows that the optimal test for the simple hypothesis takes a similar form as the likelihood ratio test that accepts the hypothesis with a higher likelihood and breaks the tie arbitrarily. An important observation from the lemma is that the risk only depends on the {\it common} support of the two distributions, defined as $\Omega_0(P_1,P_2) := \big\{\omega\in\Omega: \textstyle 0<p_1(\omega) < 1\big\}$, on which $P_1$ and $P_2$ are absolutely continuous with respect to each other. In particular, if the supports of $P_1,P_2$ have measure-zero overlap, then $\inf_\pi \Phi(\pi;P_1,P_2)$ equals to zero --- the optimal test for two non-overlapping distributions $P_1,P_2$ has zero risk.


%

\subsection{Least favorable distributions}\label{sec:reformulation}
 
Now we continue with finding the LFDs given the form of the optimal test in Lemma \ref{lemma:inf_phi}, which corresponds to the remaining supermum part of the right-hand side of (\ref{eq:duality}):
\begin{equation}\label{problem:sup}
\sup_{P_1\in\calP_1,P_2\in\calP_2} \psi(P_1,P_2). 
\end{equation}
Note that from the definition of $\psi$ in (\ref{eq:opt_test}), the risk associated with the optimal test, the problem of finding LFDs admits a clear statistical interpretation: the LFDs correspond to a pair of distributions in the uncertainty sets that are closest to each other in the total variation distance. 

To tackle the infinite-dimensional variational problem \eqref{problem:sup}, let us first discuss some structural properties of the LFDs that will lead to a finite-dimensional convex reformulation. Consider a toy example where $Q_1 = \delta_{\homega_1}, Q_2 = \delta_{\homega_2}$, i.e., there is only one sample in each training data set. The goal of solving LFDs can be understood as moving part of the probability mass on $\homega_1$ and $\homega_2$ to other places such that the objective function $\psi(P_1,P_2)$ is maximized. 
Note that, to find the LFDs, we need to 
(i) move the probability mass such that $P_1$ and $P_2$ overlap as much as possible, since the objective value $\psi(P_1,P_2)$ depends only on the common support; (ii) then if we were to move $p_k$ from $\homega_k$ to a common point $\omega\in\Omega$, $k=1,2$, in the least favorable way, then we solve $\min_{\omega\in\Omega} [p_1c(\omega,\homega_1) + p_2 c(\omega,\homega_2)]$ by the definition of the Wasserstein metric. 
From the triangle inequality satisfied by the metric $c(\cdot,\cdot)$, we need $\omega$ to be on the linear segment connecting $\homega_1$ and $\homega_2$ and in fact, it has to be one of the endpoints $\homega_1$ or $\homega_2$. More generally, one can generalize this argument, and there exist LFDs supported on the empirical observations.

The following lemma shows that the LFDs can be solved via a finite-dimensional convex optimization problem. 
The proof is provided in Appendix \ref{proof:wass vs wass}. For simplicity, define the total number of observations  $n:=n_1+n_2$ and the union of observations from both hypotheses 
\[\hOmega:=\hOmega_1 \cup \hOmega_2.\] Without causing confusions, we re-label the samples in $\hOmega$ as $\{\homega^1,\ldots,\homega^n\}$. 
\begin{lemma}[LFDs]\label{lemma:wass vs wass}
The LFD problem in \eqref{problem:sup} can be reformulated as the following finite-dimensional convex program
 \begin{equation}\label{eq:wass vs wass:dual}
  \begin{aligned}
    &&\max_{\substack{p_1,p_2\in\R_+^{n}\\\gamma_1,\gamma_2\in\R_+^{n\times n}}}\!         &&& \!
     \sum_{l=1}^{n} \min\big\{\textstyle p_1^l, p_2^l \displaystyle\big\}  \\
    && \mbox{{\rm subject to\; }} &&& \sum_{l=1}^{n}\sum_{m=1}^{n} \gamma_{k, l, m} c(\homega^l,\homega^m)\leq \theta_k,\; k=1,2; \\
    &&&&& \sum_{m=1}^{n}\gamma_{k, l, m}= Q_{k}^l,\ 1\leq l \leq n, k = 1,2;\\
            && &&& \sum_{l=1}^{n}\gamma_{k, l, m}=p_k^m, \ 1\leq m\leq n,\;k=1,2.
  \end{aligned}
\end{equation}
\end{lemma}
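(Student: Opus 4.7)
The plan is to recast $\psi$ in a form that couples the two Wasserstein constraints through an auxiliary common-mass measure, then prove a support-reduction step that forces the LFDs onto $\hOmega$, and finally recognize the resulting problem as \eqref{eq:wass vs wass:dual}. The hard part will be the support reduction: the two Wasserstein constraints are decoupled from one another, but the objective $\int\min(dP_1,dP_2)$ depends on the joint behavior of $P_1$ and $P_2$, so the reduction must simultaneously respect both transport budgets.

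First I would apply Lemma~\ref{lemma:inf_phi} to rewrite
$\psi(P_1,P_2)=1-\TV(P_1,P_2)=\int_\Omega \min(dP_1,dP_2)=\sup\{\alpha(\Omega):\alpha\le P_1,\ \alpha\le P_2\}$,
where the supremum is taken over (sub-probability) Borel measures $\alpha$ on $\Omega$. Unpacking the Wasserstein constraints via the definition of $\wass$, the LFD problem \eqref{problem:sup} becomes
\begin{equation*}
\sup\ \alpha(\Omega)\quad \text{s.t.}\quad \gamma_k\in\Gamma(Q_k,P_k),\ \int c\,d\gamma_k\le\theta_k,\ \alpha\le P_k,\ k=1,2,
\end{equation*}
with free variables $\alpha,P_1,P_2,\gamma_1,\gamma_2$.

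The central step is showing that an optimal tuple can always be chosen with $\alpha,P_1,P_2$ supported on $\hOmega$. Given any feasible tuple, I would disintegrate each transport plan $\gamma_k$ as $dQ_k(\homega)\,K_k(\homega,d\omega')$, then lift $\alpha$ back through $K_k$ (using the Radon--Nikodym derivative $d\alpha/dP_k\in[0,1]$) to obtain sub-measures $\widetilde\alpha_k\le\gamma_k$ on $\hOmega_k\times\Omega$, both of which marginalize to $\alpha$ in the second coordinate. The gluing lemma from optimal transport then combines $\widetilde\alpha_1,\widetilde\alpha_2$ into a measure $\widetilde\alpha$ on $\hOmega_1\times\hOmega_2\times\Omega$ whose pairwise projections recover them. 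For each atom $(\homega_1^i,\homega_2^j,\omega)$ of $\widetilde\alpha$, the triangle inequality $c(\homega_1^i,\homega_2^j)\le c(\homega_1^i,\omega)+c(\homega_2^j,\omega)$ allows redistributing the common mass at $\omega$ between the two endpoints $\homega_1^i,\homega_2^j$ in some ratio $t:1-t$; this corresponds to routing a fraction $1-t$ of the class-1 mass directly to $\homega_2^j$ and a fraction $t$ of the class-2 mass directly to $\homega_1^i$. A direct check shows $t\in[0,1]$ can be chosen so that neither class-$k$ transport budget increases. The non-common portion of each $P_k$ is then simply pulled back to its source $\homega_k^i$ at zero additional cost. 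The resulting tuple $(\alpha',P_1',P_2',\gamma_1',\gamma_2')$ is feasible, has $P_k',\alpha'$ supported on $\hOmega$, and satisfies $\alpha'(\Omega)=\alpha(\Omega)$.

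Finally, once the optimization is restricted to measures on the $n$-point set $\hOmega$, $P_k$ and $Q_k$ become vectors in $\R_+^n$, each coupling $\gamma_k$ becomes an $n\times n$ nonnegative matrix, and the constraint $\gamma_k\in\Gamma(Q_k,P_k)$ decomposes into the two marginal equalities in \eqref{eq:wass vs wass:dual}. The transport budget becomes $\sum_{l,m}\gamma_{k,l,m}\,c(\homega^l,\homega^m)\le\theta_k$, and optimizing out the auxiliary $\alpha^l\le\min(p_1^l,p_2^l)$ yields the objective $\sum_l\min(p_1^l,p_2^l)$. Concavity of $\min(\cdot,\cdot)$ together with linearity of all constraints makes the program convex, completing the reformulation.
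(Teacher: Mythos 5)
Your proof is correct, and it takes a genuinely different route from the paper's. The paper's argument is duality-driven: it applies Lagrangian relaxation of the two Wasserstein constraints together with Kantorovich duality to express the constrained supremum as an $\inf$--$\sup$, passes to the weak-duality upper bound, and then, for the resulting inner supremum over $(P_1,P_2)$, constructs a transport map $T:\Omega\to\hOmega$ \emph{that depends on the dual variables} (it sends each $\omega$ to the single nearest discounted source $\homega_k^{i_k(\omega)}$, with $k$ chosen by comparing $\lambda_1\,dP_1/d(P_1+P_2)$ against $\lambda_2\,dP_2/d(P_1+P_2)$); it closes the gap by re-invoking finite-dimensional Lagrangian duality. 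Your argument is instead a direct primal support-reduction on the original problem: you rewrite $\psi(P_1,P_2)=\sup\{\alpha(\Omega):\alpha\le P_1,\alpha\le P_2\}$, lift $\alpha$ through each $\gamma_k$ via $h_k=d\alpha/dP_k\in[0,1]$ to get $\widetilde\alpha_k\le\gamma_k$ with second marginal $\alpha$, glue $\widetilde\alpha_1,\widetilde\alpha_2$ along $\alpha$, and then split the common mass at each $\omega$ between its two sources $\homega_1^i,\homega_2^j$ using a ratio $t(\homega_1^i,\homega_2^j,\omega)$ that, by the triangle inequality $c(\homega_1^i,\homega_2^j)\le c(\homega_1^i,\omega)+c(\homega_2^j,\omega)$, simultaneously respects both budgets (e.g., $t=\min\{c(\homega_2^j,\omega)/c(\homega_1^i,\homega_2^j),1\}$ works pointwise, with the non-common residual $\gamma_k-\widetilde\alpha_k$ collapsed onto the diagonal at zero cost). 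The key structural difference is that your construction moves the common mass at $\omega$ to \emph{two} destinations in a ratio independent of any dual variables, whereas the paper moves \emph{all} mass at $\omega$ to a \emph{single} destination chosen via the multipliers. Your route is more elementary and measure-theoretic and makes transparent why LFDs live on $\hOmega$, at the cost of invoking the gluing lemma; the paper's route stays within standard Wasserstein-DRO duality machinery, which it then reuses (the multipliers $\lambda_k^*$) in the proof of Theorem~\ref{thm:opt_test}. One small presentational caveat: when you say the gluing gives a measure ``whose pairwise projections recover them,'' only the $(1,3)$- and $(2,3)$-marginals are pinned down (the $(1,2)$-marginal is whatever the gluing produces); in context this is clear, but worth stating precisely.
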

Above, the decision variables $\gamma_k$ are square matrices that can be viewed as a joint distribution on $\hOmega\times\hOmega$ with marginals specified by $Q_k$ and candidate LFDs $p_k$. The $lm$-th entry of $\gamma_k$ is specified by $\gamma_{k, l, m}$ and the $l$-th entry of $p_k$ (respectively, $Q_k$) is specified by $p_k^l$ (respectively, $Q_k^l$). In the following, we will denote $(P_1^\ast,P_2^\ast)$ as the LFDs solved from \eqref{eq:wass vs wass:dual}. Note that Lemma~\ref{lemma:wass vs wass} simplifies the LFD problem \eqref{problem:sup} from infinite-dimensional to finite-dimensional, using the fact that there exist LFDs supported on a finite set $\hOmega \subset \Omega$ due to our analysis. We also comment that the complexity of solving the LFDs in \eqref{eq:wass vs wass:dual} is \textit{independent} of the dimension of the data, once the pairwise distances $c(\homega^l,\homega^m)$ are calculated and given as input parameters of the convex program. 


\subsection{Robust optimal test: extension from test on training samples}\label{sec:optimal_test} 

Thus far, we have found one of the LFDs defined on the discrete set of training samples $\hOmega$ by solving the right-hand side of (\ref{eq:duality}), which in turn, defines the optimal test on training samples. However, it may be common in practice that the given test sample is different from all training samples. In this case, the current optimal test in Lemma \ref{lemma:inf_phi} associated with the LFDs is not well-defined on test samples. 
Besides, this optimal test is not uniquely defined when there is a tie between the likelihood of samples under two hypotheses. In this subsection, we will establish an optimal test that is well-defined anywhere in the observation space $\Omega$.


Our main result is the following theorem which specifies the general form of the robust optimal test $\pi^*$ and LFDs $(P_1^\ast,P_2^\ast)$ to the saddle point problem \eqref{problem:HT_detector}, whose proof is given in Appendix \ref{proof:opt_test}.
%
\begin{thm}[Robust optimal test]\label{thm:opt_test} 
Let $(P_1^\ast,P_2^\ast)$ be the LFDs solved from \eqref{eq:wass vs wass:dual}.
The robust optimal test $\pi^*:\Omega\to[0,1]$ to problem \eqref{problem:wass vs wass} is given by
\begin{itemize}
\item[(i)] On the support of training samples $\omega\in\hOmega$,  $\pi^\ast(\omega)=\hpi^\ast_m$, for $\omega = \homega^m$, where $\hpi^\ast_m \in[0,1]$, $m = 1, \ldots, n$, is the solution to the following system of linear equations 
\begin{equation}\label{eq:opt_test_constr}
  \begin{aligned}
  \sum_{m=1}^n (1-\hpi_m) P_1^\ast(\homega^m) &= \min_{\lambda_1\geq0} \bigg\{\lambda_1 \theta_1 +  \frac{1}{n_1} \sum_{l=1}^n \max_{1\leq m\leq n} \{1-\hpi_m - \lambda_1 c(\homega^l,\homega^m)\} \bigg\}, 
  \\
  \sum_{m=1}^n \hpi_m P_2^\ast(\homega^m) &=  \min_{\lambda_2\geq0} \bigg\{\lambda_2 \theta_2 +  \frac{1}{n_2} \sum_{l=1}^n \max_{1\leq m\leq n} \{\hpi_m - \lambda_2 c(\homega^l,\homega^m)\} \bigg\};
  \end{aligned}
\end{equation}
the solution is guaranteed to exist.

\item[(ii)] Off the support of training samples $\omega \in \Omega\setminus \hOmega$,   $\pi^*(\omega) \in [\ell(\omega), u(\omega)]$, where 
\begin{equation}\label{eq:pi_bound}
\begin{aligned}
\ell(\omega) &= \max\left\{\max_{i = 1,\ldots,n_1} \min_{\homega\in\hOmega}\left\{\pi^*(\homega) + \lambda_1^* c(\homega,\homega_1^i)  - \lambda_1^* c(\omega,\homega_1^i)   \right\}, 0 \right\}, 
\\
u(\omega) &= \min\left\{ \min_{j = 1,\ldots,n_2} \max_{\homega\in\hOmega}\{ \pi^*(\homega_2^j) - \lambda_2^* c(\homega,\homega_2^j)+ \lambda_2^* c(\omega,\homega_2^j)\},1\right\}, 
\end{aligned}
\end{equation} 
$\lambda_k^*$, $k=1,2$ are the minimizers to the $\inf$ problems on the right hand side of \eqref{eq:opt_test_constr}, and it is guaranteed that $u(\omega)\geq \ell(\omega)$, $\forall \omega \in \Omega\setminus \hOmega$.\\ 
\end{itemize}
\end{thm}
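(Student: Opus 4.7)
The plan is to establish the strong-duality identity \eqref{eq:duality} by explicit construction of $\pi^*$, using the LFDs $(P_1^*,P_2^*)$ already produced by Lemma \ref{lemma:wass vs wass}. Writing the minimax objective as $\inf_\pi\bigl\{\sup_{P_1\in\calP_1}\E_{P_1}[1-\pi]+\sup_{P_2\in\calP_2}\E_{P_2}[\pi]\bigr\}$ and invoking the Wasserstein DRO dual,
$$\sup_{P:\wass(P,Q_k)\le\theta_k}\E_P[f]=\min_{\lambda\ge 0}\Bigl\{\lambda\theta_k+\tfrac{1}{n_k}\sum_{i=1}^{n_k}\sup_{\omega\in\Omega}\{f(\omega)-\lambda c(\omega,\homega_k^i)\}\Bigr\},$$
the task reduces to building a $\pi^*$ for which the two Wasserstein sup-expectations are simultaneously attained at $P_1^*$ and $P_2^*$. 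Since the LFDs are supported on $\hOmega$, only the values $\hpi_m^*:=\pi^*(\homega^m)$ enter the identities on $\hOmega$, while the values of $\pi^*$ on $\Omega\setminus\hOmega$ must be chosen so as not to enlarge the inner $\sup_\omega$ terms of the dual.

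For part (i), I would restrict both the test and the adversary to be supported on $\hOmega$. This yields a finite-dimensional minimax game: $\hpi\in[0,1]^n$ against the compact polytopes $\{P_k\in\scrP(\hOmega):\wass(P_k,Q_k)\le\theta_k\}$, to which Sion's theorem applies directly. The adversary's optimal strategy coincides with $(P_1^*,P_2^*)$ from Lemma \ref{lemma:wass vs wass}, and the corresponding test's best response is some $\hpi^*\in[0,1]^n$. Writing the optimality of $\hpi^*$ against each fixed $P_k^*$ via the Wasserstein DRO dual — applied once with $f=1-\hpi$ on the $P_1$ side and once with $f=\hpi$ on the $P_2$ side — produces exactly the coupled system \eqref{eq:opt_test_constr}. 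Existence of $\hpi^*$ is therefore inherited from saddle-point existence in the restricted game.

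For part (ii), off $\hOmega$ I demand that the two dual identities continue to hold at the dual optima $\lambda_1^*,\lambda_2^*$ already selected by \eqref{eq:opt_test_constr}, which requires
$$\sup_{\omega'\in\Omega}\bigl\{1-\pi^*(\omega')-\lambda_1^* c(\omega',\homega_1^i)\bigr\}=\max_{1\le m\le n}\bigl\{1-\hpi_m^*-\lambda_1^* c(\homega^m,\homega_1^i)\bigr\},\quad i=1,\ldots,n_1,$$
together with the analogous identity on the $P_2$ side. Rearranging for $\pi^*(\omega)$ and clipping to $[0,1]$ yields $\pi^*(\omega)\ge\ell(\omega)$ from the $P_1$ constraints and $\pi^*(\omega)\le u(\omega)$ from the $P_2$ constraints, reproducing \eqref{eq:pi_bound}. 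The main obstacle is showing $\ell(\omega)\le u(\omega)$ uniformly in $\omega$, so that a measurable selection exists. I would argue this by contradiction: a point $\omega_0$ with $\ell(\omega_0)>u(\omega_0)$ produces witness indices and anchor points through which, by chaining the triangle inequality for $c$ with the complementary-slackness conditions of \eqref{eq:wass vs wass:dual} associated with $\lambda_1^*,\lambda_2^*$, one can exhibit a perturbation of $(P_1^*,P_2^*)$ that remains feasible in $\calP_1\times\calP_2$ yet strictly increases $\psi(P_1,P_2)$, contradicting the LFD optimality. Once $\ell\le u$ is established, any measurable selection $\pi^*(\omega)\in[\ell(\omega),u(\omega)]$ delivers a test whose worst-case risk equals $\psi(P_1^*,P_2^*)$, which closes the strong-duality loop in \eqref{eq:duality} and certifies minimax optimality.
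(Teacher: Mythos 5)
Your overall architecture is correct and closely tracks the paper: establish \eqref{eq:duality} by constructing $\pi^*$, obtain $\hpi^*$ from the finite-dimensional saddle-point game on $\hOmega$ via the Wasserstein DRO dual (part (i)), and then extend $\pi^*$ off $\hOmega$ by enforcing that the dual sup terms are not enlarged at $\lambda_1^*,\lambda_2^*$ (part (ii)). Part (i) is essentially the paper's argument.

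The gap is in your treatment of $\ell(\omega)\le u(\omega)$. You propose a contradiction argument: assume $\ell(\omega_0)>u(\omega_0)$, then chain the triangle inequality with complementary slackness from \eqref{eq:wass vs wass:dual} to perturb $(P_1^*,P_2^*)$ and strictly increase $\psi$. You do not carry out this construction, and the route is misdirected: the bound $\ell(\omega)\le u(\omega)$ has nothing to do with LFD optimality or complementary slackness at all. It is a generic consequence of the metric structure. Concretely, for any $i,j$ and any $\omega\in\Omega$, picking $\homega=\homega_2^j$ in the first $\min$ and $\homega=\homega_2^j$ in the second (when $\lambda_1^*\le\lambda_2^*$, and $\homega=\homega_1^i$ otherwise) gives
\begin{equation*}
\min_{\homega\in\hOmega}\left\{\pi^*(\homega)+\lambda_1^*c(\homega,\homega_1^i)\right\}
+\min_{\homega\in\hOmega}\left\{\lambda_2^*c(\homega,\homega_2^j)-\pi^*(\homega)\right\}
\le\min\{\lambda_1^*,\lambda_2^*\}\,c(\homega_1^i,\homega_2^j)
\le\lambda_1^*c(\omega,\homega_1^i)+\lambda_2^*c(\omega,\homega_2^j),
\end{equation*}
where the last step is the triangle inequality for $c$. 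Together with the observations $\ell(\omega)\le\pi^*(\homega_1^i)\le 1$ and $u(\omega)\ge\pi^*(\homega_2^j)\ge 0$ (since $\hpi^*\in[0,1]^n$), this already yields $\ell\le u$ everywhere. Because the inequality holds unconditionally, your contradiction hypothesis $\ell(\omega_0)>u(\omega_0)$ can never be entertained in the first place; there are no ``witness indices'' to extract, and the perturbation you describe would not materialize. So the plan, as written, does not close the hardest step of part (ii); the fix is to replace the indirect route with this direct three-line triangle-inequality computation.
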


The first part of the theorem defines the optimal test on training samples, resulting from the finite-dimensional saddle point problem 
\[
  \sup_{P_1\in\bcalP_1,P_2\in\bcalP_2} \inf_{\pi:\hOmega\to[0,1]} \Phi(\pi;P_1,P_2),
\]
where $\bcalP_k := \calP_k \cap \scrP(\hOmega)$, $k= 1, 2$.
By Lemma \ref{lemma:wass vs wass}, this is equivalent to the right-hand side of \eqref{eq:duality}.
The second part extends the optimal test on training samples to the whole space. This is a non-trivial results that build on the properties of Wasserstein metric and the duality result.

To illustrate Theorem \ref{thm:opt_test}, let us consider a toy example as shown in Figure~\ref{fig:opt_test_illustration}. Suppose the training samples for hypothesis $H_0$ is $\homega_1=-2$ and for hypothesis $H_1$ are $\homega_2=1$ and $\homega_3 = 3$. Then, the two empirical distributions $Q_1$ is a point mass on $\homega_1=-2$ and $Q_2$ is a discrete distribution that $\homega_2=1$ and $\homega_3 = 3$ occur with equal probability 1/2. By setting the radii of the uncertainty sets $\theta_1=\theta_2=1$, the LFDs solution to \eqref{eq:wass vs wass:dual} becomes $P_1^*(\homega_1)=0.69$, $P_1^*(\homega_2)=0.28$, $P_1^*(\homega_3)=0.03$, and $P_2^*(\homega_1)=0.29$, $P_2^*(\homega_2) = 0.28$, $P_2^*(\homega_3)=0.43$. Notice that there is a tie at the point $\homega_2$. Now we will invoke Theorem \ref{thm:opt_test} to break this tie. According to \eqref{eq:opt_test_constr}, the robust optimal test $\pi^*(\homega_i)$, $i= 1, 2, 3$ needs to satisfy
\[
1-\pi^*(\homega_1)-\lambda_1^* c(\homega_1,\homega_1) = 1-\pi^*(\homega_2)-\lambda_1^* c(\homega_1,\homega_2) = 1-\pi^*(\homega_3)-\lambda_1^* c(\homega_1,\homega_3). 
\]
Therefore, we can set $\pi^*(\homega_2) =1-c(\homega_1,\homega_2)/c(\homega_1,\homega_3)=0.4$. This means that the optimal test at $\homega_2$ should accept the hypothesis $H_0$ with probability $0.4$ (note that the tie is not broken arbitrarily). 
As a comparison, consider a different case where $\homega_2 = 2$ while everything else is kept the same. It can be verified that there is still a tie at $\homega_2$. However, this time we have $\pi^*(\homega_2) = 1-c(\homega_1,\homega_2)/c(\homega_1,\homega_3)=0.2$, meaning that the optimal test at $\homega_2$ should accept the hypothesis $H_0$ with probability $0.2$. We note that in this simple experiment, the chance of accepting $H_0$ decreases if we move $\homega_2$ away from $\homega_1$, which is consistent with our intuition as illustrated in Figure~\ref{fig:opt_test_illustration}. Moreover, we also plot the upper and lower bounds $u(\omega)$ and $\ell(\omega)$, as defined in \eqref{eq:pi_bound}, showing the range of the optimal test off the support of training samples. This example also demonstrates the advantage of using Wasserstein metrics in defining the uncertainty sets: the optimal test will directly reflect the data geometry.

\begin{figure}[!ht]
\begin{center}
\includegraphics[width=1\textwidth]{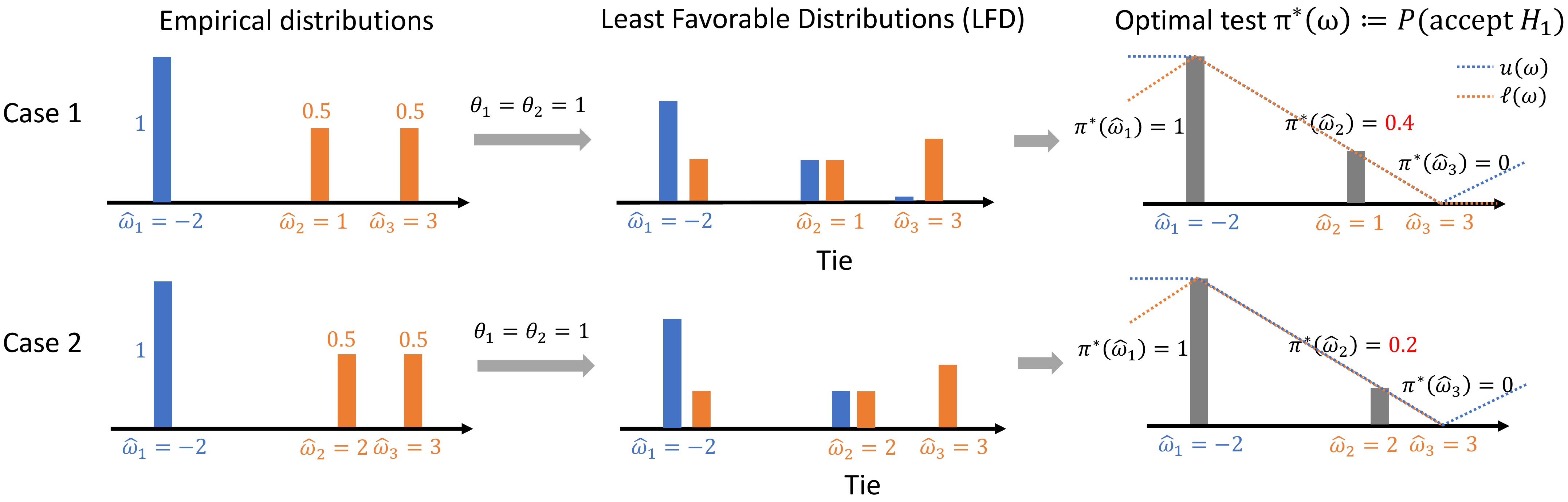}
\captionof{figure}{A toy example illustrating the optimal test depends on the training data configuration. In these two cases, there are three samples, and only $\homega_2$ is different, which takes values 1 and 2, respectively. Note that the optimal test $\pi^*(\homega_2)$ will change when the gap between empirical samples are different. We also illustrate the upper and lower bounds $u(\omega)$ and $\ell(\omega)$ from (\ref{eq:pi_bound}).}
\label{fig:opt_test_illustration}
\end{center}
\end{figure}



\subsection{Extension to whole space via kernel smoothing}
\label{sec:sub_kernel}
We observe that for samples $\omega$ off the empirical support, it is possible to have $u(\omega)$ strictly larger than $\ell(\omega)$ with $u(\cdot),\ell(\cdot)$ given in Equation \eqref{eq:pi_bound}. In such cases, there are infinite choices for $\pi^*(\omega)$ according to Theorem~\ref{thm:opt_test}. In this subsection, we describe a specific choice for $\pi^*(\omega)$ under such situation by kernel smoothing. As a natural strategy, we may use kernel smoothing to extend LFDs solved from \eqref{eq:wass vs wass:dual} to the whole space. This can be done by convolving the discrete LFDs with a kernel function $G_h:\mathbb R^d \rightarrow \mathbb R$ parameterized by a (bandwidth) parameter $h$: 
\begin{equation}
P_k^h(\omega):= \sum_{l=1}^nP_k^\ast(\homega^l) G_h( \omega - \homega^l), \ k = 1,2, \ \forall \omega \in \Omega.
\label{smoothed_LFDs}
\end{equation}
There can be various choices of kernel functions. For instance, given normalized data, we can use the product of one-dimensional kernel function $g: \mathbb R \rightarrow \mathbb R$ with bandwidth $h > 0$:
\[
G_h(x) =  \frac{1}{h^d}\prod_{i=1}^d  g\left(\frac{x_i}{h}\right), x \in \mathbb R^d.
\] 
An example of the kernel-smoothed LFDs is shown in Figure~\ref{fig:LFD_illustration}. Through convolution, we can obtain the kernel-smoothed LFDs and the corresponding test $\pi^\ast_h$ that is defined as the optimal test for the simple hypothesis under $(P_1^h,P_2^h)$ as specified in Lemma~\ref{lemma:inf_phi}. To ensure the risk after kernel-smoothing is comparable to that of the robust optimal test $\pi^*$, we truncate the resulted $\pi^\ast_h$ such that \eqref{eq:pi_bound} is satisfied after truncation. After such a procedure, the test based on the kernel-smoothed LFDs will achieve a good performance as validated by the numerical experiments in Section \ref{sec:experiment}.

\subsection{Test with batch samples}\label{sec:multi-obs}
Testing using a batch of samples is important in practice, as one test sample may not achieve sufficient power. We can construct a test for a batch of samples by assembling the optimal test for each individual sample. Assume $m$ i.i.d. test samples $\omega_1$, $\omega_2$, $\ldots$, $\omega_m$. Consider a {\it batch test} based on the ``majority rule'' with the acceptance region for $H_0$  defined as
$\mathbb A:=\{(\omega_1,\omega_2,\ldots,\omega_m):\pi^m(\omega_1,\omega_2,\ldots,\omega_m) \geq 1/2\},$
where
\[
\pi^m(\omega_1,\omega_2,\ldots,\omega_m) = \frac1m\sum_{i=1}^m \pi^\ast(\omega_i),
\]
can be viewed as the fraction of votes in favor of hypothesis $H_0$ (due to Lemma~\ref{lemma:inf_phi}). 
We can bound the risk of such a majority rule batch test:
\begin{proposition}[Risk for batch test]\label{multi_obs_risk}
The risk of the test $\pi^m(\omega_1,\ldots,\omega_m)$
is be upper bounded by
\[
\begin{aligned}
& \max\left\{ \sup_{P_1\in\calP_1} \mathbb{P}_{P_1} \left[ \mathbb A^c \right], \sup_{P_2\in\calP_2} \mathbb{P}_{P_2} \left[ \mathbb A \right] \right\} 
\leq    \sum_{m/2 \leq i \leq m} {m \choose i} (\epsilon^\ast)^i(1-\epsilon^\ast)^{m-i},
\end{aligned}
\] 
where 
\[\epsilon^\ast = \sup_{P_1\in\calP_1,P_2\in\calP_2}\Phi(\pi^\ast;P_1,P_2),\] is the worst-case risk of the optimal randomized test and $\mathbb A$ is the acceptance region for $H_0$. Thus, when $\epsilon^\ast < 1/2$, the above probability tends to 0 exponentially fast as the batch size $m \rightarrow \infty$.
\end{proposition}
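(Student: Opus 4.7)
The plan is to exploit the Bernoulli-vote interpretation of $\pi^\ast$ and reduce the batch error to a binomial tail. First I would decompose $\epsilon^\ast$: since $\Phi(\pi^\ast;P_1,P_2) = \mathbb{E}_{P_1}[1-\pi^\ast] + \mathbb{E}_{P_2}[\pi^\ast]$ is a sum of non-negative terms depending on $P_1$ and $P_2$ separately, the joint supremum decouples and yields $\sup_{P_1\in\calP_1}\mathbb{E}_{P_1}[1-\pi^\ast] \leq \epsilon^\ast$ as well as $\sup_{P_2\in\calP_2}\mathbb{E}_{P_2}[\pi^\ast] \leq \epsilon^\ast$. Thus for any fixed $P_1 \in \calP_1$ the per-sample miss probability $\alpha := \mathbb{E}_{P_1}[1-\pi^\ast(\omega)]$ satisfies $\alpha \leq \epsilon^\ast$, and symmetrically for $P_2$.

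For the Type-I tail I would introduce independent auxiliary Bernoulli votes $V_i\in\{0,1\}$ with $\mathbb{P}[V_i=1\mid\omega_i]=\pi^\ast(\omega_i)$, which is exactly the random decision produced by the optimal single-sample test of Lemma~\ref{lemma:inf_phi}. Under any $P_1\in\calP_1$ the $V_i$'s are marginally i.i.d.\ $\mathrm{Bern}(1-\alpha)$, and the ``majority-vote'' acceptance event $\mathbb A$ becomes $\{\sum_i V_i \geq m/2\}$. Consequently
\[
  \mathbb{P}_{P_1}[\mathbb A^c] \;=\; \mathbb{P}\!\left[\textstyle\sum_i V_i < m/2\right] \;=\; \mathbb{P}[\mathrm{Bin}(m,\alpha) > m/2],
\]
after the symmetry $k\mapsto m-k$. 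Because $\alpha \leq \epsilon^\ast < 1/2$ and $p\mapsto\mathbb{P}[\mathrm{Bin}(m,p)\geq m/2]$ is non-decreasing on $[0,1/2]$, this probability is in turn bounded by $\sum_{m/2\leq i\leq m}\binom{m}{i}(\epsilon^\ast)^i(1-\epsilon^\ast)^{m-i}$. An identical argument applied to $1-\pi^\ast$ and any $P_2\in\calP_2$ produces the same bound on $\sup_{P_2}\mathbb{P}_{P_2}[\mathbb A]$, and taking the maximum yields the stated inequality; the exponential decay when $\epsilon^\ast<1/2$ is then immediate from Hoeffding's inequality applied to $\mathrm{Bin}(m,\epsilon^\ast)$.

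The main delicacy is the bridge between the deterministic statistic $\pi^m=(1/m)\sum_i\pi^\ast(\omega_i)$ used to define $\mathbb A$ and the random vote sum $\sum_i V_i$ underlying the binomial analysis: note that $\pi^m=\mathbb{E}[(1/m)\sum_i V_i \mid \omega_1,\dots,\omega_m]$, so the two events differ in general. The ``fraction of votes'' language already advertised after the definition of $\pi^m$ has to be made rigorous through a conditional coupling to $V_1,\dots,V_m$, after which the monotonicity and symmetry steps above are routine. Once this identification is pinned down, no further machinery beyond the elementary binomial tail comparison is required.
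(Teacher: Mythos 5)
Your approach is essentially the same as the paper's: reduce the batch error to a binomial tail by viewing each single-sample decision as a Bernoulli vote, use the decoupling $\sup_{P_1}\mathbb{E}_{P_1}[1-\pi^\ast]\le\epsilon^\ast$ and $\sup_{P_2}\mathbb{E}_{P_2}[\pi^\ast]\le\epsilon^\ast$, apply monotonicity of the binomial tail in $p$, and finish with a Chernoff/Hoeffding estimate for the exponential decay. Your observation that $\sup_{P_1,P_2}\Phi(\pi^\ast;P_1,P_2)$ decouples because the two summands depend on $P_1$, $P_2$ separately is correct and is used implicitly in the paper as well.

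The ``delicacy'' you flag, however, is more than a bookkeeping issue; it is a genuine mismatch and it is present, unacknowledged, in the paper's own proof. The proposition defines the acceptance region deterministically as $\mathbb A=\{\pi^m\ge 1/2\}$ with $\pi^m=\tfrac1m\sum_i\pi^\ast(\omega_i)\in[0,1]$, but the proof (yours and the paper's) actually bounds a different test: draw $V_i\sim\mathrm{Bern}(\pi^\ast(\omega_i))$ conditionally independently and decide by majority vote of the realized $V_i$'s. The event $\{\pi^m<1/2\}$ and the event $\{\sum_i V_i<m/2\}$ are not the same, and there is no pointwise domination in general. Concretely, your plan to ``bridge'' these by a conditional coupling cannot give $\mathbf 1\{\sum_i\pi^\ast(\omega_i)<m/2\}\le\mathbb{E}[\mathbf 1\{\sum_i V_i<m/2\}\mid\omega_{1:m}]$: if every $\pi^\ast(\omega_i)=0.49$, the left side is $1$ while the right side is $\mathbb{P}[\mathrm{Bin}(m,0.49)<m/2]<1$. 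More importantly, the exact binomial tail is not a valid upper bound for averages of bounded variables in the way your (and the paper's) argument needs: for $X\in[0,1]$ with $\mathbb{E}[X]=p$, one can push $\mathbb{P}[X>1/2]$ arbitrarily close to $2p$ by concentrating mass just above $1/2$, which exceeds the $m=1$ binomial tail $\mathbb{P}[\mathrm{Bern}(p)\ge 1/2]=p$. So the bound as stated is justified only if the batch test is taken to be the randomized majority vote (the interpretation the paper's proof silently adopts), not the deterministic rule $\mathbb A$ that the proposition actually writes down. In short, your identification of the gap is correct and sharper than the paper's own presentation, but your assurance that the coupling step is ``routine'' is not; the proof as written establishes the claim only for the randomized majority-vote version of the batch test.
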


\section{Radii Selection}\label{sec:radius}

In this section, we discuss how to select the radii $\theta_1,\theta_2$, which is critical to the performance of the robust optimal test. There is clearly a trade-off: when the radius is too small, the optimal test is not robust and does not generalize well to new test data; while the radius is too large, the solution may be too conservative, causing performance degradation. We expect sample sizes $n_1$ and $n_2$ to play a major role in determining the radii, and thus in the following we emphasize by denoting the radii as $\theta_{k,n_k}$ and the empirical distributions as $Q_{k,n_k}$. It should also be remembered that the uncertainty sets $\calP_{k}(\theta_{k,n_k})$, $k=1,2$, also depend on the sample sizes. 

To characterize the radii selection, we adopt the profile-based inference proposed by \cite{blanchet2019robust}, which extends the empirical likelihood method for divergence-based distributionally robust optimization \cite{lam2019recovering,duchi2019variance} by replacing likelihood with transport cost.
It selects the radii based on the principle that the distributional uncertainty set should contain a pair of distributions whose resulting optimal test (for the corresponding simple hypothesis test) coincides with the optimal test for the underlying true distributions.   
More precisely, let $P_1^\circ,P_2^\circ$ be the underlying true distributions of the hypotheses $H_0$ and $H_1$ respectively. Define the {\it oracle} test $\pi^\circ$ as the optimal test of the simple hypothesis test associated with $P_1^\circ,P_2^\circ$, which is specified by Lemma~\ref{lemma:inf_phi}.  
Also define the set of optimal tests for resolving simple hypothesis test associated with each pair of distributions in our uncertainty sets (using Lemma \ref{lemma:inf_phi}) as
\[
\Pi(\theta_{1,n_1},\theta_{2,n_2}) := \left\{\pi:  \exists P_1\in\calP_{1}(\theta_{1,n_1}), P_2\in\calP_{2}(\theta_{2,n_2}) \text{ such that } 
\pi \in \argmin_{\pi'} \Phi(\pi'; P_1, P_2) \right\}.
\]
We are interested in finding the radii such that the set is likely to include the oracle test, i.e.,  $\pi^\circ\in\Pi(\theta_{1,n_1},\theta_{2,n_2})$. To achieve this goal, we introduce a set $\mathcal S$ that contains all possible pairs of distributions giving rise to the oracle test $\pi^\circ$: 
\[
\mathcal S := \left\{(P_1, P_2)\in\scrP(\Omega)\times\scrP(\Omega): \pi^\circ \in \argmin_{\pi:\Omega\to[0,1]} \Phi(\pi;P_1,P_2) \right\}.\]
Note that $\mathcal{S}$ is guaranteed to be non-empty since it contains at least the true distribution $\{P_1^\circ, P_2^\circ\}$. Then consider within $\mathcal S$, the distributions that are closest to the empirical distributions $Q_{k,n_k}$ and define the so-called \emph{profile function} to capture the notion of ``distance to the empirical distributions'' within the set:
\begin{equation}\label{eq:profile}
\profile_{n_1,n_2} := \inf_{\{P_1,P_2\} \in \mathcal S}\max_{k=1,2}\; \wass(P_k,Q_{k,n_k}),
\end{equation}
here the subscript indicates its dependence on the sample sizes $n_1$ and $n_2$. Clearly if the radii $\theta_{1,n_1},\theta_{2,n_2} \geq \profile_{n_1,n_2}$, then the intersection $(\calP_{1}(\theta_{1,n_1})\times\calP_{2}(\theta_{2,n_2})) \cap \mathcal{S}$ is nonempty, and thus $\pi^\circ\in\Pi(\theta_{1,n_1},\theta_{2,n_2})$, as illustrated in Figure~\ref{fig:profile}.

Our goal is to find an asymptotic upper bound of such distance and use it as the radii; such a choice will be such that the robust optimal test lies in the confidence region of $\pi^\circ$. Indeed, if we can provide a theoretical upper bound for the asymptotic value of the right-hand side of \eqref{eq:profile}, then by setting the radii accordingly, the intersection $(\calP_{1}(\theta_{1,n_1})\times\calP_{2}(\theta_{2,n_2})) \cap \mathcal{S}$ is nonempty and thus $
\pi^\circ\in\Pi(\theta_{1,n_1},\theta_{2,n_2})$.
From the strong duality in \eqref{eq:duality} which has been proved in the previous section, any optimal solution to the left-hand side of \eqref{eq:duality} will belong to the set $\Pi(\theta_{1,n_1},\theta_{2,n_2})$. This ensures that the optimal test we obtained belongs to the confidence region for the oracle test $\pi^0$.

\begin{figure}[!ht]
\begin{center}
\includegraphics[width = .6\textwidth]{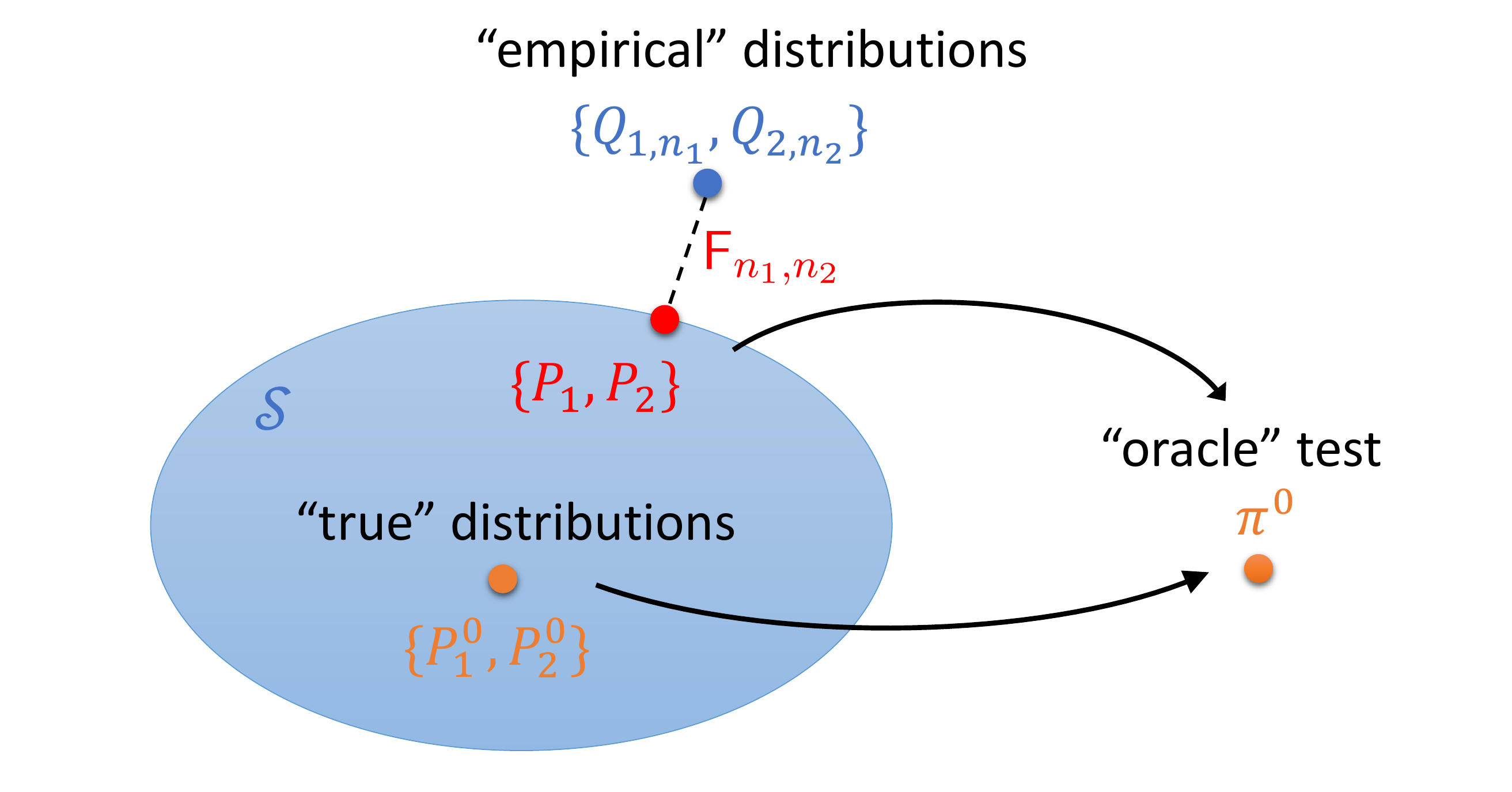} 
\end{center}
\caption{An illustration of the profile function. The set $\mathcal S$ contains all pairs of distributions $\{P_1,P_2\}$ such that the oracle test is optimal; $\profile_{n_1,n_2}$ denotes the minimal distance from the empirical distribution to the set $\mathcal S$.}
\label{fig:profile}
\end{figure}

We first derive an equivalent dual representation of the profile function $\profile_{n_1,n_2}$. We introduce some additional definitions and notations as follows. We partition the sample space $\Omega$ as
\[
\Omega_1^\circ := \left\{ \omega \in \Omega: dP_1^\circ(\omega) \geq dP_2^\circ(\omega)\right\},\quad  \Omega_2^\circ := \left\{ \omega \in \Omega: dP_1^\circ(\omega) < dP_2^\circ(\omega)\right\}.
\]
Thereby the oracle test $\pi^\circ$ accepts hypothesis $H_0$ on set $\Omega_1^\circ$ and accept hypothesis $H_1$ on set $\Omega_2^\circ$. 
The boundary between $\Omega_1^\circ$ and $\Omega_2^\circ$ corresponds to the decision boundary of the oracle test $\pi^\circ$; the boundary is typically of measure zero for continuous distributions. 
Denote by $\cB_+(\Omega)$ ($\Lip(\Omega)$) the set of bounded and non-negative (respectively, 1-Lipschitz continuous) functions on $\Omega$. Define the function class:
\begin{equation}\label{eq:setA}
\cA := \Big\{\bm\alpha=\bm\alpha_2 \ind_{\Omega_2^\circ} - \bm\alpha_1 \ind_{\Omega_1^\circ}:\;\bm\alpha_k\in\cB_+(\Omega_k^\circ)\cap\Lip(\Omega_k^\circ),\,\bm\alpha(\omega_k^\circ)=0,\,k=1,2 \Big\},   
\end{equation}
where $\ind$ is the indicator function and $\omega_k^\circ\in\Omega_k^\circ$, $k=1,2$.
Thus for each function $\bm\alpha\in\cA$, the positive part is on $\Omega_2^\circ$ and the negative part is on $\Omega_1^\circ$, and all functions in $\cA$ coincide on $\omega_1^\circ,\omega_2^\circ$.
We have the following lemma, whose proof is given in Appendix \ref{sec:B.1}.

\begin{lemma}
\label{lemma:profile_dual}
The profile function $\profile_{n_1,n_2}$ defined in \eqref{eq:profile} equals
\[
  \profile_{n_1,n_2} =  \sup_{\substack{\lambda_1,\lambda_2\geq 0,\lambda_1+\lambda_2 \leq 1\\\bm\alpha\in\cA}}
  \begin{multlined}[t] \Bigg\{ \E_{\homega_1\sim Q_{1,n_1}} \Big[\inf_{\omega\in\Omega} \big\{ \lambda_1 c( \omega,\homega_1) + \bm\alpha(\omega) \big\} \Big] \\
  + \E_{\homega_2\sim Q_{2,n_2}} \Big[\inf_{\omega\in\Omega} \big\{ \lambda_2 c( \omega,\homega_2) - \bm\alpha(\omega) \big\}
  \Big] \Bigg\}.
  \end{multlined}
\]
\end{lemma}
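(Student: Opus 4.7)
The plan is Lagrangian duality applied to the epigraph form of the primal. First, rewrite
\[
\profile_{n_1,n_2} = \inf\{t\geq 0 : \wass(P_k,Q_{k,n_k})\leq t,\ k=1,2,\ (P_1,P_2)\in\mathcal{S}\}.
\]
By Lemma~\ref{lemma:inf_phi} applied to the oracle test $\pi^\circ$, $(P_1,P_2)\in\mathcal{S}$ is equivalent to the pointwise inequalities $dP_1\geq dP_2$ on $\Omega_1^\circ$ and $dP_1\leq dP_2$ on $\Omega_2^\circ$. This is the polar-cone condition $\int\bm\alpha\,d(P_1-P_2)\leq 0$ for every $\bm\alpha$ with the sign pattern defining $\cA$ (non-positive on $\Omega_1^\circ$, non-negative on $\Omega_2^\circ$), which identifies the appropriate functional Lagrange multiplier for the membership constraint.

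Next, form the Lagrangian
\[
L(t,P_1,P_2;\lambda,\bm\alpha) = t(1-\lambda_1-\lambda_2) + \sum_{k=1}^{2}\lambda_k\wass(P_k,Q_{k,n_k}) + \int\bm\alpha\,d(P_1-P_2),
\]
with $\lambda_1,\lambda_2\geq 0$. Infimizing over $t\geq 0$ forces $\lambda_1+\lambda_2\leq 1$ (otherwise the $t$-term goes to $-\infty$) and annihilates the $t$-dependence. For each $k$, using the primal coupling representation $\wass(P_k,Q_{k,n_k}) = \inf_{\gamma_k\in\Gamma(P_k,Q_{k,n_k})}\int c\,d\gamma_k$ and absorbing $\pm\int\bm\alpha\,dP_k$ into the cost (since $\bm\alpha$ is a function of the first coordinate), a joint infimum over $P_k$ and $\gamma_k$ collapses to an infimum over couplings $\gamma_k$ with second marginal $Q_{k,n_k}$ and free first marginal, which is attained by concentrating the first marginal pointwise at the inner minimizer:
\[
\inf_{P_k\in\scrP(\Omega)} \bigl[\lambda_k\wass(P_k,Q_{k,n_k})\pm \textstyle\int\bm\alpha\,dP_k\bigr] = \E_{\homega_k\sim Q_{k,n_k}}\bigl[\inf_{\omega\in\Omega}\{\lambda_k c(\omega,\homega_k)\pm\bm\alpha(\omega)\}\bigr].
\]
Summing over $k=1,2$ gives precisely the dual functional in the statement and establishes weak duality ($\sup \leq \profile_{n_1,n_2}$).

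For strong duality, the problem is convex in $(P_1,P_2)$ and linear in $(\lambda,\bm\alpha)$, and a Slater point is provided by the true distributions $(P_1^\circ,P_2^\circ)\in\mathcal{S}$ with $t$ strictly exceeding the two Wasserstein distances. I would apply Sion's minimax theorem after first restricting candidate $(P_1,P_2)$ to measures supported on a compact set $\Omega'\supseteq\hOmega$ (on which the Wasserstein ball is weakly compact), and then pass to the limit $\Omega'\uparrow\Omega$ using the lower semi-continuity of $\wass$. Finally, reduce the supremum from the sign-constrained cone to $\cA$: unboundedness of $\bm\alpha_k$ drives the corresponding inf to $-\infty$ (so boundedness is without loss); the expressions $\inf_\omega\{\lambda_k c(\omega,\homega_k)\pm\bm\alpha(\omega)\}$ are $(\lambda_k c)$-transforms, hence depend on $\bm\alpha_k$ only through its $c$-concave envelope, which is $1$-Lipschitz after rescaling; and the pointwise conditions $\bm\alpha(\omega_k^\circ)=0$ are a reference-point normalization that leaves the supremum unchanged.

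The main obstacle is making the strong-duality step rigorous: $\scrP(\Omega)$ is not weakly compact when $\Omega$ is unbounded, $\wass$ is only lower semi-continuous in the weak topology, and Sion's theorem does not apply directly. The truncation-plus-limit scheme outlined above must be executed carefully enough to preserve both the Wasserstein constraint and the membership $(P_1,P_2)\in\mathcal{S}$ in the limit. A secondary subtlety is justifying the $c$-transform reduction from the full sign-constrained cone to $\cA$ while preserving the sign structure on $(\Omega_1^\circ,\Omega_2^\circ)$; this requires verifying that the $c$-biconjugate of a sign-constrained $\bm\alpha$ remains appropriately signed on each piece of the partition.
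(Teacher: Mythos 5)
Your plan follows the same blueprint as the paper: a Lagrangian relaxation of the $\mathcal{S}$-membership constraint via a sign-constrained multiplier $\bm\alpha$ (the paper proves your polar-cone equivalence as a separate lemma), a Kantorovich reduction of the inner infimum to a pointwise minimum, Sion's minimax theorem on a compact truncation followed by a limiting argument, and a $c$-transform reduction to $1$-Lipschitz $\bm\alpha$. The epigraph form with the auxiliary variable $t$ is a cosmetic variant of the paper's direct $\lambda_1+\lambda_2\le 1$ parametrization, and while you correctly flag the limiting step as the hard part, the paper's execution is more quantitative than a lower-semicontinuity appeal (it bounds the primal--dual gap by $O(\epsilon)$ after a two-stage truncation of both the domain and the cost function).
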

The objective function, denoted as $F_{n_1,n_2}(\lambda_1,\lambda_2,\bm\alpha)$, of the above supreme problem can be decoupled into two terms:  
$\label{eq:profile_dual_indicator}
F_{n_1,n_2}(\lambda_1,\lambda_2,\bm\alpha)= E_{n_1,n_2}(\lambda_1,\lambda_2,\bm\alpha) +  G_{n_1,n_2}(\bm\alpha)$,
where 
\[\begin{aligned}
E_{n_1,n_2}(\lambda_1,\lambda_2,\bm\alpha) &:= 
\begin{multlined}[t]
\frac{1}{n_1} \sum_{i=1}^{n_1} \inf_{\omega\in\Omega}\{ \lambda_1c(\omega,\homega_1^i) + \bm\alpha(\omega)-\bm\alpha(\homega_1^i) \} \\
+ \frac{1}{n_2} \sum_{j=1}^{n_2} \inf_{\omega\in\Omega}\{ \lambda_2c(\omega,\homega_2^j) - (\bm\alpha(\omega) - \bm\alpha(\homega_2^j))\},
\end{multlined} \\
G_{n_1,n_2}(\bm\alpha) &:= \frac{1}{n_1} \sum_{i=1}^{n_1(N)}   \bm\alpha(\homega_1^i) - \frac{1}{n_2} \sum_{j=1}^{n_2(N)}   \bm\alpha(\homega_2^j).
\end{aligned}
\]
It follows that $E_{n_1,n_2}(\lambda_1,\lambda_2,\bm\alpha)\leq 0$ since the $\inf$ value is non-positive by taking $\omega=\homega_1^i$ and $\omega=\homega_2^j$, respectively, whence $F_{n_1,n_2}(\lambda_1,\lambda_2,\bm\alpha)\leq G_{n_1,n_2}(\bm\alpha)$ and 
$$\profile_{n_1,n_2} \leq \sup_{\bm\alpha\in\cA} G_{n_1,n_2}(\bm\alpha).$$
Based on the definition of $\cA$ in \eqref{eq:setA}, we observe a close-form solution for $\sup_{\bm\alpha\in\cA} G_{n_1,n_2}(\bm\alpha)$ as follows.
By definition of $\cA$, $\bm\alpha(\homega_1^i)\leq0$ for $\homega_1^i\in\Omega_1^\circ$ and $\bm\alpha(\homega_2^j)\geq0$ for $\homega_2^j\in\Omega_2^\circ$. Therefore, to maximize $G_{n_1,n_2}(\bm\alpha)$, we can set $\bm\alpha(\homega_1^i)=0$ for $\homega_1^i\in\Omega_1^\circ$ and $\bm\alpha(\homega_2^j)=0$ for $\homega_2^j\in\Omega_2^\circ$. 
%
In addition, since $ \bm\alpha_1,\bm\alpha_2$ are $1$-Lipschitz, we have $\bm\alpha(\homega_1^i) \leq \min_{j:\;\homega_2^j \in\Omega_2^\circ}c(\homega_1^i,\homega_2^j)$ for $\homega_1^i\in\Omega_2^\circ$ and $\bm\alpha(\homega_2^j) \geq -\min_{i:\;\homega_1^i \in\Omega_1^\circ}c(\homega_2^j,\homega_1^i)$ for $\homega_2^j\in\Omega_1^\circ$. 
Hence we have
\begin{equation}\label{eq:bound_dist_text}
\sup_{\bm\alpha\in\cA} G_{n_1,n_2}(\bm\alpha) = \frac{1}{n_1} \sum_{i:\;\homega_1^i\in\Omega_2^\circ}\min_{j:\;\homega_2^j \in\Omega_2^\circ}c(\homega_1^i,\homega_2^j) + \frac{1}{n_2} \sum_{j:\;\homega_2^j\in\Omega_1^\circ} \min_{i:\;\homega_1^i \in\Omega_1^\circ}c(\homega_2^j,\homega_1^i). 
\end{equation}

Note that the profile function defined in \eqref{eq:profile} measures the minimal transport cost from the empirical distributions to some distribution in the set $\mathcal{S}$ that yields the same optimal test as the oracle test.
From this perspective, the right-hand side of \eqref{eq:bound_dist_text} provides an upper bound on such minimal transport cost. 
It basically suggests to move those empirical samples $\homega_1^i$ (resp.~$\homega_2^j$) falling into the wrong region $\Omega_2^\circ$ (resp.~$\Omega_1^\circ$) to the closest empirical samples in a different class $\argmin_{j:\;\homega_2^j \in\Omega_2^\circ}c(\homega_1^i,\homega_2^j)$ (resp.~$\argmin_{i:\;\homega_1^i \in\Omega_1^\circ}c(\homega_2^j,\homega_1^i)$). 
Thereby, this form sheds light on an approximate optimal distributions of \eqref{eq:profile} that are obtained by moving empirical points to some neighboring points in a different class.
The resulting distributions can be different from the true distribution, but yield an optimal test close to the oracle test.


Next, we compute the asymptotic value of $\sup_{\bm\alpha\in\cA} G_{n_1,n_2}(\bm\alpha)$ using \eqref{eq:bound_dist_text}, 
which only involves the minimum-distance-type statistics of two sets of sample, which are easier to analyze than $\profile_{n_1,n_2}$.
We consider a balanced sample size regime.
%
\begin{thm}\label{thm:radius}
Suppose $\lim_{n_1,n_2\to\infty} n_2/n_1= c>0$.
Assume that $f_1$ and $f_2$ are respectively the density functions of $P_1^\circ$ and $P_2^\circ$ that are absolutely continuous to each other and satisfy
\[
\int_{\Omega_1^\circ} f_2(x)f_1(x)^{-1/d}dx < \infty, \ \int_{\Omega_2^\circ} f_1(x)f_2(x)^{-1/d}dx < \infty,
\]
and for some $\epsilon>0$ it holds that
\[\begin{aligned}
& \sup_{n \in \mathbb{N}} \E_{x\sim f_1|_{\Omega_2^\circ}, x_1,\ldots,x_n\sim f_2|_{\Omega_2^\circ}}[(n^{1/d}\min_{1\leq i\leq n}  \left\Vert x - x_i\right\Vert)^{1+\epsilon}] < \infty,\\ 
& \sup_{n \in \mathbb{N}} \E_{x\sim f_2|_{\Omega_1^\circ}, x_1,\ldots,x_n\sim f_1|_{\Omega_1^\circ}}[(n^{1/d}\min_{1\leq i\leq n}  \left\Vert x - x_i\right\Vert)^{1+\epsilon}]<\infty,
\end{aligned}
\]
where $f|_{A}$ denotes the density of the restriction of distribution $f$ on a set $A$. Then 
\begin{equation}\label{eq:upper_bound}
\begin{aligned}
n_1^{1/d}\sup_{\bm\alpha\in\cA} G_{ n_1, n_2}(\bm\alpha) \rightarrow \frac{\Gamma(1+1/d)}{V_d^{1/d}} \left(c^{-1/d}\int_{\Omega_2^\circ} \frac{f_{1}(x)}{[f_{2}(x)]^{1/d}}dx + \int_{\Omega_1^\circ} \frac{f_{2}(x)}{[f_{1}(x)]^{1/d}} dx\right),
\end{aligned}
\end{equation}
in $L^1$ as $n_1,n_2\to\infty$, where $V_d = \pi^{d/2}/\Gamma(1+d/2)$ is the volume of the unit ball in $\R^d$, and $\Gamma(x)=\int_0^\infty z^{x-1}e^{-z}dz$ is the Gamma function.
\end{thm}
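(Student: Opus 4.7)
The plan is to start from the closed-form expression (\ref{eq:bound_dist_text}), which already reduces $\sup_{\bm\alpha\in\cA} G_{n_1,n_2}(\bm\alpha)$ to the sum of two cross-sample nearest-neighbor statistics, which I denote
\[
T_1 := \frac{1}{n_1}\sum_{i:\;\homega_1^i\in\Omega_2^\circ}\min_{j:\;\homega_2^j\in\Omega_2^\circ} c(\homega_1^i,\homega_2^j),\qquad T_2 := \frac{1}{n_2}\sum_{j:\;\homega_2^j\in\Omega_1^\circ}\min_{i:\;\homega_1^i\in\Omega_1^\circ} c(\homega_2^j,\homega_1^i).
\]
I would establish the $L^1$ limit of $n_1^{1/d} T_1$ and of $n_1^{1/d} T_2$ separately and add them. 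The prefactor $c^{-1/d}$ in (\ref{eq:upper_bound}) will emerge only for the $T_1$ piece because its inner minimum is taken over a sample of effective size $\sim n_2 \sim c\, n_1$, so the natural normalization there is $n_2^{1/d}\sim c^{1/d} n_1^{1/d}$, whereas for $T_2$ the outer normalization $n_1^{1/d}$ already matches the nearest-neighbor sample-size scaling.

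The key analytic ingredient is the classical first-order asymptotic for the nearest-neighbor distance: if $X_1,\dots,X_n$ are i.i.d.\ from a density $g$ that is continuous at a fixed $x$ with $g(x)>0$, then a Poisson-approximation computation gives
\[
\P\big(n^{1/d}\min_i\norm{x-X_i}>t\big)=\Big(1-\int_{B(x,\,t/n^{1/d})} g(y)\,dy\Big)^n \longrightarrow \exp(-V_d\, g(x)\, t^d),
\]
so $n^{1/d}\min_i\norm{x-X_i}$ converges in distribution to $(V_d g(x))^{-1/d} W$ with $W^d\sim\mathrm{Exp}(1)$; since $\E[W]=\Gamma(1+1/d)$, the rescaled expectation converges to $\Gamma(1+1/d)(V_d g(x))^{-1/d}$, and the $(1+\epsilon)$-moment hypothesis upgrades this distributional convergence to $L^1$ convergence via Vitali's theorem.

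To apply this to $T_1$, I would condition on $\homega_1^i = x\in\Omega_2^\circ$ and on the random count $N_2:=|\{j:\homega_2^j\in\Omega_2^\circ\}|$. The selected $\homega_2^j$'s are conditionally i.i.d.\ from $f_2|_{\Omega_2^\circ}/P_2^\circ(\Omega_2^\circ)$, and $N_2/n_2\to P_2^\circ(\Omega_2^\circ)$ almost surely; combining these with the pointwise nearest-neighbor asymptotic and canceling the $P_2^\circ(\Omega_2^\circ)$ factor yields
\[
\E\big[\,n_2^{1/d}\min_{j:\,\homega_2^j\in\Omega_2^\circ}c(x,\homega_2^j)\,\big|\,\homega_1^i=x\,\big]\longrightarrow \Gamma(1+1/d)(V_d f_2(x))^{-1/d}.
\]
Multiplying by $(n_1/n_2)^{1/d}\to c^{-1/d}$ and averaging over the outer i.i.d.\ draws $\{\homega_1^i\}$ restricted to $\Omega_2^\circ$ via the strong law of large numbers produces the target limit $n_1^{1/d}T_1 \to c^{-1/d}\Gamma(1+1/d) V_d^{-1/d}\int_{\Omega_2^\circ} f_1(x)f_2(x)^{-1/d}dx$. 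The mirror computation for $T_2$ proceeds identically with the roles of $(1,2)$ swapped, producing $\Gamma(1+1/d) V_d^{-1/d}\int_{\Omega_1^\circ} f_2(x)f_1(x)^{-1/d}dx$ without a $c$ factor, and summing the two limits recovers the right-hand side of (\ref{eq:upper_bound}).

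The main obstacle is upgrading these pointwise/distributional statements into the uniform $L^1$ convergence demanded by the theorem. The $(1+\epsilon)$-moment hypotheses supply uniform integrability of the rescaled nearest-neighbor distances across $n$, enabling the exchange of limit and expectation at the inner conditional level; the global integrability conditions $\int_{\Omega_k^\circ} f_{3-k}(x) f_k(x)^{-1/d} dx <\infty$ guarantee that the limiting integrals are finite and that the outer empirical averages converge in $L^1$ by a triangular-array dominated-convergence argument. The delicate step is constructing, uniformly in $n$, a dominating envelope for the random functions $x\mapsto n_2^{1/d}\min_j c(x,\homega_2^j)$ near the decision boundary where the densities may approach zero; this requires a careful truncation matching the $(1+\epsilon)$-moment growth rate to split the integration into a uniformly controlled bulk and a vanishing tail.
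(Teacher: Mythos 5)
Your decomposition of $\sup_{\bm\alpha\in\cA}G_{n_1,n_2}(\bm\alpha)$ into the two cross-sample nearest-neighbor statistics $T_1$ and $T_2$, the Poisson-limit heuristic giving $n^{1/d}\,\E[\min_i\lVert x-X_i\rVert]\to\Gamma(1+1/d)(V_d f(x))^{-1/d}$, the role of the $(1+\epsilon)$-moment hypothesis as the uniform-integrability input in the conditioning step, and the bookkeeping that isolates the $c^{-1/d}$ prefactor all match the paper's argument, which packages the conditional nearest-neighbor asymptotic in a lemma adapted from Penrose and Yukich.

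The genuine gap is in the sentence ``averaging over the outer i.i.d.\ draws $\{\homega_1^i\}$ restricted to $\Omega_2^\circ$ via the strong law of large numbers produces the target limit.'' The SLLN does not apply here: the summands $Z_i := \min_{j:\;\homega_2^j\in\Omega_2^\circ}c(\homega_1^i,\homega_2^j)$ are not independent across $i$, since every one of them depends on the same inner sample $\{\homega_2^j\}$. Your conditioning argument proves $\E[n_1^{1/d}T_1]\to$ the claimed constant, but the theorem asserts $L^1$ convergence of $n_1^{1/d}T_1$ to a constant, which additionally requires convergence in probability; your proposed ``triangular-array dominated-convergence'' with a truncated envelope supplies integrability (uniform integrability), not concentration. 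The paper flags exactly this dependence as the key obstacle and resolves it with the spatial coupling argument behind Penrose and Yukich's weak law of large numbers for stabilizing functionals: nearest-neighbor distances of two well-separated outer points are determined by disjoint regions of the inner point process, so $Z_i$ and $Z_{i'}$ are asymptotically decorrelated and the empirical mean concentrates. Without that stabilization step (or an equivalent second-moment/Efron--Stein variance bound on $T_1$ as a function of the shared inner sample), the outer average is not shown to converge to a constant in $L^1$, so the proof as sketched is incomplete at precisely the point the paper labels as the challenge.
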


The assumptions on the true data-generating densities resemble the assumptions required for computing the nearest neighbor distances in \cite{evans2002asymptotic,penrose2011laws,penrose2003weak}. 
Under these assumptions, the weak law of large numbers is applied to the right-hand side of \eqref{eq:bound_dist_text}.
They can be satisfied under several scenarios, which includes but not limited to: (i) the set $\Omega_1^\circ, \Omega_2^\circ$ are both a finite union of convex bounded sets with non-empty interior and the restricted density $f_1|_{\Omega_1^\circ},f_2|_{\Omega_2^\circ}$ are bounded away from zero, and (ii) the restricted densities satisfy that for some $r> d/(d-1)$, we have $\int_{\Omega_j^\circ} \left\Vert x \right\Vert_2^r f_k|_{\Omega_j^\circ}(x)dx < \infty,k,j=1,2$ \cite{penrose2011laws}.

The first component on the right-hand side of \eqref{eq:upper_bound} equals
the limit of the expectation of $n_2^{1/d} \min_{1\le i\le n_2}\norm{x-x_i}$, where $x\sim f_{1}|_{\Omega_2^\circ}$ and $x_i\sim f_{2}|_{\Omega_2^\circ}$; similar for the second component.
It is computed by a conditioning argument where we condition on the random variable with respect to which we compute its nearest-neighbor distance, following a same argument as in \cite[Lemma~3.2]{penrose2003weak}.
Observe that  $\int_{\Omega_2^\circ} f_1/f_2^{1/d} dx = \int_{\Omega_2^\circ} (f_1/f_2)^{1/d} f_1^{\frac{d-1}{d}} dx $.
Hence it depends on the true densities and the value will be smaller if the density $f_1$ is relatively smaller on the set $\Omega_2^\circ$, and if the density ratio $f_1/f_2$ is close to 0 (note that it is always less than or equal to 1 on $\Omega_2^\circ$). This indicates that our choice of the radii tends to be smaller for distributions that are more different and thus it would be easier to distinguish between them.

Based on our principle, Theorem \ref{thm:radius} shows that our choice of the radii will be of the order $\mathcal O(n_1^{-1/d})$ under a balanced sample size regime. Since our framework yields a non-parametric test, this order is consistent with other non-parametric methods, and represents only the worst-case scenario and may be improved if additional conditions on the true data-generating distributions are imposed.
We would like to point out that although the same order can be obtained using the concentration principle that the uncertainty sets contain true distributions with high probability  \cite{canas2012learning,fournier2015rate,esfahani2015data}, our bound in \eqref{eq:upper_bound} provides a more informative constant term that involves the density ratio of the two underlying distributions; while the constant term obtained from the concentration principle would not involve any relationship between the two underlying distributions. 

Moreover, we remark that if the support $\Omega_1^\circ,\Omega_2^\circ$ are compact convex sets and the restricted densities $f_1|_{\Omega_1^\circ},f_2|_{\Omega_2^\circ}$ are continuous, bounded away from zero, and has bounded partial derivatives, then the rate of convergence has been provided explicitly in \cite{evans2002asymptotic}: for all $0< \rho < 1/d$, we have that as $N\rightarrow \infty$, the higher order terms on the right-hand side of \eqref{eq:upper_bound} would be $\mathcal O\left(n_1^{-(1/d-\rho)}\right)$.

We also remark that although we adopt a similar principle as used in \cite{blanchet2019robust,si2020quantifying} by considering the profile function $\profile_{n_1,n_2}$, the proof in our case is much more challenging, because: (1) the uncertainty set here involves the empirical samples from two classes instead of one uncertainty set; (2) the introduced variable ${\bm\alpha}_1,{\bm\alpha}_2$ are functions in the continuous samples space instead of a finite-dimensional vector, thus the optimality condition is not a simple first-order condition but involves inequalities yielding from variational principle, resulting in an additional constraints for solving $\profile_{n_1,n_2}$. Thus, we develop quite different analytical techniques to obtain the results. Details can be found in Appendix~\ref{proof:radius}.

\section{Numerical Experiments}\label{sec:experiment}

In this section, we present several numerical experiments to demonstrate the good performance of our method.

\subsection{Synthetic data: Testing Gaussian mixtures}

Assume the dimension is $100$ and the samples under two hypotheses are generated from Gaussian mixture models (GMM) following the distributions $0.5\mathcal N(0.4{e},I_{100}) + 0.5\mathcal N(-0.4{e},I_{100})$ and $0.5\mathcal N(0.4{f},I_{100}) + 0.5\mathcal N(-0.4{f},I_{100})$, respectively. Here ${e} \in \mathbb{R}^{100}$ is a vector with all entries equal to $1$, and ${f} \in \mathbb{R}^{100}$ is a vector with the first $50$ entries equal to 1 and remaining $50$ entries equal to $-1$. Consider a setting with a small number of training samples $n_1 = n_2 = 10$, and then test on 1000 new samples from each mixture model. The radius of the uncertainty set and the kernel bandwidth are determined by cross-validation. 


\begin{table}[ht]\small\setlength\tabcolsep{6pt}
\centering
\caption{ GMM data, 100-dimensional, comparisons averaged over 500 trials.} \label{tab:risk_compare_gmm_highdim}
\renewcommand\arraystretch{1.1}
\begin{tabular}{cccccc}
\specialrule{.08em}{0em}{0em}
\# observation ($m$) & Ours  & GMM & Logistic & Kernel SVM & 3-layer NN \\
\specialrule{.08em}{0em}{0em}
1 & \bf{0.2145} & 0.2588 &  0.4925 & 0.3564 &  0.4164  \\
2 & \bf{0.2157}  & 0.2597 &  0.4927 & 0.3581 & 0.4164  \\
3 & \bf{0.1331}  &  0.1755 & 0.4905 &  0.3122  & 0.3796 \\
4 & \bf{0.1329}  & 0.1762 & 0.4905 & 0.3129 & 0.3808 \\
5 & \bf{0.0937} & 0.1310  &  0.4888 & 0.2877  & 0.3575 \\
6 &\bf{0.0938}  & 0.1315  & 0.4881 & 0.2893 & 0.3570 \\
7 & \bf{0.0715}  & 0.1034  & 0.4880 & 0.2727 & 0.3399 \\
8 &\bf{0.0715}  & 0.1038  & 0.4876  & 0.2745  & 0.3401 \\
9 & \bf{0.0579} & 0.0850   & 0.4873 & 0.2634  & 0.3264 \\
10 & \bf{0.0578} &  0.0851 & 0.4874  & 0.2641 & 0.3267  \\
\specialrule{.08em}{0em}{0em}
\end{tabular}
\end{table}

We compare the performance of the proposed approach with several commonly used classifiers. They are comparable since binary classifiers can be used for deciding hypotheses, although they are designed with different targets. The competitors include the Gaussian Mixture Model (GMM), logistic regression, kernel support vector machine (SVM) with radial basis function (RBF) kernel, and a three-layer perceptron \cite{friedman2001elements} to illustrate the performance of neural networks. The results are summarized in Table~\ref{tab:risk_compare_gmm_highdim}, where the first column corresponds to the single observation scheme, while other columns are results using multiple observations, with the number of observations $m$ varying from $2$ to $10$. We use the majority rule for GMM, logistic regression, kernel SVM, and three-layer neural networks (NN) for testing batch samples. Note that there are over 2500 parameters in the neural network model with two hidden layers (50 nodes in each layer), which is challenging to learn when the training data size is small. Moreover, given only ten samples per class, estimating the underlying Gaussian mixture model is unrealistic, so that any parametric methods will suffer. The results demonstrate that when there is a small sample size, our minimax test outperforms other methods. 


\subsection{Real data: MNIST handwritten digits classification}

We also compare the performance using MNIST handwritten digits dataset \cite{lecun1998gradient}. The full dataset contains 70,000 images, from which we randomly select five training images from each class.  We solve the optimal randomized test from \eqref{eq:wass vs wass:dual} with the radii parameters chosen by cross-validation. For the batch test setting, we divide test images from the same class into batches, each consisting of $m$ images. The decision for each batch is made using the majority rule for the optimal test in Section~\ref{sec:multi-obs}, as well as for logistic regression and SVM. We repeat this process to 500 randomly selected batches, and the average misclassification rates are reported in Table~\ref{tab:risk_compare_mnist_Kobs}.  The results show that our method significantly outperforms logistic regression and SVM. Moreover, the performance gain is higher in the batch test setting:  the errors decay quickly as $m$ increases. Note that the neural network-based deep learning model is not appropriate for this setting since the data-size is too small to train the model.

\begin{table}[ht]\small\setlength\tabcolsep{6pt}
\renewcommand\arraystretch{1.1}
\centering
\caption{ MNIST data, comparisons averages over 500 trials.}
\label{tab:risk_compare_mnist_Kobs}
\begin{tabular}{cccc}
\specialrule{.08em}{0em}{0em}
\# observation ($m$) & Ours  & Logistic & SVM  \\
\specialrule{.08em}{0em}{0em}
1 & \bf{0.3572} & 0.3729  &  0.3674   \\
2 & \bf{0.3631}   & 0.3797  &  0.3712    \\
3 & \bf{0.2772}   &  0.2897 & 0.2840  \\
4 &  \bf{0.2122}  & 0.2239 & 0.2169   \\
5 & \bf{0.1786} & 0.1882   &   0.1827 \\
6 &\bf{0.1540} & 0.1643  &  0.1588  \\
7 &  \bf{0.1347}  & 0.1446  & 0.1391 \\
8 &\bf{0.1185}  & 0.1276  & 0.1222  \\
9 & \bf{0.1063}  & 0.1160    & 0.1119  \\
10 & \bf{0.0960} &  0.1057  & 0.1010  \\
\specialrule{.08em}{0em}{0em}
\end{tabular}
\end{table}

\subsection{Application: Human activity detection}

In this subsection, we apply the optimal test for human activity detection from sequential data, using a dataset released by the Wireless Sensor Data Mining Lab in 2013 \cite{lockhart2011design, weiss2012impact, kwapisz2011activity}. In this dataset, 225 users were asked to perform specific activities, including walking, jogging, stairs, sitting, standing, and lying down; the data were recorded using accelerometers. Our goal is to detect the change of activity in real-time from sequential observations. Since it is difficult to build precise parametric models for distributions of various activities, traditional parametric change-point detection methods do not work well. We compare the proposed method with a standard nonparametric multivariate sequential change-point detection procedure based on the Hotelling's $T$-squared statistic \cite{montgomery2009introduction}. The raw data consists of sequences of observations for one person; each sequence may contain more than one change-points, and the time duration for each activity is also different. For this experiment, we only consider two types of transitions of activities: walking to jogging and jogging to walking. We extract 360 sequences of length 100 such that each sequence only contains one change-point.

We construct a change-point detection procedure using our optimal test as follows. Denote the data sequence as $\{\omega_t,t=1,2,\ldots\}$. At any possible change-point time $t$, we treat samples in time windows $[t-w,t-1]$ and $[t+1,t+w]$ as two groups of training data and find the LFDs $\{P_1^\ast,P_2^\ast\}$ by solving the convex problem in Equation \eqref{eq:wass vs wass:dual}. Then we calculate the detection statistic as $P_2^\ast(\omega_t) - P_1^\ast(\omega_t)$, inspired by the optimal detector in Lemma \ref{lemma:inf_phi}. 
We couple this test statistic with the CUSUM-type recursion \cite{page1954continuous}, which can accumulate change and detects small deviations quickly. The recursive detection statistic is defined as $S_t = \max\{0, S_{t-1} + P_2^\ast(\omega_t) - P_1^\ast(\omega_t)\}$, with $S_0=0$. A change is detected when $S_t$ exceeds a pre-specified threshold for the first time. Such scheme is similar to the combination of convex optimization solution and change-point detection procedure \cite{YangISIT2017}. 
In the experiment, we set the window size $w=10$ and choose the same radii for uncertainty sets using cross-validation. The Hotelling's $T$-squared procedure is constructed similarly. Using historical samples, we estimate the nominal (pre-change) mean $\widehat\mu$ and covariance $\widehat\Sigma$. The Hotelling's $T$-squared statistics at time $t$ is defined as $(\omega_t - \widehat\mu)^T\widehat\Sigma^{-1}(\omega_t - \widehat\mu)$ and the Hotelling procedure uses a CUSUM-type recursion: $H_t = \max\{0, H_{t-1}+(\omega_t - \widehat\mu)^T\widehat\Sigma^{-1}(\omega_t - \widehat\mu)\}$.

We compare the expected detection delay (EDD) versus Type-I error. Here EDD is defined as the average number of samples that a procedure needs before detects a change after it has occurred, which is a commonly used metric for sequential change-point detection \cite{xie2013sequential}. The Type-I error corresponds to the probability of detecting a change when there is no change. We consider a range of thresholds such that the corresponding Type-I error is from 0.05 to 0.35. The results in Figure~\ref{fig:detect_compare} show that our test significantly outperforms Hotelling's $T$-squared procedure in detecting the change quicker under the same Type-I error. 

\begin{figure}[!ht]
\begin{center}
\begin{tabular}{cc}
\includegraphics[width = 0.39\textwidth]{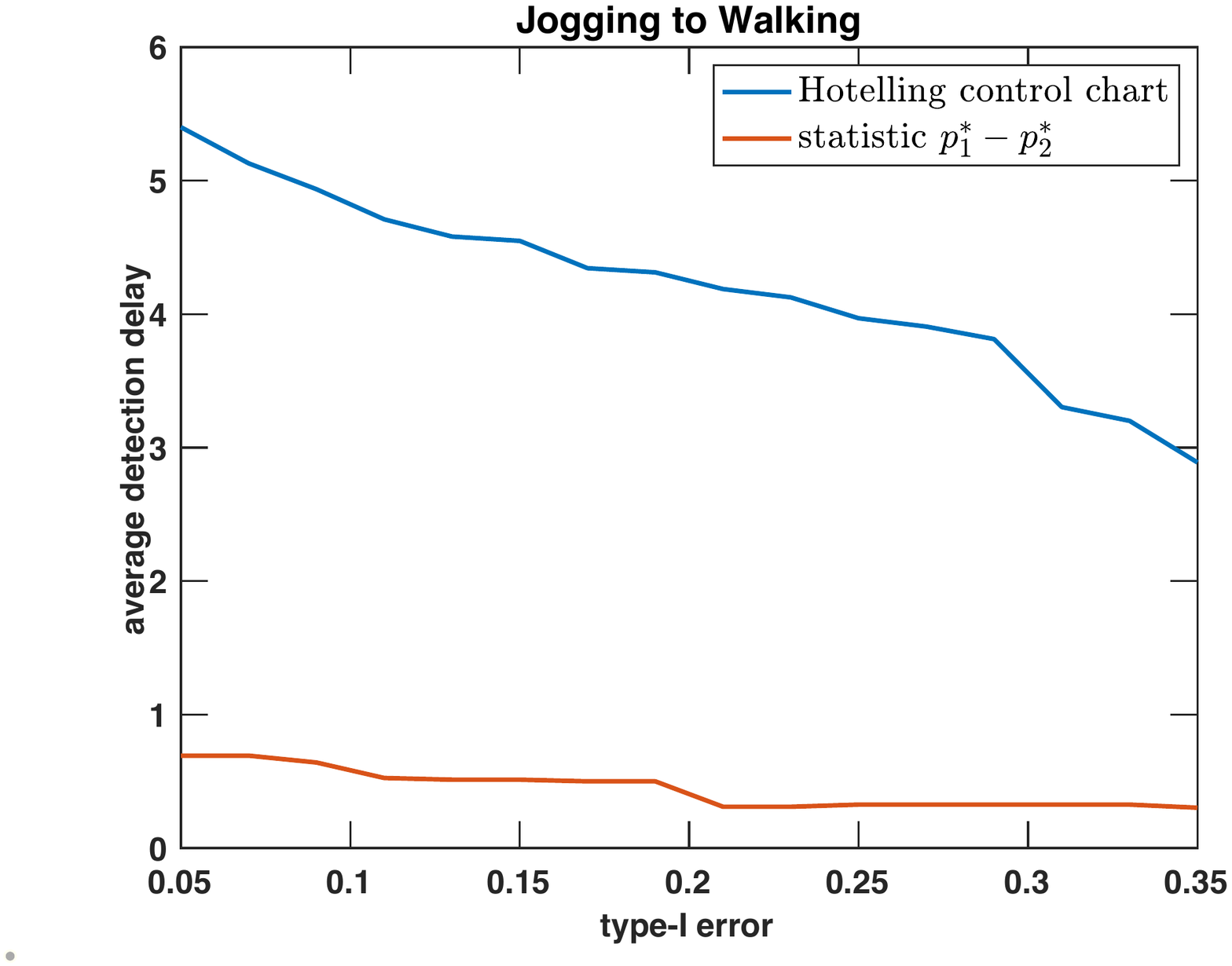} & \includegraphics[width = 0.4\textwidth]{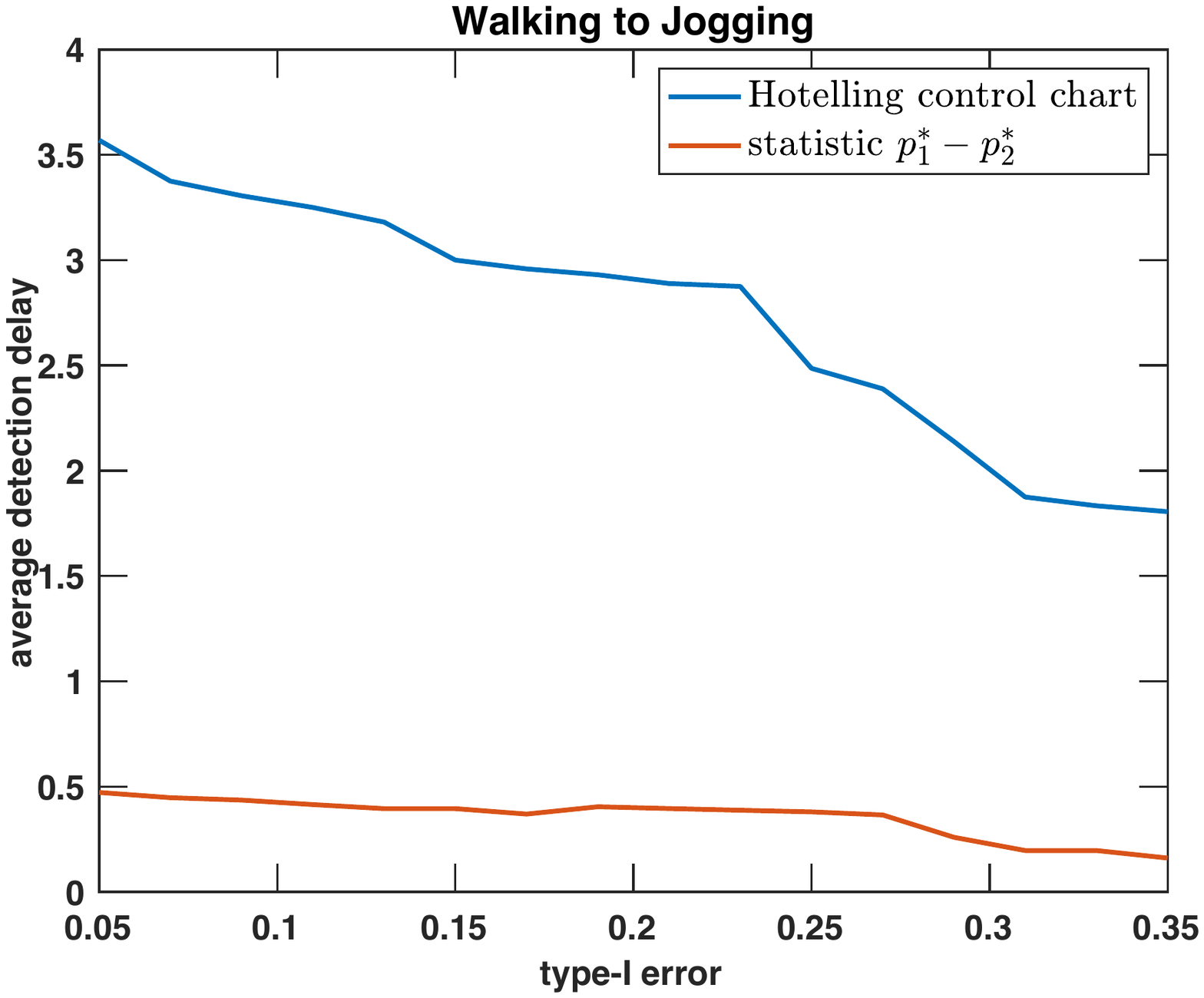}
\end{tabular}
\end{center}
\caption{Comparison of the Expected Detection Delay (EDD) of our test with the Hotelling's $T$-squared procedure for detecting two type of activity transitions: jogging to walking (left) and walking to jogging (right). 
}
\label{fig:detect_compare}
\end{figure}

%
%
%

\section{Conclusions and Discussions}\label{sec:conc}

In this paper, we present a new approach for robust hypothesis testing when there are limited  ``training samples'' for each hypothesis. We formulate the problem as a minimax hypothesis testing problem to decide between two disjoint sets of distributions centered around empirical distributions in Wasserstein metrics. This formulation, although statistically sound -- can be treated as a ``data-driven'' version of Huber's robust hypothesis test, is computationally challenging since it involves an infinitely dimensional optimization problem. Thus, we present a computationally efficient framework for solving the minimax test, revealing the optimal test's statistical meaning. We also prove how to extend the minimax test from empirical support to the whole space and use it for the ``batch'' test settings. Moreover, we characterize the radius selection by providing an asymptotic upper bound for the sufficient radii and shed light on the optimal test's generalization property. We demonstrate the good performance of the proposed robust test on simulated and real data. 

The method can be kernelized to handle more complex data structures (e.g., the observations are not real-valued). The kernelization can be conveniently done by replacing the metric $c(\cdot,\cdot)$ used in solving the optimal test \eqref{eq:wass vs wass:dual} with other distances metrics between features after kernel transformation. Take the Euclidean norm as an example. Given a kernel function $\mathcal K(\cdot,\cdot)$ that measures similarity between any pair of data, the pairwise norm $c(\omega^l,\omega^m)=\norm{\omega^l-\omega^m}$ in \eqref{eq:wass vs wass:dual} can be replaced with the kernel version distance $\mathcal K(\omega^l, \omega^m)$. Moreover, this means that the proposed framework can be combined with feature selection and neural networks to enhance its performance in practice for complex datasets.

\section*{Acknowledgements}

The work of Liyan Xie and Yao Xie are funded by NSF CAREER CCF-1650913, DMS-1938106, DMS-1830210, and CMMI-2015787.

\bibliographystyle{plain}
\bibliography{RobustChangeDetection_Was,CovDetection}


\clearpage
\appendix

\section{Proofs for Section~\ref{sec:convex detector}}

\subsection{Proof of Lemma~\ref{lemma:inf_phi}}\label{proof:inf_phi}
  Note that the probability measures $P_1$, $P_2$ are absolutely continuous with respect to $P_1+P_2$, hence we have
    \begin{equation}\label{eq:int_inf}
      \begin{aligned}
     & \inf_{\pi} \Phi(\pi;P_1,P_2) \\
      =& \inf_{\pi} \int_{\Omega} \big[(1-\pi(\omega))\textstyle {\frac{dP_1}{d(P_1+P_2)}(\omega)} + \pi(\omega)\textstyle {\frac{dP_2}{d(P_1+P_2)}(\omega)} \big]d(P_1+P_2)(\omega) \\
       =& \inf_{\pi} \int_{\Omega_0} \big[(1-\pi(\omega))\textstyle {\frac{dP_1}{d(P_1+P_2)}(\omega)} + \pi(\omega)\textstyle {\frac{dP_2}{d(P_1+P_2)}(\omega)} \big]d(P_1+P_2)(\omega) \\
      = & \int_{\Omega_0} \inf_{0\le x\le 1} \big[(1-x)\textstyle {\frac{dP_1}{d(P_1+P_2)}(\omega)} + x\textstyle {\frac{dP_2}{d(P_1+P_2)}(\omega)} \big] d(P_1+P_2)(\omega),
      \end{aligned}
    \end{equation}
  where the second equality holds because the integral depends only on the subset $\Omega_0 := \big\{\omega\in\Omega: \textstyle 0<\frac{dP_k}{d(P_1+P_2)}(\omega) < 1,k=1,2 \big\}$, on which $P_1$, $P_2$ are absolutely continuous with respect to each other; the third equality is due to Lemma \ref{lem:change}, with $\cal M$ being the set of measurable functions and $f(x,\omega)= \big[(1-x)\textstyle {\frac{dP_1}{d(P_1+P_2)}(\omega)} + x\textstyle {\frac{dP_2}{d(P_1+P_2)}(\omega)} \big]\chi_{[0,1]}(x) $, where $\chi_{[0,1]}(x)=1$ if $x\in[0,1]$ and $\infty$ otherwise. 

  For any $\omega$, the infimum $\pi^*(\omega)$ of the inner minimization in \eqref{eq:int_inf} is attained at $0$ or $1$. Therefore, for any $\omega\in\Omega$,
  \[
    (1-\pi^*(\omega))\textstyle {\frac{dP_1}{d(P_1+P_2)}(\omega)} + \pi^*(\omega)\textstyle {\frac{dP_2}{d(P_1+P_2)}(\omega)} =
    \min\left\{\frac{dP_1}{d(P_1+P_2)}(\omega),\frac{dP_2}{d(P_1+P_2)}(\omega)\right\}.
  \]
    This completes the proof.

\subsection{Proof of Lemma~\ref{lemma:wass vs wass}}\label{proof:wass vs wass}
  Denote by $L^1(\mu)$ the space of all integrable functions with respect to the measure $\mu$. Using Lagrangian and Kantorovich's duality (Lemma~\ref{lem:kan}), we rewrite the problem as
    \allowdisplaybreaks
    \[\label{eq:kan_duality}
      \begin{multlined}
        \sup_{P_1\in\calP_1,P_2\in\calP_2} \psi(P_1,P_2)\\
       \shoveleft{=  \sup_{\substack{P_1\in \scrP(\Omega) \\ P_2\in\scrP(\Omega)}}\inf_{\lambda_1,\lambda_2 \geq 0 } \Bigg\{ \psi(P_1,P_2) + \sum_{k=1}^2\lambda_k\theta_k- \sum_{k=1}^2\!\lambda_k \sup_{\substack{u_k\in \R^{n_k}\\v_k\in L^1(P_k)}}  \bigg\{\frac{1}{n_k}\sum_{i=1}^{n_k} u_k^i  }\\
        \shoveright{+ \int_\Omega v_k dP_k: u_k^i+v_k(\omega)\leq c(\omega,\homega_k^i),\ \forall 1\leq i\leq n_k, \forall \omega\in\Omega \bigg\}\Bigg\}} \\
     \shoveleft{= \sup_{\substack{P_1\in \scrP(\Omega) \\ P_2\in\scrP(\Omega)}} \inf_{\substack{\lambda_1,\lambda_2 \geq 0\\u_k\in \R^{n_k}\\v_k\in L^1(P_k)}} \Bigg\{\psi(P_1,P_2) + \sum_{k=1}^2\lambda_k\theta_k - \sum_{k=1}^2 \lambda_k\bigg( \frac{1}{n_k}\sum_{i=1}^{n_k} u_k^i + \int_\Omega v_k dP_k \bigg) :} \\
    \shoveright{ u_k^i+v_k(\omega)\leq c(\omega,\homega_k^i),\ \forall 1\leq i\leq n_k, \forall \omega \in \Omega \Bigg\} } \\
    \shoveleft{ = \sup_{\substack{P_1\in \scrP(\Omega) \\ P_2\in\scrP(\Omega)}}  \inf_{\substack{\lambda_1,\lambda_2 \geq 0\\u_k\in \R^{n_k}\\v_k\in L^1(P_k)}} \Bigg\{\psi(P_1,P_2) +  \sum_{k=1}^2\lambda_k\theta_k - \sum_{k=1}^2 \bigg( \frac{1}{n_k} \sum_{i=1}^{n_k} u_k^i + \int_\Omega v_k dP_k \bigg): } \\
    u_k^i+v_k(\omega)\leq \lambda_k c(\omega,\homega_k^i), \ \forall 1\leq i\leq n_k, \forall \omega \in \Omega \Bigg\},
    \end{multlined}
  \]
  where the second equality holds by combining the innermost supreme problem with the infimum problem; and the third equality holds by replacing $\lambda_k u_k^i$ with $u_k^i$ and $\lambda_k v_k$ with $v_k$ (note that such change of variable is valid even when $\lambda_k=0$). 
  Furthermore, since the objective function is non-increasing in $v_k$, we can replace $v_k$ with $\min_{1\leq i\leq n_k} \{\lambda_k c(\omega,\homega_k^{i}) - u_k^{i}\}$ without changing the optimal value.
  Interchanging $\sup$ and $\inf$ yields
  \begin{equation}\label{eq:wass vs wass_interchange}
    \begin{multlined}
    \sup_{P_1\in\calP_1,P_2\in\calP_2}  \psi(P_1,P_2)\\
     \shoveleft{\leq\inf_{\substack{\lambda_1,\lambda_2 \geq 0\\u_k\in \R^{n_k}}}  \bigg\{\sum_{k=1}^2 \lambda_k\theta_k -  \sum_{k=1}^2 \frac{1}{n_k} \sum_{i=1}^{n_k} u_k^i + \sup_{\substack{P_1\in \scrP(\Omega) \\ P_2\in\scrP(\Omega)}}  \Big\{  \psi(P_1,P_2)} \\   
     - \int_\Omega \sum_{k=1}^2 \min_{1\leq i\leq n_k} \{\lambda_k c(\omega,\homega_k^{i}) - u_k^{i}\} dP_k \Big\} \bigg\}.        
     \end{multlined}
  \end{equation}
 
  Now let us study the inner supremum in \eqref{eq:wass vs wass_interchange}. For a given distribution $(P_1,P_2)$ and any $\omega\in \supp P_1\cup\supp P_2$, let $i_k(\omega) = \argmin_{i}\{\lambda_k c(\omega,\homega_k^{i}) - u_k^{i}\}$, $k=1,2$, set
  \[
  \begin{aligned}
    T(\omega) 
    :=& \begin{cases} \homega_1^{i_1(\omega)}, & \text{if } \lambda_1\textstyle \frac{dP_1}{d(P_1+P_2)}(\omega) \geq \lambda_2\textstyle \frac{dP_2}{d(P_1+P_2)}(\omega),  \\
    \homega_2^{i_2(\omega)}, & \text{if } \lambda_1\textstyle \frac{dP_1}{d(P_1+P_2)}(\omega) < \lambda_2\textstyle \frac{dP_2}{d(P_1+P_2)}(\omega),
    \end{cases}
    \end{aligned}
  \]
  whence
  \[
   T(\omega) \in \argmin_{\omega'\in\Omega} \bigg\{ 
    \sum_{k=1}^2 \big[\lambda_k c(\omega',\homega_k^{i_k(\omega)}) - u_k^{i_k(\omega)}\big] \textstyle \frac{dP_k}{d(P_1+P_2)}(\omega) \bigg\}.
  \]
  By definition we have $T(\omega)\in\hOmega$. 
  Define another solution $(P'_1,P'_2)$ such that $P'_k(B)=P_k\{\omega\in\Omega:T(\omega)\in B\}$ for any Borel set $B\subset\hOmega$.
  It follows that
  \[\begin{aligned}
     &\sum_{k=1}^2  \int_{\hOmega} \min_{1\leq i\leq n_k} \{\lambda_k c(\omega,\homega_k^{i}) - u_k^{i}\} dP'_k(\omega) \\
     = & \sum_{k=1}^2  \int_{\Omega}  \min_{1\leq i\leq n_k} \{\lambda_k c(T(\omega),\homega_k^{i}) - u_k^{i}\} dP_k(\omega)\\
     \leq & \sum_{k=1}^2 \int_\Omega \big(\lambda_k c(T(\omega),\homega_k^{i_k(\omega)}) - u_k^{i_k(\omega)}\big) dP_k(\omega) \\
     \leq &\sum_{k=1}^2   \int_\Omega \big(\lambda_k c(\omega,\homega_k^{i_k(\omega)}) - u_k^{i_k(\omega)}\big) dP_k(\omega).
  \end{aligned}
  \]
  In addition, by a simple fact that $ \sum_i \min\{x_i,y_i\}\leq \min \{\sum_ix_i, \sum_i y_i\}$ for any series $\{x_i,y_i\}$, 
  we have
  \[
    \begin{aligned}
     \psi(P_1,P_2)  & = \int_{\Omega}\min\left\{\frac{dP_1}{d(P_1+P_2)}(\omega),\frac{dP_2}{d(P_1+P_2)}(\omega)\right\}\, d(P_1+P_2)(\omega) \\
     & \leq \sum_{\homega\in\hOmega} \min\{P_1\{\omega\in\Omega: T(\omega)=\homega\} ,P_2\{\omega\in\Omega: T(\omega)=\homega\}  \}\\
     & = \sum_{\homega\in\hOmega} \min\{P_1'(\homega) ,P_2'(\homega) \}\\
     & =  \psi(P_1',P_2').
  \end{aligned}
  \]
  Hence $(P'_1,P'_2)$ yields an objective value no worse than $(P_1,P_2)$ for the inner supremum in \eqref{eq:wass vs wass_interchange}.
  This suggests that in order to solve the inner supremum of \eqref{eq:wass vs wass_interchange}, it suffices to only consider $(P_1,P_2)$ with $\supp P_1 \subset \hOmega$ and $\supp P_2 \subset \hOmega$.

  For $l=1,\ldots,n$, set $p_k^l = P_k(\homega^l)$, and note that $\gamma_k\in\Gamma(P_k,Q_{k,n}) $ can be identified with a non-negative matrix $\gamma_k\in\R_+^{n\times n}$ with each column and row summing up to 1.
  Thus, the inner supremum in \eqref{eq:wass vs wass_interchange} can now be equivalently written as
  \[
   \sup_{\substack{p_1,p_2\in\R_+^{n}\\\sum_l p_1^l=1,\,  \sum_l p_2^l=1}} \bigg\{ \sum_{l=1}^n \min\big\{ p_1^l, p_2^l\big\} - \sum\limits_{k=1}^2 \sum\limits_{l=1}^{n} p_k^l \min_{1\leq i\leq n_k} \{\lambda_k c(\homega^l,\homega_k^i) - u_k^i\} \bigg\}.
  \]
  It follows that
  \[
    \begin{multlined}
      \sup_{P_1\in\calP_1,P_2\in\calP_2} \psi(P_1,P_2)\\
      \shoveleft{ \leq \inf_{\lambda_1,\lambda_2\geq0} \bigg\{ \sum_{k=1}^2\lambda_k\theta_k -  \sum_{k=1}^2 \frac{1}{n_k} \sum_{i=1}^{n_k} u_k^i + \sup_{\substack{p_1,p_2\in\R_+^{n}\\\sum_l p_1^l=1,\,  \sum_l p_2^l=1}} \bigg\{ \sum_{l=1}^n \min\big\{ p_1^l, p_2^l\big\} } \\ 
    - \sum_{k=1}^2 \sum_{l=1}^{n} p_k^l \min_{1\leq i\leq n_k} \{\lambda_k c(\homega^l,\homega_k^i) - u_k^i\}\bigg\}.
    \end{multlined}
  \]
  Applying the Lagrangian duality for finite-dimensional convex programming on the right-hand side yields
  \[
     \sup_{P_1\in\calP_1,P_2\in\calP_2} \psi(P_1,P_2) \leq  \sup_{\substack{P_1\in\bcalP_1,P_2\in\bcalP_2}} \psi(P_1,P_2),
  \]
  where $\bcalP_k := \calP_k \cap \scrP(\hOmega)$, $k= 1, 2$.
  Observe that both sides have the same objective function, but the feasible region of the right-hand side is a subset of that of the left-hand side, and thus the right-hand side should be no greater than the left-hand side, i.e., the above inequality should hold as equality.
  Thereby we complete the proof.

\subsection{Proof of Theorem~\ref{thm:opt_test}}\label{proof:opt_test}   

  Note that from Lemma \ref{lemma:wass vs wass}, we have
  \[\begin{aligned}
    \sup_{P_1\in \bcalP_1, P_2\in \bcalP_2} \inf_{\pi:\Omega\to[0,1]}  \Phi(\pi;P_1,P_2) 
    &= \sup_{P_1\in \calP_1, P_2\in \calP_2} \inf_{\pi:\Omega\to[0,1]}  \Phi(\pi;P_1,P_2) \\
    & \leq \inf_{\pi:\Omega\to[0,1]}  \sup_{P_1\in \calP_1, P_2\in \calP_2} \Phi(\pi;P_1,P_2).
    \end{aligned}
  \]
  Let us prove the other direction.

  To begin with, we identify $\hpi\in[0,1]^n$ with a function on $\hOmega$.
  Using Lemma \ref{lemma:DRO}, we have
  \begin{equation}\label{eq:DRO:hat}
    \begin{aligned}
    \sup_{P_1\in\bcalP_1} \mathbb E_{P_1} [1-\hpi] 
    & =  \inf_{\lambda_1\geq0} \bigg\{\lambda_1 \theta_1 +  \frac{1}{n_1} \sum_{l=1}^{n_1} \max_{1\leq m\leq n} \{1-\hpi_m - \lambda_1 c(\homega^l,\homega^m)\} \bigg\},\\
    \sup_{P_2\in\bcalP_2}  \mathbb E_{P_2}[\hpi] & = \inf_{\lambda_2\geq0} \bigg\{\lambda_2 \theta_2 +  \frac{1}{n_2} \sum_{l=1}^{n_2} \max_{1\leq m\leq n} \{\hpi_m - \lambda_2 c(\homega^l,\homega^m)\} \bigg\}. 
    \end{aligned}
  \end{equation}
  Let $\lambda_1^*$ and $\lambda_2^*$ be respectively the minimizers of the two problems in \eqref{eq:DRO:hat}.
  Observe that the right-hand sides of \eqref{eq:DRO:hat} and \eqref{eq:opt_test_constr} are identical.
  Hence \eqref{eq:opt_test_constr} implies that $\hpi^\ast$ defined in the statement of Theorem \ref{thm:opt_test} satisfies
  \[
    \E_{P_1^\ast}[1-\hpi^\ast] = \sup_{P_1\in\bcalP_1} \mathbb E_{P_1} [1-\hpi^\ast] ,\ \ 
    \E_{P_2^\ast}[\hpi^\ast] = \sup_{P_2\in\bcalP_2}  \mathbb E_{P_2}[\hpi^\ast],
  \]
  and thus 
  \begin{equation}\label{eq:saddle_point_hatOmega}
    \sup_{P_1\in \bcalP_1, P_2\in \bcalP_2}  \Phi(\hpi^\ast;P_1,P_2) = \sup_{P_1\in \bcalP_1, P_2\in \bcalP_2} \inf_{\hpi\in[0,1]^n}  \Phi(\hpi;P_1,P_2). 
  \end{equation}
  Hence $(\hpi^\ast;P_1^\ast,P_2^\ast)$ solves the above finite-dimensional convex-concave saddle point problem that always has an optimal solution, which verifies the well-definedness of $\hpi^\ast$.
  
  On the other hand, for the $\pi^\ast$ defined in the statement of Theorem \ref{thm:opt_test}, the optimization problem for finding worst-case risk are decoupled and admits the following equivalent reformulations (Lemma \ref{lemma:DRO})
  \begin{equation}\label{eq:subproblem_1}
    \begin{aligned}
    \sup_{P_1\in\calP_1}  \mathbb E_{P_1}[1-\pi^\ast(\omega)] & = \min_{\lambda_1\geq0}\left\{ \lambda_1 \theta_1 + \frac{1}{n_1}\sum_{i=1}^{n_1}\sup_{\omega\in\Omega}\left\{1-\pi^\ast(\omega) - \lambda_1 c(\omega,\homega_1^i)  \right\}  \right\},\\
    \sup_{P_2\in\calP_2}  \mathbb E_{P_2}[\pi^\ast(\omega)] & = \min_{\lambda_2\geq0}\left\{ \lambda_2 \theta_2 + \frac{1}{n_2}\sum_{i=1}^{n_2} \sup_{\omega\in\Omega}\left\{ \pi^\ast(\omega)-\lambda_2 c(\omega,\homega_2^i) \right\}  \right\}. 
    \end{aligned}
  \end{equation}    
  Comparing \eqref{eq:DRO:hat} and \eqref{eq:subproblem_1}, if we can prove $\pi^\ast$ satisfies 
  \begin{equation}\label{eq:strong dual:condition}
    \begin{aligned}
    &\sup_{\omega\in\Omega}\left\{1-\pi^\ast(\omega) - \lambda_1^* c(\omega,\homega_1^i)  \right\} \leq \max_{\omega\in\hOmega} \left\{1-\hpi^\ast(\omega) - \lambda_1^* c(\omega,\homega_1^i)\right\}, \ \forall 1\leq i\leq n_1,\\
    &\sup_{\omega\in\Omega}\left\{ \pi^\ast(\omega)-\lambda_2^* c(\omega,\homega_2^i) \right\}  \leq \max_{\omega\in\hOmega}\left\{\hpi^\ast(\omega) - \lambda_2^* c(\omega,\homega_2^i) \right\},\ \forall 1\leq i\leq n_2, 
    \end{aligned}
  \end{equation}
  then $\pi^\ast$ would be an optimal solution to \eqref{problem:HT_detector} since
  \[\begin{aligned}
    \inf_{\pi:\Omega\to[0,1]}  \sup_{P_1\in \calP_1, P_2\in \calP_2} \Phi(\pi;P_1,P_2) & \leq \sup_{P_1\in \calP_1, P_2\in \calP_2} \Phi(\pi^\ast;P_1,P_2)\\
    &\leq \sup_{P_1\in \bcalP_1, P_2\in \bcalP_2}  \Phi(\hpi^\ast;P_1,P_2) \\
    & = \sup_{P_1\in \calP_1, P_2\in \calP_2} \inf_{\pi:\Omega\to[0,1]}  \Phi(\pi^\ast;P_1,P_2).
    \end{aligned}
  \]

  To show \eqref{eq:strong dual:condition}, for $\pi^*$ restricted on the empirical support $\hOmega$, we have
  \[
    \begin{aligned}
    &\sup_{\omega\in\hOmega}\left\{1-\pi^\ast(\omega) - \lambda_1^* c(\omega,\homega_1^i)  \right\} = \max_{\omega\in\hOmega} \left\{1-\hpi^\ast(\omega) - \lambda_1^* c(\omega,\homega_1^i)\right\}, \ \forall 1\leq i\leq n_1,\\
    &\sup_{\omega\in\hOmega}\left\{ \pi^\ast(\omega)-\lambda_2^* c(\omega,\homega_2^i) \right\} = \max_{\omega\in\hOmega}\left\{\hpi^\ast(\omega) - \lambda_2^* c(\omega,\homega_2^i) \right\},\ \forall 1\leq i\leq n_2.
    \end{aligned}
  \]
  Indeed, this holds by construction $\pi^\ast(\omega)=\hpi^\ast(\omega)$ for $\omega\in\hOmega$.
  It remains to show \eqref{eq:strong dual:condition} also holds outside of $\hOmega$:
  \[\label{eq:strong dual:condition:not in hOmega}
    \begin{aligned}
    &\sup_{\omega\notin\hOmega}\left\{1-\pi^\ast(\omega) - \lambda_1^\ast c(\omega,\homega_1^i)  \right\} \leq \max_{\omega\in\hOmega} \left\{1-\hpi^\ast(\omega) - \lambda_1^\ast c(\omega,\homega_1^i)\right\}, \ \forall 1\leq i\leq n_1,\\
    &\sup_{\omega\notin\hOmega}\left\{ \pi^\ast(\omega)-\lambda_2^\ast c(\omega,\homega_2^i) \right\}  \leq \max_{\omega\in\hOmega}\left\{\hpi^\ast(\omega) - \lambda_2^\ast c(\omega,\homega_2^i) \right\},\ \forall 1\leq i\leq n_2. 
    \end{aligned}
  \]
  To prove this, note that it is equivalent to that $\forall \omega\notin\hOmega$: 
  \begin{equation}\label{eq:k1constraint}
    \begin{aligned}
    & \pi^*(\omega) \geq \min_{\homega\in\hOmega}\left\{\pi^*(\homega) + \lambda_1^* c(\homega,\homega_1^i)  \right\} - \lambda_1^* c(\omega,\homega_1^i) , \quad \forall i = 1,\ldots, n_1, \\
    & \pi^*(\omega) \leq \lambda_2^* c(\omega,\homega_2^j) -  \min_{\homega\in\hOmega}\left\{\lambda_2^* c(\homega,\homega_2^j) - \pi^*(\homega) \right\},\quad \forall j = 1,\ldots, n_2.
    \end{aligned}
  \end{equation}
  Observe that $\forall i=1,\ldots,n_1$ and  $\forall j=1,\ldots,n_2$, we have: 
  \[
    \begin{aligned}
      &\hphantom{\leq} \min_{\homega\in\hOmega}\left\{\pi^*(\homega) + \lambda_1^* c(\homega,\homega_1^i)  \right\} 
      + \min_{\homega\in\hOmega}\left\{\lambda_2^* c(\homega,\homega_2^j) - \pi^*(\homega) \right\}\\
      & \leq \begin{cases}
        \pi^\ast(\homega_2^j) + \lambda_1^\ast c(\homega_2^j,\homega_1^i) - \pi^\ast(\homega_2^j), & \lambda_1^\ast\leq \lambda_2^\ast,\\
        \pi^\ast(\homega_1^i) + \lambda_2^\ast c(\homega_1^i,\homega_2^j) - \pi^\ast(\homega_1^i), & \lambda_1^\ast > \lambda_2^\ast,
      \end{cases}\\
    & = \min\{\lambda_1^\ast,\lambda_2^\ast\} c(\homega_1^i,\homega_2^j) \\
    & \leq \lambda_1^\ast c(\omega,\homega_1^i) + \lambda_2^\ast c(\omega,\homega_2^j),\quad\forall\omega\in\Omega,
    \end{aligned}
  \]
  where we have used the triangle inequality of $c$.
  And we note that
  \[
  \min_{\homega\in\hOmega}\left\{\pi^*(\homega) + \lambda_1^* c(\homega,\homega_1^i)  \right\} - \lambda_1^* c(\omega,\homega_1^i)  \leq \pi^*(\homega_1^i) \leq 1,
  \]
  and 
  \[
  \lambda_2^* c(\omega,\homega_2^j) -  \min_{\homega\in\hOmega}\left\{\lambda_2^* c(\homega,\homega_2^j) - \pi^*(\homega) \right\} \geq \pi^*(\homega_2^j) \geq 0,
  \]
  since $\pi^\ast(\omega)=\hpi^\ast(\omega) \in [0,1]$ for $\omega\in\hOmega$. 
  Therefore we always have $l(\omega)\leq u(\omega)$ and \eqref{eq:k1constraint} always admits a feasible solution, as defined in the Theorem statement.

\subsection{Proof of Proposition~\ref{multi_obs_risk}} 
  
Given batch samples $\omega_1,\ldots,\omega_m$ sampled i.i.d. from the true distribution $P_1^\circ$, define Boolean random variables $\xi_i, 1\leq i \leq m$ as:
  \[
  \xi_i = \begin{cases} 1 & \pi_1(\omega_i) = 0; \\
  0 & \pi_1(\omega_i) = 1, \\
  \end{cases}
  \]
  more specifically, the random variable $\xi_i=1$ if and only if the test, as applied to observation $\omega_i$, rejects hypothesis $H_0$. 
  
  Further, by construction of the Majority test, if the hypothesis $H_0$ is rejected, then the number of $i$'s with $\xi_i$ = 1 is at least $m/2$. Thus, the probability to reject $H_0$ is not greater than the probability of the event: in $m$ random Bernoulli trials with probability $\epsilon^\ast$ of success, the total number of successes is $\geq m/2$. The probability of this event clearly does not exceed:
\[
  \sum_{m/2 \leq i \leq m} {m \choose i} (\epsilon^\ast)^i(1-\epsilon^\ast)^{m-i}.
\]
When $\epsilon^\ast < 1/2$, by the Chernoff bound, we have
\[
\sum_{m/2 \leq i \leq m} {m \choose i} (\epsilon^\ast)^i(1-\epsilon^\ast)^{m-i}\leq \exp\left\{- D(1/2 || \epsilon^*)m\right\},
\]
where $D(1/2 || \epsilon^*):=\frac12\log\frac{1}{2\epsilon^*}+\frac12\log\frac{1}{2(1-\epsilon^*)}$ is the relative entropy between two Bernoulli distributions with ``success'' probabilities being $1/2$ and $\epsilon^*$ respectively. It is easy to see that $D(1/2 || \epsilon^*)>0$. Therefore, the risk goes to 0 exponentially fast, in the order of $\exp\{- D(1/2 || \epsilon^*)m\} $ as $m \rightarrow \infty$.

\section{Proofs for Section~\ref{sec:radius}}\label{app:proof_sec_4}


\subsection{Proof of Lemma~\ref{lemma:profile_dual}}\label{sec:B.1}
\leavevmode

We first establish an optimality condition (Lemma~\ref{lem:lagrangian}) for the constraint 
$$
\pi^\circ \in \argmin_{\pi:\Omega\to[0,1]} \Phi(\pi;P_1,P_2).
$$
Without causing confusion, we simply write $
\pi^\circ \in \argmin_{\pi} \Phi(\pi;P_1,P_2)$ in subsequent proofs.

\begin{lemma}\label{lem:lagrangian}
Let $\pi^\circ$ be the oracle test. For any $P_1,P_2\in\scrP(\Omega)$, the constraint 
$$
\pi^\circ \in \argmin_{\pi:\Omega\to[0,1]} \Phi(\pi;P_1,P_2)
$$ 
holds if and only if 
\begin{equation}\label{eq:radius_constr}
\sup_{\bm\alpha_1,\bm\alpha_2 \in \mathcal B_+(\Omega)} \int_\Omega [\bm\alpha_2(\omega) \ind_{\Omega_2^\circ}(\omega) - \bm\alpha_1(\omega) \ind_{\Omega_1^\circ}(\omega)] (dP_1 - dP_2)(\omega) = 0.
\end{equation}
\end{lemma}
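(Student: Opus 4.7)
The plan is to show both sides of the equivalence reduce to the same pointwise comparison of $dP_1$ and $dP_2$ on the partition $\Omega = \Omega_1^\circ \cup \Omega_2^\circ$, namely $dP_1 \geq dP_2$ on $\Omega_1^\circ$ and $dP_1 \leq dP_2$ on $\Omega_2^\circ$ (in terms of Radon--Nikodym derivatives with respect to $P_1+P_2$).

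First, I would unpack the left-hand side using Lemma \ref{lemma:inf_phi} and the pointwise minimization argument in its proof (equation \eqref{eq:int_inf}). The risk $\Phi(\pi;P_1,P_2) = \int_\Omega [(1-\pi)\,dP_1 + \pi\,dP_2]$ is minimized pointwise: at each $\omega$, the minimizer is $\pi=1$ when $dP_1 > dP_2$, $\pi=0$ when $dP_1 < dP_2$, and any value in $[0,1]$ when $dP_1 = dP_2$. Since $\pi^\circ = \ind_{\Omega_1^\circ}$, the condition $\pi^\circ \in \argmin_\pi \Phi(\pi;P_1,P_2)$ holds iff $\pi^\circ$ is a pointwise minimizer $(P_1+P_2)$-almost everywhere, which is equivalent to
\[
  dP_1 \geq dP_2 \text{ on } \Omega_1^\circ, \qquad dP_1 \leq dP_2 \text{ on } \Omega_2^\circ.
\]

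Second, I would decouple the supremum in \eqref{eq:radius_constr}. Because $\bm\alpha_1$ appears only in the $\Omega_1^\circ$-integrand and $\bm\alpha_2$ only in the $\Omega_2^\circ$-integrand, the sup splits:
\[
  \sup_{\bm\alpha_2 \in \cB_+(\Omega)} \int_{\Omega_2^\circ} \bm\alpha_2 \, d(P_1-P_2) \;+\; \sup_{\bm\alpha_1 \in \cB_+(\Omega)} \int_{\Omega_1^\circ} \bm\alpha_1 \, d(P_2-P_1).
\]
Taking $\bm\alpha_1 = \bm\alpha_2 = 0$ shows each sup is $\geq 0$, so the joint sup equals $0$ iff each individual sup equals $0$.

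Third, I would apply a standard duality fact for signed measures: for a finite signed measure $\mu$ on a measurable set $A$,
$\sup_{\bm\alpha \in \cB_+(\Omega)} \int_A \bm\alpha \, d\mu = 0$ if and only if $\mu \leq 0$ on all Borel subsets of $A$ (equivalently $d\mu \leq 0$ $\mu^+$-a.e.\ on $A$). The ``if'' direction is immediate from $\bm\alpha \geq 0$; for the ``only if'' direction, if the Hahn decomposition of $\mu|_A$ had a positive part on a set $B$ with $\mu^+(B) > 0$, then $\bm\alpha = M \cdot \ind_B \in \cB_+(\Omega)$ yields $M \mu^+(B) \to \infty$, contradicting the supremum being $0$. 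Applied to $\mu = (P_1 - P_2)|_{\Omega_2^\circ}$ and $\mu = (P_2-P_1)|_{\Omega_1^\circ}$, this gives $dP_1 \leq dP_2$ on $\Omega_2^\circ$ and $dP_2 \leq dP_1$ on $\Omega_1^\circ$, matching the condition obtained in the first step.

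The proof is largely bookkeeping; the only mild subtlety is the Hahn-decomposition argument in the third step, which I expect to write compactly by noting that $\bm\alpha = M \ind_B$ is an admissible element of $\cB_+(\Omega)$ for any $M>0$ and Borel $B$, forcing any positive part of the signed measure on the relevant region to vanish.
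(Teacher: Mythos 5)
Your proof is correct, and it takes a genuinely different route from the paper's. The paper argues \emph{directly}: for necessity it perturbs $\pi^\circ$ to $\pi^\circ + \epsilon\bm\alpha$, verifies this is still a randomized test for small $\epsilon>0$, and reads off $\E_{P_1}[\bm\alpha]-\E_{P_2}[\bm\alpha]\le 0$ as a first-order optimality condition; for sufficiency it takes an arbitrary test $\pi$, writes $\pi-\pi^\circ$ in the form $\tilde{\bm\alpha}_2\ind_{\Omega_2^\circ}-\tilde{\bm\alpha}_1\ind_{\Omega_1^\circ}$ with $\tilde{\bm\alpha}_1=(1-\pi)\ind_{\Omega_1^\circ}$, $\tilde{\bm\alpha}_2=\pi\ind_{\Omega_2^\circ}$, and concludes $\Phi(\pi)\ge\Phi(\pi^\circ)$ from the sup condition. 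You instead pass both sides of the equivalence through a common normal form: the pointwise dominance condition $dP_1\ge dP_2$ on $\Omega_1^\circ$, $dP_1\le dP_2$ on $\Omega_2^\circ$ ($(P_1+P_2)$-a.e.), reached on the left via the pointwise-minimization step in the proof of Lemma~\ref{lemma:inf_phi} and on the right by splitting the supremum and invoking the Hahn decomposition with $\bm\alpha=M\ind_B$, $M\to\infty$. Your route makes the ``why'' of the equivalence more transparent and isolates the measure-theoretic content in a single standard fact; the paper's route stays closer to the Lagrangian/variational viewpoint, which is the one it actually needs downstream (the lemma is used precisely so that the constraint can be dualized via the Lagrangian $L$ in \eqref{eq:lag}). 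Both are sound, and your handling of the only-if direction via Hahn decomposition correctly accounts for the fact that each decoupled supremum is either $0$ or $+\infty$.
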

\begin{proof}
We first prove the necessity. Suppose $\pi^\circ \in \argmin_{\pi} \Phi(\pi;P_1,P_2)$. Then by definition for all randomized test $\pi$, we have
\[
  \Phi(\pi;P_1,P_2) \geq \Phi(\pi^\circ;P_1,P_2).
\] 
For any $\bm\alpha_1,\bm\alpha_2\in \mathcal B_+(\Omega)$, there exists a small enough $\epsilon>0$ such that the following perturbed $\pi^\circ$ is still a randomized test:
\[
\pi^\circ(\omega) + \epsilon[\bm\alpha_2(\omega) \ind_{\Omega_2^\circ}(\omega) - \bm\alpha_1(\omega) \ind_{\Omega_1^\circ}(\omega)] = 
\begin{cases}
1 - \epsilon\bm\alpha_1(\omega), & \omega \in \Omega_1^\circ, \\
\epsilon\bm\alpha_2(\omega), & \omega \in \Omega_2^\circ,
\end{cases}
\]
which means that the probability of accepting hypothesis $H_0$ is reduced on $\Omega_1^\circ$, and probability of accepting hypothesis $H_0$ is increased on $\Omega_2^\circ$. 
Recall $\bm\alpha = \bm\alpha_2 \ind_{\Omega_2^\circ} - \bm\alpha_1 \ind_{\Omega_1^\circ}$.
The optimality of $\pi^\circ$ implies that
\[
  \mathbb E_{P_1}[1-\pi^\circ(\omega) -\epsilon\bm\alpha(\omega)] + \mathbb E_{P_2}[\pi^\circ(\omega) + \epsilon\bm\alpha(\omega)] \geq \mathbb E_{P_1}[1-\pi^\circ(\omega) ] + \mathbb E_{P_2}[\pi^\circ(\omega)].
\] 
Dividing $\epsilon$ on both sides gives $\mathbb E_{P_1}[\bm\alpha(\omega)] - \mathbb E_{P_2}[\bm\alpha(\omega)] \leq 0$. 
Moreover, the equality in \eqref{eq:radius_constr} holds by taking $\bm \alpha_1 = \bm\alpha_2 \equiv 0$, which proves \eqref{eq:radius_constr}. 

Next, we prove the sufficiency. 
Suppose \eqref{eq:radius_constr} holds.
For any randomized test $\pi$, set $\tilde{\bm\alpha}:= \pi - \pi^\circ$. Pick $\tilde{\bm\alpha}_1,\tilde{\bm\alpha}_2 \in \mathcal{B}_+(\Omega)$ such that
\[
  \tilde{\bm\alpha}_1(\omega)= \begin{cases} 1-\pi(\omega) & \text{if $\omega \in \Omega_1^\circ$,} \\
  0 & \textrm{otherwise;} \end{cases}\quad
  \tilde{\bm\alpha}_2(\omega)= \begin{cases} \pi(\omega) & \text{if $\omega \in \Omega_2^\circ$,} \\
  0 & \textrm{otherwise.}\end{cases}
\]
Then by the definition of $\pi^\circ$, we have $\tilde{\bm\alpha}(\omega) = \tilde{\bm\alpha}_2(\omega) \ind_{\Omega_2^\circ}(\omega) - \tilde{\bm\alpha}_1(\omega) \ind_{\Omega_1^\circ}(\omega)$ for all $\omega\in\Omega$.
It follows that $\mathbb E_{P_1}[\tilde{\bm\alpha}(\omega)] - \mathbb E_{P_2}[\tilde{\bm\alpha}(\omega)] \leq 0$, and consequently, 
\[
\begin{aligned}
  &\mathbb E_{P_1}[1-\pi(\omega)] + \mathbb E_{P_2}[\pi(\omega) ] \\
  = & \mathbb E_{P_1}[1-\pi^\circ(\omega) -\tilde{\bm\alpha}(\omega)] + \mathbb E_{P_2}[\pi^\circ(\omega) + \tilde{\bm\alpha}(\omega)] \\
  = & \mathbb E_{P_1}[1-\pi^\circ(\omega) ] + \mathbb E_{P_2}[\pi^\circ(\omega)] -  (\mathbb E_{P_1}[\tilde{\bm\alpha}(\omega)] - \mathbb E_{P_2}[\tilde{\bm\alpha}(\omega)] )\\
  \geq & \mathbb E_{P_1}[1-\pi^\circ(\omega) ] + \mathbb E_{P_2}[\pi^\circ(\omega)].
\end{aligned}
\]
This indicates that the risk of any test $\pi$ is greater than or equal to the risk of $\pi^\circ$, implying $\pi^\circ \in \argmin_{\pi} \Phi(\pi;P_1,P_2)$. Therefore we have completed the proof. 
\end{proof}

Let us proceed by defining the Lagrangian function
\begin{equation}\label{eq:lag}
\begin{aligned}
& L(P_1,P_2;\lambda_1,\lambda_2,\bm\alpha_1,\bm\alpha_2) \\
:=& \sum_{k=1}^2 \lambda_k\wass(P_k,Q_{k,n_k}) + \sum_{k=1}^2\sum_{j\neq k}  \left\{ \mathbb E_{P_k}[\bm\alpha_j(\omega) \ind_{\Omega_j^\circ}(\omega) - \bm\alpha_k(\omega) \ind_{\Omega_k^\circ}(\omega)] \right\},
\end{aligned}
\end{equation} 
where the second term is equivalent to $\int_\Omega [\bm\alpha_2(\omega) \ind_{\Omega_2^\circ}(\omega) - \bm\alpha_1(\omega) \ind_{\Omega_1^\circ}(\omega)] (dP_1 - dP_2)(\omega)$.
Using Lemma \ref{lem:lagrangian}, if $\pi^\circ \notin \argmin_{\pi} \Phi(\pi;P_1,P_2)$, then 
there exists functions $\bm\alpha_1',\bm\alpha_2' \in \mathcal B_+(\Omega)$ such that $\sum_{k=1}^2\sum_{j\neq k}\mathbb E_{P_k}[\bm\alpha_j'(\omega)  \ind_{\Omega_j^\circ}(\omega)  - \bm\alpha_k'(\omega)  \ind_{\Omega_k^\circ}(\omega) ] >0$, whence 
\[  
\sup_{\substack{ \bm\alpha_1,\bm\alpha_2 \in \mathcal B_+(\Omega)}} L(P_1,P_2;\lambda_1,\lambda_2,\bm\alpha_1,\bm\alpha_2)  \geq \lim_{t\to\infty} L(P_1,P_2;\lambda_1,\lambda_2,t\bm\alpha_1',t\bm\alpha_2') = +\infty.
\]
Therefore, we arrive at an equivalent formulation for the profile function $\profile_{n_1,n_2}$ defined in \eqref{eq:profile}:
\begin{equation}\label{eq:profile_lag}
\profile_{n_1,n_2}  = \inf_{\substack{P_1,P_2\in \scrP(\Omega) }} \sup_{\substack{\lambda_1,\lambda_2\geq 0 \\ \lambda_1+\lambda_2 \leq 1\\ {\bm\alpha_1,\bm\alpha_2 \in \mathcal B_+(\Omega)}}} L(P_1,P_2;\lambda_1,\lambda_2,\bm\alpha_1,\bm\alpha_2).
\end{equation}

In what follows, we prove the strong duality (i.e.~exchanging of $\sup$ and $\inf$) in five steps. 
We start by showing the weak duality and simplify the dual formulation of $\profile_{n_1,n_2}$. 
Next, we show that it suffices to restrict the feasible region of $\bm\alpha_1,\bm\alpha_2$ from $\cB_+(\Omega)$ to $\cB_+(\Omega)\cap\Lip(\Omega)$, which eventually leads to the set $\cA$ defined in \eqref{eq:setA}, and prove the strong duality by assuming the support $\Omega$ is compact.
Finally, we relax the compactness assumption.

\paragraph{\textbf{Step 1}} Weak duality.

Exchanging $\inf$ and $\sup$ in Equation \eqref{eq:profile_lag} yields
\begin{equation}\label{eq:weak_duality}
\begin{aligned}
\profile_{n_1,n_2}& \geq \sup_{\substack{\lambda_1,\lambda_2\geq 0 \\ \lambda_1+\lambda_2 \leq 1\\ \bm\alpha_1,\bm\alpha_2 \in \mathcal B_+(\Omega)}} \inf_{P_1,P_2\in\scrP(\Omega)}L(P_1,P_2;\lambda_1,\lambda_2,\bm\alpha_1,\bm\alpha_2).
\end{aligned}
\end{equation}
Let us simplify the right-hand side by deriving a closed-form solution to the inner $\inf$ problem. Recall that $\Gamma(P,Q)$ denotes the collection of all Borel probability measures on $\Omega\times\Omega$ with marginal distributions $P$ and $Q$. By the definition of Wasserstein metric, since the empirical distribution $Q_{k,n_k}$ is supported on a finite set $\hOmega_k=\{\homega_k^1,\ldots,\homega_k^{n_k}\}$ for $k=1,2$, we have
\[
\begin{aligned}
\lambda_k\wass(P_k,Q_{k,n_k}) =  
\inf_{\gamma_k \in \Gamma(P_k,Q_{k,n_k})  } \left\{ \sum_{i=1}^{n_k} \int_\Omega \lambda_kc(\omega,\homega_k^i) d\gamma_k(\omega, \homega_k^i)  \right\}.
\end{aligned}
\]
Moreover, for any distribution $\gamma_k \in \Gamma(P_k,Q_{k,n_k}) $, $k=1,2$, we have 
\[
\begin{multlined}
\sum_{j\neq k}  \left\{ \mathbb E_{P_k}[\bm\alpha_j(\omega)  \ind_{\Omega_j^\circ}(\omega)-\bm\alpha_k(\omega)  \ind_{\Omega_k^\circ}(\omega)] \right\}  \\
= \sum_{i=1}^{n_k} \int_\Omega \sum_{j\neq k}  [\bm\alpha_j(\omega) \ind_{\Omega_j^\circ}(\omega) - \bm\alpha_k(\omega) \ind_{\Omega_k^\circ}(\omega)] d\gamma_k(\omega, \homega_k^i) .
\end{multlined}
\]
Substituting the above equations to \eqref{eq:lag}, it follows that:
\[
\begin{aligned}
& L(P_1,P_2;\lambda_1,\lambda_2,\bm\alpha_1,\bm\alpha_2) \\
= & \sum_{k=1}^2 \inf_{\gamma_k \in \Gamma(P_k,Q_{k,n_k})  } \Big\{  \sum_{i=1}^{n_k}\int_\Omega \big[\lambda_kc(\omega,\homega_k^i)  \\
& \hspace{100pt} + \sum_{j\neq k}  (\bm\alpha_j(\omega)\ind_{\Omega_j^\circ}(\omega) - \bm\alpha_k(\omega)\ind_{\Omega_k^\circ}(\omega))   \big]d\gamma_k(\omega, \homega_k^i)  \Big\}.
\end{aligned}
\]
Thereby for fixed $\lambda_1$, $\lambda_2$, $\bm\alpha_1$, $\bm\alpha_2$, $\inf_{P_1,P_2} L(P_1,P_2;\lambda_1,\lambda_2,\bm\alpha_1,\bm\alpha_2)$ can be expressed equivalently as a minimization problem over $\gamma_k$, whose first marginal distribution can be arbitrary and second marginal is the empirical distribution $Q_{k,n_k}$, $k=1,2$: 
\[
\begin{aligned}
& \inf_{P_1,P_2} L(P_1,P_2;\lambda_1,\lambda_2,\bm\alpha_1,\bm\alpha_2) \\
= &  \sum_{k=1}^2  \inf_{\gamma_k \in \Gamma(\cdot,Q_{k,n_k}) } \Big\{ \sum_{i=1}^{n_k}   \int_\Omega \big[\lambda_kc(\omega,\homega_k^i)  \\
& \hspace{100pt} + \sum_{j\neq k}  (\bm\alpha_j(\omega)\ind_{\Omega_j^\circ}(\omega) - \bm\alpha_k(\omega)\ind_{\Omega_k^\circ}(\omega))   \big]d\gamma_k(\omega, \homega_k^i)  \Big\} \\
= & \sum_{k=1}^2 \frac{1}{n_k}  \sum_{i=1}^{n_k} \inf_{\omega\in\Omega} \big\{\lambda_k c( \omega,\homega_k^i) +\sum_{j\neq k}  (\bm\alpha_j(\omega)\ind_{\Omega_j^\circ}(\omega) - \bm\alpha_k(\omega)\ind_{\Omega_k^\circ}(\omega)) \big\},
\end{aligned}
\]
where $\Gamma(\cdot,Q_{k,n_k})$ denotes the collection of all Borel probability measures on $\Omega \times \Omega$ with second marginal being $Q_{k,n_k}$, and the last equality is attained by picking
\[
\gamma_k (\omega_k^i,\homega_k^i) = \frac{1}{n_k},\quad i=1,\ldots,n_k,\ k=1,2,
\]
where
\[
\omega_k^i \in \arg\min\limits_{\omega\in\Omega} \Big\{ \lambda_k c( \omega,\homega_k^i) + \sum\limits_{j\neq k}  (\bm\alpha_j(\omega)\ind_{\Omega_j^\circ}(\omega) - \bm\alpha_k(\omega)\ind_{\Omega_k^\circ}(\omega))  \Big\}.
\]
If the minimizer does not exist, we can argue similarly using a sequence of approximate minimizers. If there are multiple minimizers, we can simply choose one of them or distribute the probability mass $1/n_k$ uniformly on the optimal solution set. Therefore, we have the right-hand side of \eqref{eq:weak_duality} equals to 
\begin{equation}\label{eq:radius_dual}
\sup_{\substack{\lambda_1,\lambda_2\geq 0 \\ \lambda_1+\lambda_2 \leq 1\\ \bm\alpha_1,\bm\alpha_2 \in \mathcal B_+(\Omega)}} \!\!\! \sum_{k=1}^2 \E_{\homega_k\sim Q_{k,n_k}} \Big[\inf_{\omega\in\Omega} \Big\{ \lambda_k c( \omega,\homega_k) +\sum\limits_{j\neq k}[\bm\alpha_j(\omega)\ind_{\Omega_j^\circ}(\omega) - \bm\alpha_k(\omega)\ind_{\Omega_k^\circ}(\omega)] \Big\} \Big].
\end{equation}
In the sequel, we will refer to the right-hand side of \eqref{eq:radius_dual} as the dual problem.
 
\paragraph{\textbf{Step 2}} Restricting on the subset $\cA$ as defined in \eqref{eq:setA}.

We first prove that we can restrict $\bm\alpha_1$ and $\bm\alpha_2$ on the space of Lipschitz continuous functions without affecting the optimal value. 

For any feasible solution $(\lambda_1,\lambda_2, \bm\alpha_1,\bm\alpha_2)$ of the dual problem in \eqref{eq:radius_dual} such that the dual objective is finite, let us construct a modification $(\lambda_1,\lambda_2, \tilde{\bm\alpha}_1,\tilde{\bm\alpha}_2)$ which yields an objective value no worse than $(\lambda_1,\lambda_2, \bm\alpha_1,\bm\alpha_2)$, but enjoys a nicer continuity property. 
For $i=1,2,\ldots,n_1$, set
\[
\begin{aligned}
\phi_1(\homega_1^i) : &= \inf_{\omega\in\Omega}\{ \lambda_1 c( \omega,\homega_1^i) + \bm\alpha_2(\omega)\ind_{\Omega_2^\circ}(\omega) - \bm\alpha_1(\omega)\ind_{\Omega_1^\circ}(\omega)\}\\
& = \min \Big\{ \inf_{\omega\in \Omega_1^\circ} \{ \lambda_1 c( \omega,\homega_1^i) -\bm\alpha_1(\omega) \}, \inf_{\omega\in \Omega_2^\circ}\{ \lambda_1c( \omega,\homega_1^i) + \bm\alpha_2(\omega) \}  \Big\}.
\end{aligned}
\]
It follows that 
\begin{equation}\label{eq:alpha1_constr}
 \bm\alpha_1(\omega) \leq \lambda_1 c( \omega,\homega_1^i) -  \phi_1(\homega_1^i), \quad \forall \omega\in \Omega_1^\circ, \ \forall i=1,\ldots,n_1. 
\end{equation}
Define another function $\tilde{\bm\alpha}_1$ as
\begin{equation}\label{eq:ctransform1}
\tilde{\bm\alpha}_1(\omega) = \min_{i=1,\ldots,n_1}\left\{ \lambda_1 c( \omega,\homega_1^i) -  \phi_1(\homega_1^i) \right\}, \quad \forall \omega\in \Omega_1^\circ.
\end{equation}
This yields $\bm\alpha_1(\omega) \leq \tilde{\bm\alpha}_1(\omega)$ for all $\omega\in \Omega_1^\circ$, due to \eqref{eq:alpha1_constr}. Moreover, the objective value in \eqref{eq:radius_dual} associated with $(\lambda_1,\lambda_2,\tilde{\bm\alpha}_1,\bm\alpha_2)$ is no less than the value associated with $(\lambda_1,\lambda_2,\bm\alpha_1,\bm\alpha_2)$ since
\[
\begin{aligned}
\phi_1(\homega_1^i) & \leq \lambda_1 c( \omega,\homega_1^i) - \tilde{\bm\alpha}_1(\omega), \quad \forall \omega\in\Omega_1^\circ, \ \forall i=1,\ldots,n_1, \\
\lambda_2 c( \omega,\homega_2^j) + \bm\alpha_1(\omega) &\leq \lambda_2 c( \omega,\homega_2^j) + \tilde{\bm\alpha}_1(\omega) ,\quad \forall \omega\in\Omega_1^\circ, \ \forall j=i,\ldots,n_2. 
\end{aligned}
\]
Furthermore, the function $\tilde{\bm\alpha}_1$ defined in this way is Lipschitz with constant $\lambda_1$. Indeed, for any two points $\xi,\eta \in \Omega_1^\circ$, let $i_1$ and $i_2$ be the indices at which the minimum are attained in the definition \eqref{eq:ctransform1} for $\xi$ and $\eta$, respectively. We have
\[
\begin{aligned}
\tilde{\bm\alpha}_1(\xi) - \tilde{\bm\alpha}_1(\eta) & =[ \lambda_1 c( \xi,\homega_1^{i_1}) -  \phi_1(\homega_1^{i_1})] - [\lambda_1 c( \eta,\homega_1^{i_2}) -  \phi_1(\homega_1^{i_2})]
\\
& \leq [ \lambda_1 c( \xi,\homega_1^{i_2}) -  \phi_1(\homega_1^{i_2})] - [\lambda_1 c( \eta,\homega_1^{i_2}) -  \phi_1(\homega_1^{i_2})] \\
& = \lambda_1 [ c( \xi,\homega_1^{i_2}) - c( \eta,\homega_1^{i_2})]\\
& \leq  \lambda_1 c( \xi,\eta),
\end{aligned}
\]
where the last inequality is due to the triangle inequality of the metric $c(\cdot,\cdot)$; and the same inequality holds for $\tilde{\bm\alpha}_1(\eta) - \tilde{\bm\alpha}_1(\xi)$. 
In a similar fashion, for $j=1,2,\ldots,n_2$, define
\[
\begin{aligned}
\phi_2(\homega_2^j) : &= \inf_{\omega}\{ \lambda_2 c( \omega,\homega_2^j) +\tilde{\bm\alpha}_1(\omega)\ind_{\Omega_1^\circ}(\omega) - \bm\alpha_2(\omega)\ind_{\Omega_2^\circ}(\omega)\}\\
& = \min \Big\{ \inf_{\omega\in \Omega_1^\circ} \{ \lambda_2 c( \omega,\homega_2^j)  + \tilde{\bm\alpha}_1(\omega) \}, \inf_{\omega\in \Omega_2^\circ}\{ \lambda_2 c( \omega,\homega_2^j)  - \bm\alpha_2(\omega) \}  \Big\},
\end{aligned}
\]
and set 
\begin{equation}\label{eq:ctransform2}
\tilde{\bm\alpha}_2(\omega) := \min_{j=1,\ldots,n_2}\left\{ \lambda_2 c( \omega,\homega_2^j) - \phi_2(\homega_2^j) \right\}, \quad \forall \omega\in \Omega_2^\circ.
\end{equation}
Then $\bm\alpha_2(\omega) \leq \tilde{\bm\alpha}_2(\omega)$ for all $\omega\in \Omega_2^\circ$ and the objective value associated with $(\lambda_1,\lambda_2,\tilde{\bm\alpha}_1,\tilde{\bm\alpha}_2)$ is no less than the objective value associated with $(\lambda_1,\lambda_2,\tilde{\bm\alpha}_1,\bm\alpha_2)$; and $\tilde{\bm\alpha}_2$ is Lipschitz with constant $\lambda_2$. 
Since we are in the region $\{\lambda_1,\lambda_2\geq 0, \lambda_1+\lambda_2\leq 1\}$, the argument above proves that without loss of generality we can restrict $\bm\alpha_1,\bm\alpha_2$ on the set of $1$-Lipschitz continuous functions.

Observe that the objective value does not change if we shift $\bm\alpha_k$ by any constant $C_k$, $k=1,2$. Hence, without loss of generality, we can only consider those satisfying $\bm\alpha_k(\omega_k^\circ)=0$ without affecting the optimal value, where $\omega_k^\circ\in\Omega_k^\circ$, $k=1,2$.
By the above argument, we have shown that it suffices to restrict the feasible region on $\cA$.

\paragraph{\textbf{Step 3}} Strong duality for compact space. 
  
Now assume $\Omega$ is compact. We aim to prove the strong duality by applying Sion's minimax theorem to the Lagrangian $L(P_1,P_2;\lambda_1,\lambda_2,\bm\alpha_1,\bm\alpha_2)$ defined in \eqref{eq:lag}.
Observe that $L(P_1,P_2;\lambda_1,\lambda_2,\bm\alpha_1,\bm\alpha_2)$ is convex in $P_k$, linear in $\lambda_k$ and $\bm\alpha_k$; by Prokhorov's theorem \cite{prokhorov1956convergence}, the convex space $\calP(\Omega)\times \calP(\Omega)$ is compact since $\Omega$ is relatively compact with respect to the weak topology; the space $\{\lambda_1,\lambda_2\geq 0, \lambda_1+\lambda_2\leq 1\}$ is also a convex compact space. 
The feasible region of $\bm\alpha_k,k=1,2$ belongs to a linear topological space under the sup-norm. This justifies the conditions for Sion's minimax theorem, thereby we can exchange sup and inf in \eqref{eq:lag} when $\Omega$ is compact.

\paragraph{\textbf{Step 4}} Relaxing the compactness assumption when the cost is bounded.
   
We now relax the compactness assumption made in the previous step, using a technique similar to the proof of Theorem 1.3 in \cite{villani2003topics}. 
We temporarily assume the cost function $c(\cdot,\cdot)$ is bounded by a positive constant $C$ and is uniformly continuous. We will relax the bounded assumption later.
We already have the weak duality:
\[
\begin{aligned} 
& v_1 :=\inf_{\substack{P_1,P_2\in \scrP(\Omega) }} \sup_{\substack{\lambda_1,\lambda_2\geq 0 \\ \lambda_1+\lambda_2 \leq 1\\ \bm\alpha_1,\bm\alpha_2 \in \mathcal B_+(\Omega)}} L(P_1,P_2;\lambda_1,\lambda_2,\bm\alpha_1,\bm\alpha_2)  \\ 
&\geq \sup_{\substack{\lambda_1,\lambda_2\geq 0 \\ \lambda_1+\lambda_2 \leq 1\\ \bm\alpha_1,\bm\alpha_2 \in \mathcal B_+(\Omega)}}  \!\!\!\!\sum_{k=1}^2 \E_{\homega_k\sim Q_{k,n_k}}  \Big[\inf_{\omega \in \Omega} \Big\{ \lambda_k c( \omega,\homega_k) +\sum\limits_{j\neq k}  [\bm\alpha_j(\omega)\ind_{\Omega_j^\circ}(\omega) - \bm\alpha_k(\omega)\ind_{\Omega_k^\circ}(\omega)] \Big\} \Big]\\
&=: v_2.
\end{aligned}
\]
In the following we show that $v_1 \leq v_2$. 

For any $\epsilon>0$, let $\Omega^\epsilon \subset \Omega$ be a compact subset sufficiently large, such that $P_k^\circ(\Omega \setminus \Omega^{\epsilon}) \leq \epsilon$ and $Q_{k,n_k}(\Omega^{\epsilon})=1$, $k=1,2$. This is always possible since $Q_{k,n_k}$ is the empirical distribution and with finite support.
Then the previous steps imply that the strong duality holds on $\Omega^\epsilon$:
\[
\begin{aligned} 
&v_1^\epsilon := \inf_{\substack{P_1,P_2\in \scrP(\Omega^\epsilon) }} \sup_{\substack{\lambda_1,\lambda_2\geq 0 \\ \lambda_1+\lambda_2 \leq 1\\ \bm\alpha_1,\bm\alpha_2 \in \mathcal B_+(\Omega^\epsilon)}} L(P_1,P_2;\lambda_1,\lambda_2,\bm\alpha_1,\bm\alpha_2)  \\ 
&= \sup_{\substack{\lambda_1,\lambda_2\geq 0 \\ \lambda_1+\lambda_2 \leq 1\\ \bm\alpha_1,\bm\alpha_2 \in \mathcal B_+(\Omega^\epsilon)}}  \!\!\!\!\sum_{k=1}^2 \E_{\homega_k\sim Q_{k,n_k}} \Big[\inf_{\omega \in \Omega^\epsilon} \Big\{ \lambda_k c( \omega,\homega_k) +\sum\limits_{j\neq k}  [\bm\alpha_j(\omega)\ind_{\Omega_j^\circ}(\omega) - \bm\alpha_k(\omega)\ind_{\Omega_k^\circ}(\omega)] \Big\} \Big]\\
&=: v_2^\epsilon.
\end{aligned}
\]

Consider the $\inf\sup$ problem defining $v_1$.
For the optimal solution $(P_1^\epsilon,P_2^\epsilon)$ to the $\inf\sup$ problem that induces $v_1^\epsilon$, we define distributions $\tilde{P}_1,\tilde{P}_2$ via
\[
\tilde P_k(A) =  P_k^\circ(\Omega^{\epsilon}) \cdot P_k^\epsilon(A\cap\Omega^\epsilon) + P_k^\circ(A\cap(\Omega \setminus \Omega^{\epsilon})),\quad  \forall \textnormal{ Borel set }A\subset\Omega.
\]
Recall $\bm\alpha = \bm\alpha_2 \ind_{\Omega_2^\circ} - \bm\alpha_1 \ind_{\Omega_1^\circ}$. 
We compare the Lagrangian function $L$ defined in \eqref{eq:lag} associated with $(\tilde P_1,\tilde P_2)$ and $(P_1^\epsilon,P_2^\epsilon)$. For the first term in \eqref{eq:lag}, we have that
\[
\wass(\tilde P_k,Q_{k,n_k}) \leq P_k^\circ(\Omega^{\epsilon}) \wass(P_k^\epsilon,Q_{k,n_k}) + CP_k^\circ(\Omega \setminus \Omega^{\epsilon}) \leq \wass(P_k^\epsilon,Q_{k,n_k}) + C\epsilon.
\] 
For the second term in \eqref{eq:lag}, we have
\[
\begin{aligned}
&\int_\Omega \bm\alpha(\omega)\, (\tilde{P}_1 - \tilde{P}_2)(d\omega) \\
= & \int_{\Omega^\epsilon} \bm\alpha(\omega)\, (P_1^\circ(\Omega^\epsilon) P_1^\epsilon - P_2^\circ(\Omega^\epsilon) P_2^\epsilon)(d\omega) + \int_{\Omega \setminus \Omega^\epsilon} \bm\alpha(\omega)\, (P_1^\circ - P_2^\circ)(d\omega). 
\end{aligned}
\]
By definition of $\Omega_1^\circ, \Omega_2^\circ$, we have $\int_{\Omega \setminus \Omega^\epsilon} \bm\alpha(\omega)(P_1^\circ - P_2^\circ)(d\omega) \leq 0$.
Moreover,
\[
\begin{aligned}
& \int_{\Omega^\epsilon} \bm\alpha(\omega)\, (P_1^\circ(\Omega^\epsilon) P_1^\epsilon - P_2^\circ(\Omega^\epsilon) P_2^\epsilon)(d\omega) \\
= & \begin{cases} P_1^\circ(\Omega^\epsilon) \int_{\Omega^\epsilon} \bm\alpha(\omega) ( P_1^\epsilon - P_2^\epsilon)(d\omega) - (P_2^\circ(\Omega^\epsilon) - P_1^\circ(\Omega^\epsilon)) \int_{\Omega^\epsilon} \bm\alpha(\omega) P_2^\epsilon(d\omega), &\\  \hspace{240pt} \text{if }P_1^\circ(\Omega^\epsilon) \leq P_2^\circ(\Omega^\epsilon); \\
P_2^\circ(\Omega^\epsilon) \int_{\Omega^\epsilon} \bm\alpha(\omega) ( P_1^\epsilon - P_2^\epsilon)(d\omega) + (P_1^\circ(\Omega^\epsilon) - P_2^\circ(\Omega^\epsilon)) \int_{\Omega^\epsilon} \bm\alpha(\omega) P_1^\epsilon(d\omega), & \\ \hspace{240pt} \text{if } P_1^\circ(\Omega^\epsilon) > P_2^\circ(\Omega^\epsilon).
\end{cases}
\end{aligned}
\]
By definition $\int_{\Omega^\epsilon} \bm\alpha(\omega) ( P_1^\epsilon - P_2^\epsilon)(d\omega)\leq 0$ and $ P_k^\circ(\Omega^\epsilon)\geq 1-\epsilon$, thereby
\[
P_k^\circ(\Omega^\epsilon) \int_{\Omega^\epsilon} \bm\alpha(\omega) ( P_1^\epsilon - P_2^\epsilon)(d\omega)\leq (1-\epsilon) \int_{\Omega^\epsilon} \bm\alpha(\omega) ( dP_1^\epsilon - dP_2^\epsilon)(\omega) \leq 0.
\]
Moreover, since $P_k^\circ(\Omega^\epsilon)\geq 1-\epsilon$, we have $|P_1^\circ(\Omega^\epsilon) - P_2^\circ(\Omega^\epsilon)| \leq \epsilon$,  consequently we have
\[
|P_1^\circ(\Omega^\epsilon) - P_2^\circ(\Omega^\epsilon)| \int_{\Omega^\epsilon} \bm\alpha(\omega) dP_k^\epsilon(\omega) \leq \epsilon \int_\Omega c(\omega,\omega_k^0) dP_k^\epsilon(\omega) \leq C\epsilon,
\]
where the last inequality is due to the $1$-Lipschitz property of $\bm\alpha_k$ and $C$ may be a different constant. 
Combining with previous inequality that $\wass(\tilde P_k,Q_{k,n_k}) \leq \wass(P_k^\epsilon,Q_{k,n_k}) + C\epsilon$, $k =1,2$, we have 
\[
v_1 \leq v_1^\epsilon + 2C\epsilon .
\]

Now consider the dual problem defining $v_2$. Let $(\bm\alpha_1^\epsilon,\bm\alpha_2^\epsilon)$ be the optimal solution to the dual problem supported on the subset $\Omega^{\epsilon}$. 
We will construct an approximate maximizer $(\tilde{\bm\alpha}_1,\tilde{\bm\alpha}_2)$ of the original dual problem
from $(\bm\alpha_1^\epsilon,\bm\alpha_2^\epsilon)$. 
To this end, let us define
\[
\begin{aligned}
\phi_1^\epsilon(\homega_1^i) :&= \min \Big\{ \inf_{\omega\in \Omega_1^\circ\cap\Omega^{\epsilon}} \{ \lambda_1 c( \omega,\homega_1^i)  - \bm\alpha_1^\epsilon(\omega) \}, \inf_{\omega\in \Omega_2^\circ\cap\Omega^{\epsilon}}\{ \lambda_1 c( \omega,\homega_1^i)  + \bm\alpha_2^\epsilon(\omega) \}  \Big\},\\
\phi_2^\epsilon(\homega_2^j) : &= \min \Big\{ \inf_{\omega\in \Omega_1^\circ\cap\Omega^{\epsilon}} \{ \lambda_2 c( \omega,\homega_2^j)  + \bm\alpha_1^\epsilon(\omega) \}, \inf_{\omega\in \Omega_2^\circ\cap\Omega^{\epsilon}}\{ \lambda_2 c( \omega,\homega_2^j)  - \bm\alpha_2^\epsilon(\omega) \}  \Big\}.
\end{aligned}
\]
From the above equations we have that $\bm\alpha_1^\epsilon,\bm\alpha_2^\epsilon$ satisfy:
\begin{equation}\label{eq:alpha_eps_cond}
\begin{aligned}
\bm\alpha_1^\epsilon(\omega) & \leq \lambda_1 c( \omega,\homega_1^i) - \phi_1^\epsilon(\homega_1^i),\quad \forall \omega \in \Omega^\epsilon, \ i =1,\ldots,n_1, \\
\bm\alpha_2^\epsilon(\omega) & \leq \lambda_2 c( \omega,\homega_2^j) - \phi_2^\epsilon(\homega_2^j),\quad \forall \omega \in \Omega^\epsilon, \ j =1,\ldots,n_2.
\end{aligned}    
\end{equation}
Define $\tilde{\bm\alpha}_1,\tilde{\bm\alpha}_2 $ as
\begin{equation}\label{eq:tilde_alpha_cond}
\begin{aligned}
\tilde{\bm\alpha}_1(\omega) &= \min_{1\leq i \leq n_1} \{ \lambda_1 c(\omega,\homega_1^i) - \phi_1^\epsilon(\homega_1^i)\}, \quad \forall \omega \in \Omega, \\
\tilde{\bm\alpha}_2(\omega) &= \min_{1\leq j \leq n_2} \{ \lambda_2 c(\omega,\homega_2^j) - \phi_2^\epsilon(\homega_2^j)\}, \quad \forall \omega \in \Omega.
\end{aligned}
\end{equation}
This implies that $\phi_k^\epsilon(\homega_k^i) \leq \inf_{\omega\in \Omega_k^\circ} \{ \lambda_k c( \omega,\homega_k^i)  - \tilde{\bm\alpha}_k(\omega) \}$, $k=1,2$, $i=1,\ldots,n_k$. 
Comparing \eqref{eq:tilde_alpha_cond} and \eqref{eq:alpha_eps_cond}, we have that $\tilde{\bm\alpha}_k(\omega) \geq \bm\alpha_k^\epsilon(\omega)$, $k=1,2$, for $\omega\in\Omega^\epsilon$. Consequently, we have  
\[
\begin{aligned}
\phi_1^\epsilon(\homega_1^i) &\leq \min \Big\{ \inf_{\omega\in \Omega_1^\circ\cap\Omega^{\epsilon}} \{ \lambda_1 c( \omega,\homega_1^i)  - \tilde{\bm\alpha}_1(\omega) \}, \inf_{\omega\in \Omega_2^\circ\cap\Omega^{\epsilon}}\{ \lambda_1 c( \omega,\homega_1^i)  + \tilde{\bm\alpha}_2(\omega) \}  \Big\},\\
\phi_2^\epsilon(\homega_2^j) &\leq \inf \Big\{ \min_{\omega\in \Omega_1^\circ\cap\Omega^{\epsilon}} \{ \lambda_2 c( \omega,\homega_2^j)  + \tilde{\bm\alpha}_1(\omega) \}, \inf_{\omega\in \Omega_2^\circ\cap\Omega^{\epsilon}}\{ \lambda_2 c( \omega,\homega_2^j)  - \tilde{\bm\alpha}_2(\omega) \}  \Big\}.
\end{aligned}
\]
Moreover, we can choose $\Omega^\epsilon$ sufficiently large so that for every $\omega\in \Omega_2^\circ \cap (\Omega\setminus \Omega^\epsilon)$,
\[
\begin{aligned}
\lambda_1 c( \omega,\homega_1^i) + \tilde{\bm\alpha}_2(\omega) & = \lambda_1 c( \omega,\homega_1^i) + \lambda_2 c(\omega,\homega_2^j) - \phi_2^\epsilon(\homega_2^j)\\
& \geq \inf_{\omega\in \Omega_2^\circ\cap\Omega^{\epsilon}}\{ \lambda_1 c( \omega,\homega_1^i)  + \bm\alpha_2^\epsilon(\omega) \}
\geq \phi_1^\epsilon(\homega_1^i),
\end{aligned}
\]
where $j$ is the minimizer in the definition \eqref{eq:tilde_alpha_cond}. 
Combining these together, we have 
\[
\begin{aligned}
\phi_1^\epsilon(\homega_1^i) &\leq \min \Big\{ \inf_{\omega\in \Omega_1^\circ} \{ \lambda_1 c( \omega,\homega_1^i)  - \tilde{\bm\alpha}_1(\omega) \}, \inf_{\omega\in \Omega_2^\circ}\{ \lambda_1 c( \omega,\homega_1^i)  + \tilde{\bm\alpha}_2(\omega) \}  \Big\},\\
\phi_2^\epsilon(\homega_2^j) &\leq \min \Big\{ \inf_{\omega\in \Omega_1^\circ} \{ \lambda_2 c( \omega,\homega_2^j)  + \tilde{\bm\alpha}_1(\omega) \}, \inf_{\omega\in \Omega_2^\circ}\{ \lambda_2 c( \omega,\homega_2^j)  - \tilde{\bm\alpha}_2(\omega) \}  \Big\}.
\end{aligned}
\]
Therefore, from $\tilde{\bm\alpha}_1,\tilde{\bm\alpha}_2$  defined in \eqref{eq:tilde_alpha_cond}, we see $v_2 \geq v_2^\epsilon$. Combine with previous argument, we have
\[
v_2^\epsilon \leq v_2 \leq v_1 \leq v_1^\epsilon + 2C\epsilon.
\]
By letting $\epsilon\rightarrow 0$, we have shown the strong duality, provided that the cost function is bounded.

\paragraph{\textbf{Step 5}} Relaxing the bounded cost assumption.

Next, we turn to the general case with cost function by writing $c:=\sup_m c_m$, where $c_m(x,y) = \min\{c(x,y),m\}$ is the truncated cost function that are bounded for each $m\in \mathbb N$. Let $v_1^m$ be the optimal value of the primal problem under cost $c_m$, and $v_2^m$ denote the optimal value of the dual problem under cost $c_m$. More specifically, let 
\[\label{eq:lag_m}
\begin{aligned}
& L^m(P_1,P_2;\lambda_1,\lambda_2,\bm\alpha_1,\bm\alpha_2) \\
:=& \sum_{k=1}^2 \lambda_k\wass^m(P_k,Q_{k,n_k}) + \sum_{k=1}^2\sum_{j\neq k}  \left\{ \mathbb E_{P_k}[\bm\alpha_j(\omega) \ind_{\Omega_j^\circ}(\omega) - \bm\alpha_k(\omega) \ind_{\Omega_k^\circ}(\omega)] \right\},
\end{aligned}
\]
where $\wass^m(P_k,Q_{k,n_k})$ is the Wasserstein distance associated with cost function $c_m(\cdot,\cdot)$.
Define
\[
\begin{aligned} 
&v_1^m := \inf_{\substack{P_1,P_2\in \scrP(\Omega) }} \sup_{\substack{\lambda_1,\lambda_2\geq 0 \\ \lambda_1+\lambda_2 \leq 1\\ \bm\alpha_1,\bm\alpha_2 \in \mathcal B_+(\Omega)}} L^m(P_1,P_2;\lambda_1,\lambda_2,\bm\alpha_1,\bm\alpha_2)  \\ 
&= \sup_{\substack{\lambda_1,\lambda_2\geq 0 \\ \lambda_1+\lambda_2 \leq 1\\ \bm\alpha_1,\bm\alpha_2 \in \mathcal B_+(\Omega)}}  \!\!\!\!\sum_{k=1}^2 \E_{\homega_k\sim Q_{k,n_k}} \Big[\inf_{\omega \in \Omega} \Big\{ \lambda_k c_m( \omega,\homega_k) +\sum\limits_{j\neq k}  [\bm\alpha_j(\omega)\ind_{\Omega_j^\circ}(\omega) - \bm\alpha_k(\omega)\ind_{\Omega_k^\circ}(\omega)] \Big\} \Big]\\
&=: v_2^m.
\end{aligned}
\]
We have proved $v_1^m = v_2^m$ in previous steps. And clearly we have $v_2^m \leq v_2$ since $c_m \leq c$, leading to $v_1^m = v_2^m \leq v_2 \leq v_1$, so we only need to show $v_1 = \sup_m v_1^m$. 

Observe that $\wass^m(P_k,Q_{k,n_k})$ is a non-decreasing sequence bounded above by $\wass(P_k,Q_{k,n_k})$. If $\{(P_{1,l}^m,P_{2,l}^m)\}_{l\in \mathbb N}$ is a minimizing sequence for the problem $v_1^m$, then we can extract a subsequence that converges weakly to some probability measure $P_1^m,P_2^m$ \cite{villani2003topics}. 

We claim that the sequence $\{P_k^m\}_{m\in \mathbb N}$ is relatively compact with respect to the weak topology, $k=1,2$. 
To show this, suppose $\{P_k^m\}_{m\in \mathbb N}$ is not relatively compact, then there exists $\epsilon>0$ such that for any compact set $A$ and any $m_0 \in \mathbb{N}$, there exists $m > m_0$ such that $P_k^m (A) \geq \epsilon$. 
We choose $m_0 = \lceil\wass(Q_{k,n_k},P_k^\circ)/\epsilon\rceil$ and a set $A$ such that $\inf_{\omega\in A, \homega \in \hOmega} c(\omega,\homega) \geq m_0$. 
Then for any $m>m_0$, we have 
\[
\begin{aligned}
\wass^m(Q_{k,n_k},P_k^m) &= \min_{\gamma\in\Gamma(P_k^m,Q_{k,n_k})} \left\{\E_{(\omega,\omega')\sim\gamma} \left[c_m(\omega,\omega')\right]  \right\}\\
&> m_0P_k^m (A) \geq m_0\epsilon \geq \wass(Q_{k,n_k},P_k^\circ),
\end{aligned}
\] 
while at the same time we have 
\[
\wass^m(Q_{k,n_k},P_1^m) \leq \wass^m(Q_{k,n_k},P_1^\circ) \leq \wass(Q_{k,n_k},P_1^\circ),
\]
which is a contradiction. Therefore $\{P_k^m\}_{m\in \mathbb N}$ is relatively compact and we can extract a subsequence that converges to some probability measure $P_k^\ast$.   

For any $m_1 > m_2$, we have $\wass^{m_1}(P_k^{m_1},Q_{k,n_k}) \geq \wass^{m_2}(P_k^{m_1},Q_{k,n_k})$, and 
\[
\limsup_{m_1\rightarrow\infty}\wass^{m_1}(P_k^{m_1},Q_{k,n_k}) \geq \limsup_{m_1\rightarrow\infty}\wass^{m_2}(P_k^{m_1},Q_{k,n_k}) \geq \wass^{m_2}(P_k^\ast,Q_{k,n_k}).
\]
Moreover, $\wass^{m_2}(P_k^\ast,Q_{k,n_k})$ is a non-decreasing sequence and converges to $\wass(P_k^\ast,Q_{k,n_k})$ as $m_2\rightarrow\infty$, hence:
\[
\limsup_{m\rightarrow\infty} v_1^m = \limsup_{m\rightarrow\infty} \wass^{m}(P_k^{m},Q_{k,n_k}) \geq \wass(P_k^\ast,Q_{k,n_k})=v_1.
\]
Thereby we complete the proof. 
\qed


\subsection{Proof of Theorem~\ref{thm:radius}} \label{proof:radius}

Our analysis starts from the observation that $$\profile_{n_1,n_2} \leq \sup_{\bm\alpha\in\cA} G_{n_1,n_2}(\bm\alpha).
$$
First recall that $\cA$ is defined in \eqref{eq:setA} as:
\[
\cA = \Big\{\bm\alpha=\bm\alpha_2 \ind_{\Omega_2^\circ} - \bm\alpha_1 \ind_{\Omega_1^\circ}:\;\bm\alpha_k\in\cB_+(\Omega_k^\circ)\cap\Lip(\Omega_k^\circ),\,\bm\alpha(\omega_k^\circ)=0,\,k=1,2 \Big\}. 
\]
We then provide an upper bound on $\sup_{\bm\alpha\in\cA} G_{n_1,n_2}(\bm\alpha)$ detailed as follows.
By definition of $\cA$, $\bm\alpha(\homega_1^i)\leq0$ for $\homega_1^i\in\Omega_1^\circ$ and $\bm\alpha(\homega_2^j)\geq0$ for $\homega_2^j\in\Omega_2^\circ$. Therefore, to maximize $G_{n_1,n_2}(\bm\alpha)$, we should let $\bm\alpha(\homega_1^i)=0$ for $\homega_1^i\in\Omega_1^\circ$ and $\bm\alpha(\homega_2^j)=0$ for $\homega_2^j\in\Omega_2^\circ$. 
%
In addition, since $ \bm\alpha_1,\bm\alpha_2$ are $1$-Lipschitz, we have $\bm\alpha(\homega_1^i) \leq \min_{j:\;\homega_2^j \in\Omega_2^\circ}c(\homega_1^i,\homega_2^j)$ for $\homega_1^i\in\Omega_2^\circ$ and $\bm\alpha(\homega_2^j) \geq -\min_{i:\;\homega_1^i \in\Omega_1^\circ}c(\homega_2^j,\homega_1^i)$ for $\homega_2^j\in\Omega_1^\circ$. 
Hence we have
\begin{equation}\label{eq:bound_dist}
\sup_{\bm\alpha\in\cA} G_{n_1,n_2}(\bm\alpha) = \frac{1}{n_1} \sum_{i:\;\homega_1^i\in\Omega_2^\circ}\min_{j:\;\homega_2^j \in\Omega_2^\circ}c(\homega_1^i,\homega_2^j) + \frac{1}{n_2} \sum_{j:\;\homega_2^j\in\Omega_1^\circ} \min_{i:\;\homega_1^i \in\Omega_1^\circ}c(\homega_2^j,\homega_1^i).    
\end{equation}
Note that the summation equals $0$ if there is no point $\homega_1^i$ falling into the set $\Omega_2^\circ$ and no point $\homega_2^j$ falling into the set $\Omega_1^\circ$. 

In light of above, in order to obtain the asymptotic upper bound on the profile function $\profile_{n_1,n_2}$, we study the right-hand side of the above inequality instead, which is relatively simpler since it only involves the minimum distance type statistics of two sample sets. 
We state the following lemma, whose proof is adapted from the asymptotic moments of near-neighbour distance distributions in \cite{penrose2011laws,wade2007explicit,penrose2003weak}.

\begin{lemma}\label{lem:min_dist_unbound}
Let $\{x_1,\ldots,x_n\}$ be a set of points selected independently at random from $\Omega\subset\R^d$ according to the sampling distribution $F$ with density function $f$. Let $x$ be a random variable sampled from the distribution $G$ with density function $g$. Suppose that $f(x)$, $g(x)$ are densities with
\[
\int_{\Omega} g(x)f(x)^{-1/d}dx < \infty,
\]
and for some $\epsilon>0$ we have $\sup_{n \in \mathbb{N}} \E_{x\sim f, x_1,\ldots,x_n\sim g}[(n^{1/d}\min_{1\leq i\leq n}  \left\Vert x - x_i\right\Vert)^{1+\epsilon}]<\infty$. 
Then
\[
n^{1/d}\E\{\min_{1\le i \le n} \left\Vert x-x_i\right\Vert\} \rightarrow \frac{\Gamma(1+1/d)}{V_d^{1/d}} \int_{\R^d} g(x) f(x)^{-1/d}dx,
\]
as $n\rightarrow \infty$, where $V_d = \pi^{d/2}/\Gamma(1+d/2)$ is the volume of the unit ball in $\R^d$. 
\end{lemma}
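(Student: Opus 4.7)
The strategy is a conditioning argument modeled on \cite[Lemma~3.2]{penrose2003weak}: condition on $x$, establish Weibull-type asymptotics for the scaled nearest-neighbor distance $n^{1/d}\min_i \|x-x_i\|$ at $g$-a.e.\ point, compute the limiting conditional mean, and then integrate over $x\sim g$, using the $(1+\epsilon)$-moment hypothesis to justify two interchanges of limit and expectation.

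First, I fix $x$ with $f(x)>0$ at which $f$ is approximately continuous (which holds at Lebesgue-a.e.\ point, hence $g$-a.e.\ point since $g$ is absolutely continuous). Since the $x_i$ are i.i.d.\ from $F$,
\[
\P\bigl(n^{1/d}\min_i\|x-x_i\|>t \,\big|\, x\bigr) = \bigl(1 - F(B(x,t/n^{1/d}))\bigr)^n,
\]
where $B(x,r)$ is the Euclidean ball of radius $r$. Using $F(B(x,r))/(V_d r^d)\to f(x)$ as $r\downarrow 0$, the right-hand side converges to $\exp(-V_d t^d f(x))$, so $n^{1/d}\min_i\|x-x_i\|$ converges in distribution (conditional on $x$) to a Weibull variable $W_x$. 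A direct tail-integral computation with the substitution $u=V_d t^d f(x)$ then yields
\[
\E[W_x] = \int_0^\infty \exp\bigl(-V_d t^d f(x)\bigr)\, dt = \frac{\Gamma(1+1/d)}{V_d^{1/d}}\, f(x)^{-1/d}.
\]

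Next, I promote this conditional weak convergence to convergence in the conditional mean. The uniform $(1+\epsilon)$-moment assumption gives uniform integrability of the family $\{n^{1/d}\min_i\|x-x_i\|\}_n$, so combining it with the Weibull limit yields
\[
n^{1/d}\,\E\bigl[\min_i\|x-x_i\|\,\big|\,x\bigr] \longrightarrow \frac{\Gamma(1+1/d)}{V_d^{1/d}}\, f(x)^{-1/d}
\]
for $g$-a.e.\ $x$. Finally, I integrate over $x\sim g$: the same $(1+\epsilon)$-moment hypothesis produces uniform integrability for the unconditional random variable $n^{1/d}\min_i\|x-x_i\|$, and the candidate limiting integrand $g(x)f(x)^{-1/d}$ is integrable by the first hypothesis, so dominated convergence (equivalently, Vitali's convergence theorem) justifies the exchange of the outer limit with the integral over $x$, producing the claimed formula.

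The main technical hurdle is justifying the two successive interchanges of limit and expectation: the $(1+\epsilon)$-moment bound is used precisely to supply the uniform integrability needed at both stages. Secondary care is needed for the Lebesgue-null set on which $f$ fails to be approximately continuous, and for points with $f(x)=0$; the former is automatically $g$-null by absolute continuity, and on the $g$-support the hypothesis $\int g\, f^{-1/d}\,dx<\infty$ forces $f>0$ $g$-a.e., so these exceptional points contribute nothing to either integral.
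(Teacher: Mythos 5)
Your overall strategy is the same as the paper's: condition on $x$, derive a conditional Weibull limit for $n^{1/d}\min_i\|x-x_i\|$, identify its mean as $\Gamma(1+1/d)V_d^{-1/d}f(x)^{-1/d}$, and pass to the unconditional expectation using the $(1+\epsilon)$-moment hypothesis. Your derivation of the conditional limit is more self-contained than the paper's --- you compute $(1-F(B(x,t\,n^{-1/d})))^n \to e^{-V_d t^d f(x)}$ directly from the Lebesgue density theorem rather than invoking the Poisson-process coupling of Penrose and Yukich --- and the Gamma-function computation of $\E[W_x]$ is correct.

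The weak point is the handling of the two interchanges, which you yourself flag as the technical hurdle. The hypothesis $\sup_n\E\bigl[(n^{1/d}\min_i\|x-x_i\|)^{1+\epsilon}\bigr]<\infty$ is an \emph{unconditional} bound, averaged over $x$. It does not provide, for $g$-a.e.\ fixed $x$, a uniform bound on the conditional moments $\E\bigl[(n^{1/d}\min_i\|x-x_i\|)^{1+\epsilon}\mid x\bigr]$, which is what your first interchange (conditional weak convergence $\Rightarrow$ conditional mean convergence pointwise in $x$) requires. Your second interchange similarly invokes dominated/Vitali convergence for the outer integral over $x$ without exhibiting a dominating function or a uniformly-integrable-in-$n$ family in $L^1(g)$. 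The clean fix is to perform a single unconditional interchange: from the conditional Weibull limit, deduce unconditional convergence in distribution $Y_n \Rightarrow Y$, where $Y\mid x \sim W_x$, by applying bounded convergence to $\E[h(Y_n)] = \int\E[h(Y_n)\mid x]\,g(x)\,dx$ for bounded continuous $h$ (no UI needed here, just $|h|\le\|h\|_\infty$); then the $(1+\epsilon)$-moment bound gives genuine uniform integrability of the unconditional sequence $\{Y_n\}$, so $\E[Y_n]\to\E[Y]=\int\E[W_x]\,g(x)\,dx$, which is finite by the hypothesis $\int g\,f^{-1/d}\,dx<\infty$. This uses each hypothesis exactly where it is available and closes the gap.
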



\begin{proof}[Proof of Lemma~\ref{lem:min_dist_unbound}]
The proof is based on a conditioning argument following \cite[Theorem~2.1]{penrose2003weak}, \cite[Theorem~2]{wade2007explicit}, and \cite[Theorem~2.3]{penrose2011laws}. Let $x$ be the random variable with density function $g$, and $\mathcal X_n=\{x_1,\ldots,x_n\}$ be a set of $n$ i.i.d. samples from the distribution $f$. Denote $\xi(x;\mathcal X_n)$ as the minimum distance from $x$ to the $n$ points within $\mathcal X_n$. 
For any fixed $x$, it has been shown in Lemma~3.2 of \cite{penrose2003weak} that the expectation of $n^{1/d}\{\min_{1\le i \le n} \left\Vert x-x_i\right\Vert\}$ converges to $\E[\xi_\infty(\mathcal P_{f(x)})]$, where $\mathcal P_{f(x)}$ is a homogeneous Poisson point process of intensity $f(x)$ (which is a constant when we fix $x$) on $\R^d$, $\xi_\infty(\mathcal P_{f(x)})$ is the limit of $\xi$ on $\mathcal P_{f(x)}$, see details in equation~(2.4) of \cite{penrose2003weak}, and here the expectation is taken with respect to the Poisson point process $\mathcal P_{f(x)}$. The expectation $\E[\xi_\infty(\mathcal P_{f(x)})]$ equals to $V_d^{-1/d}\Gamma(1+1/d)f(x)^{-1/d}$ as shown in equation~(15) of \cite{wade2007explicit}. 
If for random $x$, the function $\xi$ satisfies the moments condition $\sup_n \E[(n^{1/d}\xi(x;\mathcal X_n))^{1+\epsilon}]<\infty$ for some $\epsilon>0$, where the expectation is taken with respect to both the random variables $x$ and random samples in $\mathcal X_n$, then we can condition on the distribution of $x$ and thus as $n\rightarrow\infty$, $\lim_{n\rightarrow\infty}\E[n^{1/d}\xi(x;\mathcal X_n)]=\int_{\R^d}\E[\xi_\infty(\mathcal P_{f(x)})] g(x)dx$. By plug in the close-form value of $\E[\xi_\infty(\mathcal P_{f(x)})]$, we obtain the desired result.
\end{proof}

Therefore, under the assumption that $\lim_{n_1,n_2\to\infty} n_2/n_1= c>0$, in the asymptotic regime we roughly have $n_2 \sim c n_1$ for fixed constants $c>0$, and the expectation of the minimum distance is asymptotically of the order $\mathcal O(n_1^{-1/d})$ if we do not impose any further conditions for the data-generating distributions. 
%
Observe that in our case the first term on the right-hand side of \eqref{eq:bound_dist} is a variant of the minimum distance in Lemma \ref{lem:min_dist_unbound} in terms that we restrict our attention to points in the subset $\hOmega_2^\circ$. Therefore, by restricting the support of the integral we have that:
\[
(c n_1)^{1/d}\E_{\homega_1\sim P_1^\circ}[\ind_{\{\homega_1\in\Omega_2^\circ\}}\min_{j:\;\homega_2^j \in\Omega_2^\circ}c(\homega_1,\homega_2^j)] \rightarrow \frac{\Gamma(1+1/d)}{V_d^{1/d}} \int_{\Omega_2^\circ} \frac{f_{1}(x)}{[f_{2}(x)]^{1/d}}dx,  
\]
and the second term on the right-hand side of \eqref{eq:bound_dist} can be treated similarly:
\[
(n_1)^{1/d}\E_{\homega_2\sim P_2^\circ}[\ind_{\{\homega_2\in\Omega_1^\circ\}}\min_{i:\;\homega_1^i \in\Omega_1^\circ}c(\homega_2,\homega_1^i)] \rightarrow \frac{\Gamma(1+1/d)}{V_d^{1/d}} \int_{\Omega_1^\circ} \frac{f_{2}(x)}{[f_{1}(x)]^{1/d}}dx.
\]
Then we apply a refined law-of-large-numbers-type argument to show the desired results in the theorem. Here we observe the key challenge is that the sample average is taken for dependent random variables. In particular, for fixed sample points $\{\homega_2^1,\ldots,\homega_2^{n_2}\}$ and two i.i.d. observations $\homega_1$ and $\homega_1'$, the variables $\min_{j:\;\homega_2^j \in\Omega_2^\circ}c(\homega_1,\homega_2^j)$ and $\min_{j:\;\homega_2^j \in\Omega_2^\circ}c(\homega_1',\homega_2^j)$ are {\it dependent} since they rely on the common sample set $\{\homega_2^1,\ldots,\homega_2^{n_2}\}$. To address this issue, we can apply the coupling argument used in the proof to \cite[Theorem~2.1]{penrose2003weak}. More specifically, for fixed $\homega_1$ and $\homega_1'$, we can separate the space into two half-spaces: $F_{\homega_1}$ that contains all points closer to $\homega_1$ than to $\homega_1'$, and $F_{\homega_1'}$ that contains all points closer to $\homega_1'$ than to $\homega_1$. Given these two half-spaces, we can construct two independent homogeneous Poisson process of intensity $f_1(\homega_1)$ and $f_1(\homega_1')$, respectively. Then by the coupling argument, it was shown in \cite[Proposition~3.2]{penrose2003weak} that we have the weak law of large numbers in the sense that 
\begin{equation}\label{eq:LLN1}
\frac{1}{n_1} \sum_{i:\;\homega_1^i\in\Omega_2^\circ} (c n_1)^{1/d}\min_{j:\;\homega_2^j \in\Omega_2^\circ}c(\homega_1^i,\homega_2^j) \rightarrow \frac{\Gamma(1+1/d)}{V_d^{1/d}} \int_{\Omega_2^\circ} \frac{f_{1}(x)}{[f_{2}(x)]^{1/d}}dx, \quad \text{in $L^1$},
\end{equation}
and 
\begin{equation}\label{eq:LLN2}
\frac{1}{n_2} \sum_{j:\;\homega_2^j\in\Omega_1^\circ}( n_1)^{1/d} \min_{i:\;\homega_1^i \in\Omega_1^\circ}c(\homega_2^j,\homega_1^i) \rightarrow  \frac{\Gamma(1+1/d)}{V_d^{1/d}}  \int_{\Omega_1^\circ} \frac{f_{2}(x)}{[f_{1}(x)]^{1/d}}dx, \quad \text{in $L^1$}.
\end{equation}
Finally, note that for any sequences of random variable $X_n,Y_n$, if $X_n \rightarrow X$ in $L^1$ and $Y_n \rightarrow Y$ in $L^1$, then $X_n + Y_n \rightarrow X+ Y$ in $L^1$. 
By combining \eqref{eq:LLN1} and \eqref{eq:LLN2}, we prove the theorem.

\section{Auxiliary Results}

\subsection{Kantorovich duality}\label{sec:kan}
\begin{lemma}[Theorem 5.10, \cite{villani2008optimal}]\label{lem:kan}
Let $(\mathcal X, \mu)$ and $(\mathcal Y, \nu)$ be two Polish probability spaces and let $c: \mathcal X \times \mathcal Y \rightarrow \mathbb R \cup \{ +\infty\}$ be a lower semicontinuous cost function. Then we have the duality
\[
\min_{\gamma \in \Pi(\mu,\nu)} \int_{\mathcal X \times \mathcal Y} c(x,y)d\gamma(x,y) = \sup_{\substack{ (\phi,\psi)\in L^1(\mu) \times L^1(\nu) \\ \phi(x) + \psi(y) \leq c(x,y)\\ \forall x,y}} \left( \int_{\mathcal X} \phi(x)d\mu + \int_{\mathcal Y} \psi(y)d\nu \right),
\]
where $\gamma \in \Pi(\mu,\nu)$ denotes the joint distribution on $\mathcal X \times \mathcal Y$, with marginal distributions $\mu$ and $\nu$, respectively. 
\end{lemma}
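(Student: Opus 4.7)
The plan is to split the duality into weak and strong halves. Weak duality ($\geq$) is immediate: for any feasible pair $(\phi,\psi)$ with $\phi(x)+\psi(y)\leq c(x,y)$ and any $\gamma\in\Pi(\mu,\nu)$, integrating against $\gamma$ and using the marginal constraints gives
\[
\int_{\mathcal X\times\mathcal Y} c\,d\gamma \;\geq\; \int_{\mathcal X\times\mathcal Y} \bigl(\phi(x)+\psi(y)\bigr)\,d\gamma(x,y) \;=\; \int_{\mathcal X}\phi\,d\mu + \int_{\mathcal Y}\psi\,d\nu,
\]
so the primal infimum dominates the dual supremum. Existence of a primal minimizer (justifying the $\min$) follows from Prokhorov's theorem, since $\Pi(\mu,\nu)$ is tight whenever $\mu,\nu$ are, combined with lower semicontinuity of $\gamma\mapsto\int c\,d\gamma$ under weak convergence when $c$ is lower semicontinuous and bounded below.

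For the strong direction I would first reduce to the case of a \emph{bounded continuous} cost. When $c$ is lower semicontinuous and bounded below, approximate it from below by a sequence $c_k\in C_b(\mathcal X\times\mathcal Y)$ with $c_k\nearrow c$ (e.g.~via the standard Baire-class-one construction $c_k(x,y)=\inf_{x',y'}\{c(x',y')+k\,d(x,x')+k\,d(y,y')\}$). Monotone convergence handles the dual side, and lower semicontinuity together with the monotone convergence theorem handles the primal side, so the identity at level $k$ passes to the limit. A further truncation $c\mapsto \min(c,M)$ followed by $M\to\infty$ absorbs the case where $c$ takes the value $+\infty$.

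With $c$ bounded continuous on $\mathcal X\times\mathcal Y$, the heart of the argument is Fenchel--Rockafellar on $E=C_b(\mathcal X\times\mathcal Y)$ with the sup-norm. Define convex functionals $\Theta,\Xi:E\to\mathbb R\cup\{+\infty\}$ by
\[
\Theta(u) \;=\; \begin{cases} 0 & \text{if } u(x,y)\geq -c(x,y)\ \forall x,y \\ +\infty & \text{otherwise}\end{cases}, \quad
\Xi(u) \;=\; \begin{cases} \int\phi\,d\mu+\int\psi\,d\nu & \text{if } u(x,y)=\phi(x)+\psi(y) \\ +\infty & \text{otherwise}\end{cases}.
\]
Both are convex; $\Theta$ is continuous at the constant function $u\equiv\|c\|_\infty+1$, which lies in the interior of its domain, so the qualification hypothesis of Fenchel--Rockafellar holds. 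The theorem gives $\inf_{u\in E}\{\Theta(u)+\Xi(-u)\} = \max_{\ell\in E^*}\{-\Theta^*(-\ell)-\Xi^*(\ell)\}$. The left-hand side is precisely the negative of the dual supremum. On the right, a Riesz-representation computation shows $\Xi^*(\ell)$ is finite only when $\ell$ is represented by a signed Borel measure $\gamma$ with marginals $\mu,\nu$ (testing against split functions $\phi(x)+\psi(y)$ forces these marginal identities), in which case $\Xi^*(\ell)=0$; and $\Theta^*(-\ell)=\int c\,d\gamma$ when $\gamma\geq 0$ (and $+\infty$ otherwise, by polar-set arguments pairing $\Theta$ against bounded nonnegative test functions). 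Matching the two sides yields the sought equality.

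The main obstacle is twofold: ensuring the Riesz step genuinely forces $\gamma$ to be a \emph{nonnegative} measure with the correct marginals (this uses that $u\mapsto\Theta(u)$ penalizes only from one side, so $\Theta^*$ lives on the positive cone), and ensuring the approximation $c_k\nearrow c$ preserves \emph{both} sides of the identity — the dual side requires care because enlarging $c$ enlarges the feasible region for $(\phi,\psi)$ and the supremum might a priori jump, but monotone convergence together with the uniform marginal bounds rules this out. Once these two technical points are checked, the result follows.
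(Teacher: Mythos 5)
The paper does not prove this lemma at all: it is quoted directly as Theorem~5.10 of Villani's \emph{Optimal Transport: Old and New} and used as a black box (in the proof of Lemma~\ref{lemma:wass vs wass} and, in discretized form, in \eqref{eq:kan_discrete}). So the only meaningful comparison is between your reconstruction and the standard textbook proof — and your outline is indeed that standard proof: weak duality by integrating the constraint against a coupling, reduction to bounded continuous costs by monotone approximation, and Fenchel--Rockafellar on $C_b(\mathcal X\times\mathcal Y)$. The weak-duality and approximation steps are fine (your sandwich $D_k = T_k \le D \le T$ together with $T_k \to T$, proved by extracting a weak limit of minimizers $\gamma_k$ and applying monotone convergence in the truncation index, closes the limit passage correctly).

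There is one genuine gap in the Fenchel--Rockafellar step as you state it. On a \emph{non-compact} Polish space, $C_b(\mathcal X\times\mathcal Y)^*$ is not the space of signed Borel measures: it contains purely finitely additive functionals (e.g.\ Banach limits along sequences escaping to infinity), so ``a Riesz-representation computation shows $\Xi^*(\ell)$ is finite only when $\ell$ is represented by a signed Borel measure $\gamma$'' is false as written. Testing $\ell$ against split functions $\phi\oplus\psi$ only pins down its action on that subspace and does not by itself force countable additivity. You need an additional argument: either first prove the duality for compact $\mathcal X\times\mathcal Y$ and then extend to the Polish case by inner regularity/tightness of $\mu$ and $\nu$ (this is how Villani proceeds, and it is also structurally how the present paper handles non-compactness in Steps~3--4 of its proof of Lemma~\ref{lemma:profile_dual}), or show directly that a nonnegative $\ell\in C_b(\mathcal X\times\mathcal Y)^*$ whose marginals are tight Borel probability measures is itself tight and hence countably additive. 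A second, more minor point: as stated (and as stated in the paper) the lemma carries no lower bound on $c$, and without a hypothesis such as $c(x,y)\ge a(x)+b(y)$ with $a\in L^1(\mu)$, $b\in L^1(\nu)$ the dual integrals need not be well defined; your argument tacitly assumes $c$ bounded below, which is harmless here since the paper only applies the lemma with $c$ a metric.
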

Note that when $\mu$ (or $\nu$) is a discrete distribution on $\{x_1,\ldots,x_{m}\}$, then the function $\phi(x)$ can be viewed as a vector $\xi \in \mathbb R^m$. And the above dual formulation will be reduced to 
\begin{equation}\label{eq:kan_discrete}
\min_{\gamma \in \Pi(\mu,\nu)} \int_{\mathcal X \times \mathcal Y} c(x,y)d\gamma(x,y) = \sup_{\substack{ \xi \in \mathbb R^m, \psi\in L^1(\nu) \\ \xi_i + \psi(y) \leq c(x_i,y)\\ \forall 1\leq i \leq m,\forall y}} \left( \sum_{i=1}^m \xi_i \mu(x_i) + \int_{\mathcal Y} \psi(y)d\nu \right),
\end{equation}
this is what we have used in the proof of Lemma~\ref{lemma:wass vs wass}.

\subsection{Interchangeability principle}
Before introducing the principle, we recall the definition for decomposable spaces. Assume a probability space $(\Omega,\mathcal F, P)$. A linear space $\mathcal M$ of $\mathcal F$-measurable functions $\psi:\Omega \rightarrow \mathbb R^d$ is decomposable if for every $\psi \in \mathcal M$ and $A\in\mathcal F$, and every bounded $\mathcal F$-measurable function $\phi:\Omega \rightarrow \mathbb R^d$, the space $\mathcal M$ also contains the function $\eta(\cdot) = \ind_{\Omega\setminus A}(\cdot)\psi(\cdot) + \ind_{A}(\cdot)\phi(\cdot)$. 
\begin{lemma}[Theorem 7.80, \cite{shapiro2009lectures}]\label{lem:change}
Let $\mathcal M$ be a decomposable space and $f: \mathbb R^d \times \Omega \rightarrow \mathbb R$ be a random lower semicontinuous function. Then
\begin{equation}\label{eq:change}
\mathbb E\left[ \inf_{x\in \mathbb R^d} f(x,\omega) \right] = \inf _{\chi \in \mathcal M} \mathbb E[F_{\chi}],
\end{equation}
where $F_{\chi}:=f(\chi(\omega),\omega)$, provided that the right-hand side of \eqref{eq:change} is less than $+\infty$. Moreover, if the common value of the both sides in \eqref{eq:change} is not $-\infty$, then
\[
\tilde \chi \in \argmin_{\chi \in \mathcal M} \mathbb E[F_{\chi}] \text{  iff  } \tilde \chi(\omega) \in \argmin_{x \in \mathbb R^d} f(x,\omega) \text{ for a.e. $\omega\in\Omega$ and $\tilde \chi \in \mathcal M$}.
\]
\end{lemma}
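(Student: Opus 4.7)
The plan is to establish the equality \eqref{eq:change} by proving two inequalities, and then to use the equality case to characterize the minimizers.

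The easy direction $\mathbb E[\inf_x f(x,\omega)] \le \inf_{\chi\in\mathcal M}\mathbb E[F_\chi]$ is immediate: for each $\chi\in\mathcal M$ the pointwise bound $f(\chi(\omega),\omega)\ge \inf_{x\in\R^d} f(x,\omega)$ holds, and since both sides are $\mathcal F$-measurable (the right side is measurable because $f$ is a random lower semicontinuous function, so $\omega\mapsto \inf_x f(x,\omega)$ is a measurable normal integrand), one may integrate and take the infimum over $\chi$. The only subtlety here is ensuring that $\omega\mapsto\inf_x f(x,\omega)$ is integrable in a suitable sense, which I would handle by truncating $f$ from below (say replacing $f$ by $f\vee(-N)$) and passing to the limit via monotone convergence, using the standing assumption that the right-hand side is $<+\infty$ to prevent the integral from being $+\infty-\infty$.

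The hard direction $\inf_{\chi\in\mathcal M}\mathbb E[F_\chi]\le\mathbb E[\inf_x f(x,\omega)]$ requires a measurable selection argument. Fix $\varepsilon>0$ and consider the multifunction $\Gamma_\varepsilon(\omega):=\{x\in\R^d: f(x,\omega)\le \inf_y f(y,\omega)+\varepsilon\}$. Because $f(\cdot,\omega)$ is lower semicontinuous, $\Gamma_\varepsilon(\omega)$ is nonempty and closed for every $\omega$, and it is measurable as a multifunction because $f$ is a normal integrand. By a standard Kuratowski--Ryll-Nardzewski / Aumann measurable selection theorem, there is a measurable $\chi_\varepsilon:\Omega\to\R^d$ with $\chi_\varepsilon(\omega)\in\Gamma_\varepsilon(\omega)$ a.s. The main obstacle is then to verify that this $\chi_\varepsilon$ can be placed in $\mathcal M$. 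This is exactly where the decomposability of $\mathcal M$ is used: fix any bounded reference $\psi_0\in\mathcal M$, and for each $n$ let $A_n=\{\omega:\|\chi_\varepsilon(\omega)\|\le n\}$. On $A_n$ the selection is bounded, and decomposability lets us form $\eta_n=\ind_{A_n^c}\psi_0+\ind_{A_n}\chi_\varepsilon\in\mathcal M$; one checks that $\mathbb E[f(\eta_n,\omega)]\to\mathbb E[f(\chi_\varepsilon,\omega)]\le \mathbb E[\inf_x f(x,\omega)]+\varepsilon$ by dominated/monotone convergence after the same lower-truncation used above. Letting $n\to\infty$ and then $\varepsilon\to 0$ yields the desired inequality.

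For the second part, the equivalence of minimizers follows directly from the equality just proved. If $\tilde\chi\in\argmin_{\chi\in\mathcal M}\mathbb E[F_\chi]$, then
\[
\mathbb E\bigl[f(\tilde\chi(\omega),\omega)-\inf_x f(x,\omega)\bigr]=0,
\]
and since the integrand is nonnegative a.s.\ we conclude $f(\tilde\chi(\omega),\omega)=\inf_x f(x,\omega)$ a.s., i.e.\ $\tilde\chi(\omega)\in\argmin_x f(x,\omega)$ a.s. Conversely, if $\tilde\chi\in\mathcal M$ satisfies $\tilde\chi(\omega)\in\argmin_x f(x,\omega)$ a.s., then $\mathbb E[F_{\tilde\chi}]=\mathbb E[\inf_x f(x,\omega)]$ by direct substitution, which by \eqref{eq:change} equals $\inf_{\chi\in\mathcal M}\mathbb E[F_\chi]$, so $\tilde\chi$ is a minimizer. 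The key technical step to watch is the finiteness hypothesis on the right-hand side of \eqref{eq:change}, which underwrites every convergence argument and rules out pathological cases where one side is $+\infty$ and the other is not well-defined.
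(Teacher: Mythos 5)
The paper does not prove this lemma: it is quoted verbatim as Theorem~7.80 of the cited reference (Shapiro, Dentcheva, and Ruszczy\'nski) and used as a black box in the proof of Lemma~\ref{lemma:inf_phi}. Your argument is the standard textbook proof of the interchangeability principle, and its architecture --- pointwise bound for the easy inequality, measurable selection of $\varepsilon$-minimizers plus decomposability for the hard one, and the ``nonnegative integrand with zero integral'' argument for the argmin equivalence --- is sound. Two technicalities deserve tightening. First, your multifunction $\Gamma_\varepsilon(\omega)=\{x: f(x,\omega)\le \inf_y f(y,\omega)+\varepsilon\}$ is empty on the event $\{\omega:\inf_y f(y,\omega)=-\infty\}$, since $f$ is real-valued; the standard fix is to select from $\{x: f(x,\omega)\le \max(\inf_y f(y,\omega)+\varepsilon,\,-1/\varepsilon)\}$, which is nonempty and closed for every $\omega$ and still drives the integral down to $\mathbb E[\inf_x f(x,\omega)]$ as $\varepsilon\to 0$. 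Second, in the patching step $\eta_n=\ind_{A_n^c}\psi_0+\ind_{A_n}\chi_\varepsilon$ you need $\mathbb E[f(\psi_0(\omega),\omega)\,\ind_{A_n^c}(\omega)]\to 0$, which does not follow for an arbitrary reference $\psi_0\in\mathcal M$; you should take $\psi_0$ to be an element witnessing $\inf_{\chi\in\mathcal M}\mathbb E[F_{\chi}]<+\infty$ (if that infimum is $-\infty$ the hard inequality is vacuous anyway), and note that the definition of decomposability requires the \emph{patch} $\ind_{A_n}\chi_\varepsilon$ to be bounded --- which it is by construction --- not the reference $\psi_0$. With those repairs the proof is complete and matches the argument in the cited source.
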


\subsection{Wasserstein distributionally robust optimization}
The following result is a special case of Theorem 1 in \cite{gao2016distributionally} by choosing the Wasserstein metric of order $1$.
\begin{lemma}[Theorem 1, \cite{gao2016distributionally})]\label{lemma:DRO}
For $\nu = \frac1N\sum_{i=1}^N \delta_{\hat\xi^i}$ 
and $\theta>0$, we have:
\[
\sup_{\mu\in\calP(\nu,\theta)}\int_\Omega \Phi(\xi)d\mu(\xi) = \min_{\lambda\geq0}\left\{\lambda \theta -\frac1N \sum_{i=1}^N \inf_{\xi\in\Omega}[\lambda c (\hat\xi^i,\xi) - \Phi(\xi)]\right\},
\]
where $\calP(\nu,\theta)$ is the uncertainty set induced by Wasserstein metric \cite{gao2016distributionally} as defined in \eqref{problem:wass vs wass}:
\[
\calP(\nu,\theta) = \{\mu\in\calP(\Omega): \wass(\mu,\nu)\leq \theta\}.
\]

\end{lemma}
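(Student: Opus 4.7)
The plan is to exploit the discrete structure of the empirical measure $\nu$ to rewrite the primal as a tractable supremum over conditional distributions, and then apply Lagrangian duality on the single Wasserstein constraint. First I would use the fact that any coupling $\gamma$ with second marginal $\nu=\frac{1}{N}\sum_{i=1}^N \delta_{\hat\xi^i}$ admits the disintegration $\gamma = \frac{1}{N}\sum_{i=1}^N \mu_i \otimes \delta_{\hat\xi^i}$, where each $\mu_i \in \calP(\Omega)$ is the conditional distribution of the first coordinate given $\hat\xi^i$, and the first marginal is $\mu = \frac{1}{N}\sum_i \mu_i$. Under this change of variables,
\[
\wass(\mu,\nu) = \inf_{\mu_1,\ldots,\mu_N}\; \frac{1}{N}\sum_{i=1}^N \int_\Omega c(\xi,\hat\xi^i)\,d\mu_i(\xi),
\]
and hence the primal problem becomes
\[
\sup_{\mu_1,\ldots,\mu_N \in \calP(\Omega)}\; \frac{1}{N}\sum_{i=1}^N \int_\Omega \Phi\,d\mu_i \quad \text{subject to} \quad \frac{1}{N}\sum_{i=1}^N \int_\Omega c(\xi,\hat\xi^i)\,d\mu_i(\xi) \le \theta.
\]

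Next I would introduce a single nonnegative Lagrange multiplier $\lambda$ for the Wasserstein constraint and form the Lagrangian
\[
L(\{\mu_i\},\lambda) = \lambda\theta + \frac{1}{N}\sum_{i=1}^N \int_\Omega \bigl[\Phi(\xi) - \lambda c(\xi,\hat\xi^i)\bigr]\,d\mu_i(\xi).
\]
The weak-duality inequality is immediate. To get equality, I would swap $\sup_{\{\mu_i\}}$ and $\inf_{\lambda\ge 0}$; after swapping, the inner supremum decouples across $i$ and reduces, by the interchangeability principle (Lemma~\ref{lem:change} applied to the linear functional in $\mu_i$), to a pointwise supremum, yielding
\[
\inf_{\lambda \ge 0}\Bigg\{\lambda\theta + \frac{1}{N}\sum_{i=1}^N \sup_{\xi\in\Omega}\bigl[\Phi(\xi) - \lambda c(\xi,\hat\xi^i)\bigr]\Bigg\} = \inf_{\lambda\ge 0}\Bigg\{\lambda\theta - \frac{1}{N}\sum_{i=1}^N \inf_{\xi\in\Omega}\bigl[\lambda c(\xi,\hat\xi^i) - \Phi(\xi)\bigr]\Bigg\},
\]
which is exactly the claimed dual expression.

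The main obstacle is justifying the swap of $\sup$ and $\inf$. One route is a Slater-type argument: as long as the empirical measure $\nu$ itself is feasible (it attains $\wass(\nu,\nu)=0<\theta$), standard convex duality in the space of signed measures (the primal is linear in $\mu$ and the constraint is convex) gives strong duality provided the dual value is finite. A cleaner self-contained route, and the one I would prefer, is Kantorovich duality (Lemma~\ref{lem:kan}): write the Wasserstein constraint via its dual representation with Kantorovich potentials $(\phi,\psi)$, take the Lagrangian with $\lambda \ge 0$, and rescale $\lambda\phi \to \phi$, $\lambda\psi \to \psi$. After this rescaling the tight coupling $\phi(\xi) + \psi(\hat\xi^i) \le \lambda c(\xi,\hat\xi^i)$ allows us to eliminate $\phi$ optimally by $\phi(\xi) = \Phi(\xi)$ and choose $\psi(\hat\xi^i) = \inf_{\xi}[\lambda c(\xi,\hat\xi^i) - \Phi(\xi)]$, matching the target formula without needing a separate minimax argument.

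A secondary technical point is measurability and integrability: one needs $\xi \mapsto \inf_\xi[\lambda c(\xi,\hat\xi^i) - \Phi(\xi)]$ to be well-defined, which holds for instance when $\Phi$ is upper semicontinuous and $\lambda c(\cdot,\hat\xi^i) - \Phi(\cdot)$ is bounded below (these are standard regularity assumptions in \cite{gao2016distributionally}); under these one also has that the infimum in $\lambda$ on the RHS is attained, and the problem reduces to a one-dimensional convex program. Since this lemma is quoted from \cite{gao2016distributionally} as the $p=1$ special case of their Theorem 1, I would at this point simply verify the specialization matches our setup and cite that result for the regularity details, rather than redoing the full measure-theoretic machinery.
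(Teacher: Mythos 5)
Your proposal is correct, but note that the paper does not prove this statement at all: Lemma~\ref{lemma:DRO} is quoted verbatim as the order-$1$ special case of Theorem~1 in \cite{gao2016distributionally}, and the appendix supplies only the citation. So the right comparison is with the cited source and with the paper's own proof of the harder two-set analogue, Lemma~\ref{lemma:wass vs wass}. Your first route (disintegrate the coupling as $\gamma=\frac1N\sum_i\mu_i\otimes\delta_{\hat\xi^i}$, Lagrangianize the single scalar transport-cost constraint, and invoke Slater feasibility of $\nu$ itself since $\theta>0$) is exactly the argument in \cite{gao2016distributionally}; the reduction $\sup_{\mu_i\in\scrP(\Omega)}\int f\,d\mu_i=\sup_{\xi\in\Omega}f(\xi)$ is legitimate because Dirac masses are admissible, and the one-constraint structure lets strong duality be established by an elementary supporting-hyperplane argument in $\R^2$ rather than full infinite-dimensional machinery. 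Your second route --- embedding the Kantorovich dual of $\wass(\mu,\nu)$ into the Lagrangian, rescaling $\lambda\phi\to\phi$, $\lambda\psi\to\psi$, and then eliminating the potentials --- is precisely the technique the paper uses for Lemma~\ref{lemma:wass vs wass} via Lemma~\ref{lem:kan} (including the same change of variables that remains valid at $\lambda=0$), so it is the more consonant choice stylistically; the only step you gloss over is that setting $\phi=\Phi$ and $\psi(\hat\xi^i)=\inf_{\xi}\{\lambda c(\xi,\hat\xi^i)-\Phi(\xi)\}$ is optimal, which follows from the monotonicity of the objective in $\psi$ exactly as the paper argues when it replaces $v_k$ by $\min_i\{\lambda_k c(\omega,\homega_k^i)-u_k^i\}$. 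Your closing decision to verify the specialization and cite \cite{gao2016distributionally} for the regularity details is in fact what the paper does, and is the appropriate resolution here.
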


\end{document}